\newtheorem{theorem}{Theorem}
\newtheorem{lemma}{Lemma}
\newenvironment{comment}{\setbox0=\vbox\bgroup}{\egroup}
\newcommand{\Po}[1]{\tilde{#1}}             
\newcommand\Me[1]{ {#1^*} }     
\def\R{ {\mathbb{R}}}
\newcommand\ceil[1]{ \lceil {#1} \rceil}
\newcommand\floor[1]{ \lfloor {#1} \rfloor}
\newcommand\dee[1]{ {\;\mathrm{d}#1}}
\def\lintersect{\cap}
\def\Z{ {\mathbb{Z}}}
\def\parsec{\par\noindent}
\def\E{ {\mathbb{E}}}
\def\Var{ {\mathrm{Var}}}
\def\Normal{ {\mathcal{N}}}
\def\logicor{ {~\lor~}}
\def\P{ {\mathcal{P}}}
\def\Co{\mathcal{C}}
\def\Arg{\mathrm{Arg}}
\def\P{ {\frak{P}}}
\def\Bernoulli{ {\mathrm{Bernoulli}}}
\newcommand\spreadout[1]{ #1 }
\newcommand\fracpart[1]{ \{ #1 \} }
\newcommand\affilskip[1]{ #1 }
\newtheorem{example}{Example}
\begin{document}
\title{Asymmetric R\'enyi Problem}
\author
       {\spreadout{M. DRMOTA}{$^1$}%
\thanks{Research partially supported by the Austrian Science Foundation
FWF Grant~No. F50-02.}
and \spreadout{A. MAGNER}{$^2$}%
\thanks{Research supported by 
NSF Center for Science of Information (CSoI) Grant CCF-0939370.}
and \spreadout{W. SZPANKOWSKI}{$^§$}%
\thanks{Research partially supported by the
NSF Center for Science of Information (CSoI) Grant CCF-0939370, 
and in addition by the NSF Grants CCF-1524312, and  
NIH Grant 1U01CA198941-01.}  \\
\affilskip {$^1$} Institute for Discrete Mathematics and Geometry, TU Wien, Vienna, Austria, \\
\affilskip  A-1040 Wien, Wiedner Hauptstr. 8--10 \\
{$^2$} Coordinated Science Lab,  UIUC,  Champaign, \\ 
\affilskip  IL 61820, USA\\
{$^3$} Department of Computer Science, Purdue University, \\
\affilskip IN 47907, USA \\
michael.drmota@tuwien.ac.at, anmagner@illinois.edu, and szpan@purdue.edu}



\label{firstpage}
\maketitle

\begin{abstract}
In 1960 R\'enyi in his Michigan State University lectures asked for the number 
of random queries necessary to recover a hidden bijective labeling of  
$n$ distinct objects. In each query one selects a random subset of labels and asks, 
which objects have these labels?
We consider here an asymmetric version of
the problem in which in every query an object is chosen with probability 
$p > 1/2$ and we ignore ``inconclusive'' queries.
We study the number of queries needed to recover the labeling in 
its entirety ($H_n$), 
before at least one element is recovered ($F_n$),
and to recover a randomly chosen element $(D_n)$.
This problem exhibits several remarkable behaviors: 
$D_n$ converges in probability but not almost surely;
$H_n$ and 
$F_n$
exhibit phase transitions with respect
to $p$ in the second term. 
We prove that for $p>1/2$ with high probability (whp) we need
$
H_n=\log_{1/p} n +\frac 12 \log_{p/(1-p)}\log n +o(\log \log  n)
$
queries to recover the entire bijection. This should be compared to
its symmetric ($p=1/2$) counterpart established by Pittel and Rubin,
who proved that in this case one requires
$
H_n=\log_{2} n +\sqrt{2 \log_{2} n} +o(\sqrt{\log n})
$
queries. 
As a bonus, our analysis implies novel results for random PATRICIA tries, as 
the problem is probabilistically equivalent to
that of the height, fillup level, and typical depth  
of a PATRICIA trie built from $n$ independent
binary sequences generated by a biased($p$) memoryless source.
\end{abstract}

\section{Introduction}

In his lectures in the summer of 1960 at Michigan State University,
Alfred R\'enyi discussed several problems related to random sets \cite{renyi}.
Among them there was a problem regarding recovering a labeling of a set $X$ of $n$
distinct objects by asking random subset questions of the form ``which objects correspond to the labels in the (random) set $B$?''
For a given method of randomly selecting queries, R\'enyi's original problem asks for the typical behavior of the number of queries
necessary to recover the hidden labeling.

Formally, the unknown labeling of the set $X$ is a bijection $\phi$ from $X$ to a set $A$ of labels 
(necessarily with equal cardinality $n$), and a query takes the form of a subset $B \subseteq A$.  
The response to a query $B$ is $\phi^{-1}(B) \subseteq X$.  

Our contribution in this paper is a precise analysis of several parameters of R\'enyi's problem for a particular natural probabilistic model
on the query sequence.  In order to formulate this model precisely, it is convenient to first state a view of the process
that elucidates its tree-like structure.  In particular, 
a sequence of queries corresponds to a refinement of partitions of the set of objects, where two objects are in
different partition elements if they have been distinguished by some sequence of queries.
More precisely, the refinement works as follows: 
before any questions are asked, we have a trivial partition $\P_0 = X$ consisting of a single class (all objects).  Inductively,
if $\P_{j-1}$ corresponds to the partition induced by the first $j-1$ queries, then $\P_j$ is constructed from $\P_{j-1}$ by splitting
each element of $\P_{j-1}$ into at most two disjoint subsets: those objects that are contained in the preimage of the $j$th query set $B_j$ and
those that are not.  The hidden labeling is recovered precisely when 
the partition of $X$ consists only of singleton elements.  An instance of 
this process may be viewed as a rooted binary tree (which we call the 
\emph{partition refinement tree}) in which the $j$th level, for $j \geq 0$, corresponds
to the partition resulting from $j$ queries; a node in a given level corresponds to an element of the
partition associated with that level.  A right child corresponds to a subset of a parent partition element that is included
in the subsequent query, and a left child corresponds to a subset that is not included.  
See Example~\ref{QuerySequenceExample} for an illustration. 
\begin{example}[Demonstration of partition refinement]
	\label{QuerySequenceExample}
    Consider an instance of the problem where $X = [5] = \{1, ..., 5\}$, 
    with labels $(d, e, a, c, b)$ respectively (so $A = \{a, b, c, d, e\}$).
    Consider the following sequence of queries:
	    \tikzstyle{level 1}=[level distance=1.0cm, sibling distance=2.5cm]
	    \tikzstyle{level 2}=[level distance=1.0cm, sibling distance=1.5cm]
	    \tikzstyle{bag} = [rectangle, minimum width=3pt,inner sep=0pt]
	    \tikzstyle{end} = [circle, minimum width=3pt,fill, inner sep=0pt]
		
    \begin{minipage}{0.5\textwidth}
    \begin{enumerate}
    	\item
        	$B_1 = \{b, d\} \mapsto \{1, 5\}$
        \item
        	$B_2 = \{a, b, d\} \mapsto \{1, 3, 5\}$,
        \item
        	$B_3 = \{a, c, d\} \mapsto \{1, 3, 4\}$,
    \end{enumerate}
    \end{minipage}
    \vspace{20pt}
    \begin{minipage}{0.5\textwidth}
	    \begin{tikzpicture}[grow=down]
	    \node[bag]{\{1, 2, 3, 4, 5\}}
	    	child{
	            node[bag]{\{2,3,4\}}
	        	child{ 
	            	node[bag]{\{2,4\}}
                    child{ 
                        node[rectangle,draw]{2}
                    }
                    child{ 
                        node[rectangle,draw]{4}
                    }
	            }
	            child{ 
	            	node[rectangle,draw]{3}
	            }
	        }
	        child{
	            node[bag]{\{1,5\}}
                child{
	                node[bag,right]{\{1, 5\}}
			        	child{ 
			            	node[rectangle,draw]{5}
			                edge from parent
			                node[above]{}
			            }
			            child{ 
			            	node[rectangle,draw]{1}
			                edge from parent
			                node[above]{}
			            }
                }
	        };
	\end{tikzpicture}
	\label{QuerySequenceExampleDiagram}
    \end{minipage}
    Each level $j\geq 0$ of the tree depicts the partition $\P_j$, where a right child node 
    corresponds to the subset of objects in the parent set which are contained in the 
    response to the $j$th query.  Singletons are only explicitly depicted in the first
    level in which they appear.  We can determine the labels of all objects using the tree
    and the sequence of queries: for example, to determine the label of the object $3$, we
    traverse the tree until we reach the leaf corresponding to $3$.  This indicates that
    the label corresponding to $3$ is in the singleton set
    \[
        \neg B_1 \lintersect B_2 = \{a,c,e\} \lintersect \{a, b, d\} = \{a\}.
    \]
    Note that leaves of the tree always correspond to singleton sets.
\end{example}
In this work we consider a version of
the problem in which, in every query, each label is included independently
with probability $p > 1/2$ (the \emph{asymmetric case}) and we \emph{ignore inconclusive queries}.  In particular, if a candidate query fails to non trivially split some element 
of the previous partition, we modify the query by deciding again independently 
whether or not to include each label of that partition element with probability $p$.  
We perform this modification until
the resulting query splits every element of the previous partition non trivially.  
See Example~\ref{QuerySequenceIgnoreExample}.
\begin{example}[Ignoring inconclusive queries]
	\label{QuerySequenceIgnoreExample}
    Continuing Example~\ref{QuerySequenceExample}, the query $B_2$ fails to split the 
    partition element $\{1, 5\}$, so it is an example of an inconclusive query and would 
    be modified in our model to, say, $B_2' = \phi(\{1,3\})$.
    The resulting refinement of partitions is depicted as a tree here.  Note that the tree
    now does not contain non-branching paths and that $B_2$ is ignored in the final query
    sequence.
    
    \begin{minipage}{0.5\textwidth}
    \begin{enumerate}
    	\item
        	$B_1 = \{b, d\} \mapsto  \{1,5\}$
        \item
        	$B_2' = \{a,d\} \mapsto \{1,3\}$
        \item
        	$B_3 = \{a, c, d\} \mapsto \{1, 3, 4\}$.
    \end{enumerate}
    \end{minipage}
    \begin{minipage}{0.5\textwidth}
    	\vspace{12pt}
 	    \tikzstyle{level 1}=[level distance=1.0cm, sibling distance=2.5cm]
	    \tikzstyle{level 2}=[level distance=1.0cm, sibling distance=1.5cm]
	    \tikzstyle{bag} = [rectangle, minimum width=3pt,inner sep=0pt]
	    \tikzstyle{end} = [circle, minimum width=3pt,fill, inner sep=0pt]
 	    \begin{tikzpicture}[grow=down]
	    \node[bag]{\{1, 2, 3, 4, 5\}}
	    	child{
	            node[bag]{\{2,3,4\}}
	        	child{ 
	            	node[bag]{\{2,4\}}
                    child{ 
                    	node[rectangle,draw]{2}
                    }
                    child{ 
                    	node[rectangle,draw]{4}
                    }
	            }
	            child{ 
	            	node[rectangle,draw]{3}
	            }
	        }
	        child{
	            node[bag]{\{1,5\}}
	        	child{ 
	            	node[rectangle,draw]{5}
	                edge from parent
	                node[above]{}
	            }
	            child{ 
	            	node[rectangle,draw]{1}
	                edge from parent
	                node[above]{}
	            }
	        };
	\end{tikzpicture}   
    \end{minipage}
\end{example}
We study three parameters of this random process: $H_n$, the number of such queries
needed to recover the entire labeling; $F_n$, the number needed before at
least one element is recovered; 
and $D_n$, the number needed to recover an element selected uniformly
at random.  Our objective is to present precise probabilistic estimates 
of these parameters. 

The symmetric version (i.e., $p=1/2$) of the problem (with a variation) was discussed by Pittel 
and Rubin in \cite{pittelrubin1990}, where they analyzed the typical value of $H_n$.  
In their model, a query is constructed by deciding
whether or not to include each label from $A$ independently with probability $p=1/2$.
To make the problem more interesting, they added a constraint similar to ours: namely, a
query is, as in our model, admissible if and only if it splits every nontrivial element of the current
partition.  In contrast with our model, however, Pittel and Rubin completely discard inconclusive queries (rather than modifying their inconclusive subsets as we do).  
Despite this difference, the model considered in \cite{pittelrubin1990} is
probabilistically equivalent to ours for the symmetric case.  Our primary contribution
is the analysis of the problem in the asymmetric case ($p > 1/2$), but our methods
of proof allow us to recover the results of Pittel and Rubin. 

The question asked by R\'enyi brings some surprises.  For the
symmetric model ($p=1/2$) Pittel and Rubin \cite{pittelrubin1990} were
able to prove that the number of necessary queries is with high probability
(whp) (see Theorem~\ref{HeightTheorem}) 
\begin{eqnarray}
	\label{e1}
	H_n = \log_{2} n +\sqrt{2\log_{2} n} +o(\sqrt{\log n}).
\end{eqnarray}
In this paper, we develop a different method that could be used to re-establish this result
{and} prove that for $p > 1/2$ the number
of queries grows whp as
\begin{eqnarray}
	\label{e2}
    H_n = \log_{1/p} n +\frac{1}{2}\log_{p/q} \log n  +o(\log \log  n),
\end{eqnarray}
where $q:=1-p$.
Note a phase transition in the second term.  Moreover, this result is perhaps interesting in the sense that, for $p > 1/2$, $H_n$ exhibits the second-order
behavior that Pittel and Rubin stated that they fully expected but did not find in the $p=1/2$ case \cite{pittelrubin1990}.
We show that another  phase transition, also in the second term, occurs in the asymptotics for $F_n$
(see Theorem~\ref{FillupTheorem}):
\begin{eqnarray} 
\label{e3}
    F_n = \left\{ \begin{array}{ll}
           \log_{1/q} n - \log_{1/q}\log\log n + o(\log\log\log n)     &      p > q \\
        \log_{2} n - \log_2\log n + o(\log\log n)                   &      p = q = 1/2.
    \end{array}   \right. 
\end{eqnarray}
We also state in Theorem~\ref{DepthCorollary} some interesting probabilistic behaviors
of $D_n$.
We have $D_n/\log n  \to 1/h(p)$ (in probability)
where $h(p) := -p\log p - q\log q$, but we do not have almost sure
convergence.  

We establish these results in a novel way by considering first the \emph{external profile}
$B_{n,k}$, whose analysis was, until recently, an open problem of its own (the second 
and third authors gave a precise analysis of the external profile in an important range 
of parameters in \cite{magnerPhD2015,magnerspa2015}, but the present paper requires 
really nontrivial extensions).  
The external profile at level $k$ is
the number of bijection elements revealed by the $k$th query (one may also define the \emph{internal} profile
at level $k$ as the number of non-singleton elements of the partition immediately after the $k$th query).  Its study is motivated by
the fact that many other parameters, including all of those that we mention here, can
be written in terms of it. Indeed, $\Pr[D_n=k]=\E[B_{n,k}]/n$, 
$H_n = \max\{k : ~ B_{n,k} > 0\}$, and $F_n = \min\{k : ~ B_{n,k} > 0\} - 1$.

We now discuss our new results concerning the probabilistic  
behavior of the external profile.  
We establish in \cite{magnerspa2015,magnerPhD2015} precise asymptotic expressions for the 	
expected value and variance of $B_{n,k}$ in the \emph{central range}, that is, 
with $k \sim \alpha\log n$, where, for any fixed $\epsilon > 0$,
$\alpha \in (1/\log(1/q) + \epsilon, 1/\log(1/p) - \epsilon)$
(the left and right endpoints of this interval as $\epsilon \to 0$ are associated
with $F_n$ and $H_n$, respectively).  Specifically,
it was shown that both the mean and the variance
are of the same (explicit) polynomial order of growth
(with respect to $n$). 
More precisely, expected value and variance grow
for $k\sim\alpha \log n$ as
$$
H(\rho(\alpha), \log_{p/q}(p^kn)) ~  \frac{n^{\beta(\alpha)}}{\sqrt{C \log n}}
$$
where $\beta(\alpha)\leq 1$ and $\rho(\alpha)$ are 
complicated functions of $\alpha$, 
$C$ is an explicit constant, and $H(\rho, x)$ is
a function that is periodic in $x$. 
The oscillations come from infinitely many
regularly spaced saddle points that we observe when inverting the Mellin
transform of the Poisson generating function of $\E[B_{n,k}]$.  
Finally, in \cite{magnerspa2015} we prove a central limit theorem;
that is,
${(B_{n,k} - \E[B_{n,k}])}/{\sqrt{\Var[B_{n,k}]}} \to \Normal(0,1) $
where $\Normal(0,1)$ represents the standard normal distribution.  

In order to establish the most interesting results claimed in the present paper
for $H_n$ and $F_n$, the analysis sketched above does not suffice: we need to estimate the
mean and the variance of the external profile \emph{beyond} the range
$\alpha \in (1/\log(1/q) + \epsilon, 1/\log(1/p) - \epsilon)$; in particular,
for $F_n$ and $H_n$ we need expansions at the left and right side (as $\epsilon \to 0$), respectively, of this range.

Having described most of our main results, we mention an important equivalence
pointed out by Pittel and Rubin \cite{pittelrubin1990}.
They observed that their version of the R\'enyi process
resembles the construction of a digital tree
known as a PATRICIA trie\footnote{We recall that a trie is a binary digital tree, where
data that are represented by binary strings are stored at leaves of the tree according
to finite prefixes of the corresponding binary strings in a minimal way such that all
appearing prefixes are different. A PATRICIA trie is a trie in
which non-branching paths are \emph{compressed}; that is, there are no unary paths.}
\cite{knuth1998acp,szpa2001Book}. In fact,
the authors of \cite{pittelrubin1990} show that $H_n$ is probabilistically
equivalent to the height (longest path) of a PATRICIA trie built from
$n$ binary strings generated independently by a memoryless source with bias $p=1/2$
(that is, with a ``1'' generated with probability $p$; this is often called
the \emph{Bernoulli model with bias $p$}); the equivalence is true more generally, for $p \geq 1/2$.
It is easy to see that
$F_n$ is equivalent to the fillup level (depth of the deepest full level),
$D_n$ to the typical depth (depth of a randomly chosen leaf), and $B_{n,k}$
to the external profile of the tree (the number of leaves at level $k$; the internal
profile at level $k$ is similarly defined as the number of non-leaf nodes at that 
level).  We spell out this equivalence in the following simple claim.  
\begin{lemma}[Equivalence of the R\'enyi problem with those of PATRICIA tries]
	\label{EquivalenceLemma}
    Any parameter (in particular, $H_n, F_n, D_n$, and $B_{n,k}$) of the R\'enyi process with bias $p$ 
    that is a function of the partition refinement
    tree is equal in distribution to the same function of a random PATRICIA trie generated
    by $n$ independent infinite binary strings from a memoryless source with bias $p \geq 1/2$.
\end{lemma}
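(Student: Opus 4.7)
The plan is to prove that the R\'enyi partition refinement tree and the PATRICIA trie built on $n$ iid bias-$p$ infinite binary strings are equal in distribution as random rooted binary trees; since $H_n$, $F_n$, $D_n$, and $B_{n,k}$ are all determined by this tree, the claim follows immediately. I would proceed by induction on $n$, using a matching recursive description of the two models in terms of the size of the split at the root and the subtrees hanging off of it.

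For the root split, in the PATRICIA model let $X_\ell$ be the number of strings whose $\ell$-th bit equals $1$; by independence across strings and bit positions, the $X_\ell$ are iid with $X_\ell\sim\mathrm{Binomial}(n,p)$. Path compression removes every level $\ell$ with $X_\ell\in\{0,n\}$, so the size of (say) the right child of the root is $X_T$, where $T=\min\{\ell:X_\ell\in\{1,\dots,n-1\}\}$. A short calculation using the independence of the $X_\ell$ shows that $X_T$ is distributed as $\mathrm{Binomial}(n,p)$ conditioned on lying in $\{1,\dots,n-1\}$. In the R\'enyi model each label is independently placed in $B_1$ with probability $p$, and the re-sampling rule keeps discarding trivial splits of the current partition element; because each resample is independent and uses the same $\mathrm{Bernoulli}(p)$ rule per label, the size of the right piece of the root split is again distributed as $\mathrm{Binomial}(n,p)$ conditioned on $\{1,\dots,n-1\}$. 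The two root-split distributions thus coincide.

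It remains to verify that, conditionally on the root split into pieces of sizes $M$ and $n-M$, the two subtrees are independent and distributed as the corresponding trees on $M$ and $n-M$ items. In PATRICIA this is immediate from independence across strings and across bit positions: after the first successful level, the remaining bits of the strings in each child are iid $\mathrm{Bernoulli}(p)$ and the two children share no data, so each subtree is an independent PATRICIA trie on its strings. In the R\'enyi process, the per-partition-element re-sampling is local, subsequent queries treat labels in distinct partition elements independently, and each label's inclusion in every subsequent query is iid $\mathrm{Bernoulli}(p)$; hence conditional on the first split, the subtrees on the two children are independent R\'enyi partition refinement trees on $M$ and $n-M$ labels. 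The base cases $n\le 1$ are trivial, and the inductive step combines these two facts to yield equality in distribution of the entire trees and hence of every measurable functional of them. The only slightly delicate point, and the one I expect to require the most care, is the justification that in PATRICIA the root-split size really is $\mathrm{Binomial}(n,p)$ conditioned on $\{1,\dots,n-1\}$; this rests on the memoryless-source assumption, which allows the bits across successive levels to be treated as independent resamples, exactly as in the R\'enyi model. Once that observation is in place, the two recursions collapse to the same one and the induction finishes the proof.
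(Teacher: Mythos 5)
Your proof is correct, and it takes a genuinely different route from the paper's. The paper establishes the equivalence by a \emph{coupling}: it constructs the PATRICIA trie and the R\'enyi partition refinement tree on a single probability space from the same $n$ iid bias-$p$ strings (string $S_j$ supplies the inclusion decisions for label $j$, one bit per demanded decision), and observes that the two trees are then \emph{always isomorphic} as rooted plane trees; equality in distribution of any tree functional follows at once. You instead prove equality in distribution by a \emph{recursive distributional} argument: both random trees satisfy the same recursion, namely a root split whose right-piece size is $\mathrm{Binomial}(n,p)$ conditioned on $\{1,\dots,n-1\}$, followed, conditionally on the split sizes $(M, n-M)$, by two independent subtrees of the same model on $M$ and $n-M$ items. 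Your computation that the stopped value $X_T$ has the conditioned binomial law, and your verification that the per-element re-sampling in the R\'enyi model and the iid-across-positions property of the memoryless source both yield the required conditional independence, are exactly the facts needed, and your induction closes the argument.

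The trade-off between the two approaches is worth noting. The paper's coupling is shorter and stronger: it avoids the independence/conditioning bookkeeping entirely and gives pathwise identity, not just equality in distribution, under the stated coupling. Your recursive argument is more elementary and self-contained (no explicit coupling needs to be exhibited), and it makes visible the common binomial recursion that underlies both models --- which is, incidentally, the recursion (\ref{muRecurrence}) that the rest of the paper solves. Either route proves the lemma; the paper's is the quicker path to the conclusion, while yours exposes more of the shared structure.
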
    
\begin{proof}
    In a nutshell, we
    couple a random PATRICIA trie and the sequence of queries from the R\'enyi process
    by constructing both from the same sequence of binary strings from a memoryless source.
    We do this in such a way that the resulting PATRICIA trie and the partition refinement
    tree are isomorphic with probability $1$ (in fact, always isomorphic), so that parameters defined in terms of
    either tree structure are equal in distribution. 
    
    More precisely, we start with $n$ independent infinite binary strings $S_1, \ldots, S_n$ 
    generated according to a memoryless source with bias $p$, where each string corresponds, in a way to
    be made precise below, to a
    unique element of the set of labels (for simplicity, we assume that $A = [n]$, and
    $S_j$ is associated to the object $j$, for $j \in [n]$; intuitively, $S_j$ encodes the decision, for each
    query, of whether or not to include $j$).  These induce a PATRICIA trie $T$,
    and our goal is to show that we can simulate a R\'enyi process using these strings,
    such that the corresponding tree $T_R$ is isomorphic to $T$ as a rooted plane--
    oriented tree (see Example~\ref{QuerySequenceIgnoreExample}).  The basic idea is as follows: 
    we maintain for each string $S_j$ an
    index $k_j$, initially set to $1$.  Whenever the R\'enyi process demands
    that we make a decision about whether or not to include label $j$ in a query,
    we include it if and only if $S_{j,k_j} = 1$, and then increment $k_j$ by $1$.
    
    Clearly, this scheme induces the correct distribution on queries.  Furthermore,
    the resulting partition refinement tree (ignoring inconclusive queries) is easily 
    seen to be isomorphic to $T$.
    Since the trees are isomorphic, the parameters of interest are equal in each case.
\end{proof}
Thus, our results on these parameters for the R\'enyi problem directly lead
to novel results on PATRICIA tries, and vice versa.
In addition to their use as data structures, PATRICIA tries also arise as 
combinatorial structures which capture the behavior of various processes 
of interest in computer science and information theory
(e.g., in leader election processes without trivial splits 
\cite{jansonszpa1996} and in the solution to R\'{e}nyi's problem which we study here
\cite{pittelrubin1990, devroye1992}).

Similarly, the version of the R\'enyi problem that allows inconclusive queries corresponds
to results on tries built on $n$ binary strings from a memoryless source.  We thus discuss
them in the literature survey below.


Now we briefly review relevant facts about PATRICIA tries and other digital trees when built over
$n$ independent strings generated by a memoryless source.  
Profiles of tries in both the asymmetric and symmetric cases were studied 
extensively in \cite{park2008}.  The expected profiles
of digital search trees (another common digital tree with connections to Lempel-Ziv parsing) in both cases were analyzed in \cite{drmotaszpa2011}, and
the variance for the asymmetric case was treated
in \cite{kazemi2011}.  Some aspects of trie and PATRICIA trie profiles 
(in particular, the concentration of their distributions) were
studied using probabilistic methods in \cite{devroye2004, devroye2002}.
The depth in PATRICIA for the symmetric model was analyzed in 
\cite{devroye1992,knuth1998acp} while for the asymmetric model in 
\cite{szpa1990}. The leading asymptotics for the PATRICIA height
for the symmetric Bernoulli model was first analyzed by Pittel   
\cite{Pittel85} (see also \cite{szpa2001Book} for suffix trees).
The two-term expression for the height of PATRICIA for the symmetric model 
was first presented in \cite{pittelrubin1990} as discussed above
(see also \cite{devroye1992}).  To our knowledge, precise asymptotics
beyond the leading term for the height and fillup level have not been given
in the asymmetric case for either tries or digital search trees.
Finally, in \cite{magnerPhD2015,magnerspa2015}, the second two 
authors of the present paper presented a precise analysis of the external profile (including
its mean, variance, and limiting distribution) in the asymmetric case, for the range
in which the profile grows polynomially.  The present work relies on this previous analysis,
but the analyses for $H_n$ and $F_n$ involve a significant extension, since they rely on 
precise asymptotics for the external profile outside this central range.

Regarding methodology, the basic framework (which we use here) for analysis of digital tree recurrences by 
applying the Poisson transform to derive a functional equation, converting this to an algebraic
equation using the Mellin transform, and then inverting using the saddle point method/singularity
analysis followed by depoissonization, was worked out in \cite{park2008} and followed in \cite{drmotaszpa2011}.
While this basic chain is common, the challenges of applying it vary dramatically between the different
digital trees, and this is the case here.  As we discuss later (see (\ref{pg1}) and the surrounding text), this variation starts with the quite different
forms of the Poisson functional equations, which lead to unique analytic challenges.  

The plan for the paper is as follows. In the next section we formulate
more precisely our problem and present our main results regarding 
$B_{n,k}$, $H_n$, $F_n$, and $D_n$, along with sketches
of the derivations.  Complete proofs for $H_n$ (and a roadmap for 
the proof for $F_n$) are 
provided in Section~\ref{Proofs}. Section~\ref{sec:depo} provides some
background on the depoissonization step.  Finally, Section~\ref{secmiracle}
details a surprising series identity which arises in the analysis of $H_n$, 
leading to significant complications.

\section{Main Results}
\label{MainResults}

In this section, we formulate precisely R\'enyi's problem and 
present our main results.  
Our goal is to provide precise asymptotics for three natural parameters 
of the R\'enyi problem on $n$ objects with each label in a given 
query being included with probability $p \geq 1/2$:
the number $F_n$ of queries needed before at least a single element of the bijection can be identified, 
the number $H_n$ needed to recover the bijection 
in its entirety, and the number $D_n$ needed to recover an 
element of the bijection chosen uniformly at random from the $n$ objects.  
If one wishes to determine the label for a particular object, 
these quantities correspond to the best, worst, and average case performance, 
respectively, of the random subset strategy proposed by R\'enyi.

We recall that we can express 
$F_n$, $H_n$, and $D_n$ in terms of the {\it profile} $B_{n,k}$
(defined as the number of bijection elements revealed by the $k$th query)
\begin{equation}
    F_n = \min\{k : ~ B_{n,k} > 0\} - 1, \ 
    H_n = \max\{k : ~ B_{n,k} > 0\},  \ 
    \Pr[D_n = k] = \frac{\E[B_{n,k}]}n.  \label{eqrel2}
\end{equation}
Using the first and second moment methods, we can then obtain upper and lower bounds
on $H_n$ and $F_n$ in terms of the moments of $B_{n,k}$:
\begin{eqnarray}
\label{eq-h}
    \Pr[H_n>k] \leq \sum_{j>k} \E[B_{n,j}],  \ \ \  \ \ 
    \Pr[H_n<k] \leq \frac{\Var[B_{n,k}]}{\E[B_{n,k}]^2},
\end{eqnarray}
and
\begin{eqnarray}
\label{eq-f}
    \Pr[F_n > k] \leq \frac{\Var[B_{n,k}]}{\E[B_{n,k}]^2}, \quad
    \Pr[F_n < k] \leq \E[B_{n,k}].
\end{eqnarray}
The analysis of the distribution of $D_n$ reduces simply to that of $\E[B_{n,k}]$, 
as in (\ref{eqrel2}).

Having reduced the analyses of $F_n, H_n$, and $D_n$ to that of the moments of $B_{n,k}$,
we now explain our approach to the latter analysis, starting in Section~\ref{BasicFactsSection} with a review of the work
done in \cite{magnerPhD2015}. 
We will then show in 
Section~\ref{MainResultsSection} how the present paper 
requires extensions far beyond \cite{magnerPhD2015,magnerspa2015}
to give new results on the quantities of interest in the R\'enyi problem.
\subsection{Basic facts for the analysis of $B_{n,k}$}
\label{BasicFactsSection}
Here we recall some facts, worked out in detail in \cite{magnerPhD2015}, which will form the
starting point of the analysis in the present paper.  
In order to derive our main results, 
we need proper asymptotic information about $\E[B_{n,k}]$ and $\Var[B_{n,k}]$ at the boundaries of this region.

We start by deriving a recurrence for the average profile, 
which we denote by $\mu_{n,k} := \E[B_{n,k}]$. It satisfies
\begin{eqnarray}
    \label{muRecurrence}
    \mu_{n,k} = (p^n + q^n)\mu_{n,k} + \sum_{j=1}^{n-1} { n\choose j } p^j q^{n-j} (\mu_{j,k-1} + \mu_{n-j,k-1})
\end{eqnarray}
for $n\geq 2$ and $k \geq 1$, with some initial/boundary conditions; 
most importantly, $\mu_{n,k} = 0$ for $k \geq n$ and any $n$.
Moreover, $\mu_{n,k} \leq n$ for all $n$ and $k$ owing to the elimination of
inconclusive queries.  This recurrence 
arises from conditioning on the number $j$ of objects that are included 
in the first query.  If $1 \leq j \leq n-1$ objects are included,
then the conditional expectation is a sum of contributions from those 
objects that are included and those that aren't.  If, on
the other hand, all objects are included or all are excluded from the 
first potential query (which happens with probability $p^n + q^n$),
then the partition element splitting constraint on the queries applies, 
the potential query is ignored as {\it inconclusive}, 
and the contribution is $\mu_{n,k}$.

The tools that we use to solve this recurrence (for details see \cite{magnerPhD2015,magnerspa2015}) 
are similar to those of the analyses for digital trees \cite{szpa2001Book} 
such as tries and digital search trees (though the analytical details 
differ significantly).  We first derive a functional equation for the 
Poisson transform 
$\Po{G}_k(z) = \sum_{m \geq 0} \mu_{m,k}\frac{z^m}{m!}e^{-z}$ of
$\mu_{n,k}$, which gives 
\[
    \Po{G}_k(z) = \Po{G}_{k-1}(pz) + \Po{G}_{k-1}(qz) + e^{-pz}(\Po{G}_k - 
\Po{G}_{k-1})(qz) + e^{-qz}(\Po{G}_{k} - \Po{G}_{k-1})(pz).
\]
This we write as
\begin{eqnarray}
\label{pg1}
    \Po{G}_k(z) = \Po{G}_{k-1}(pz) + \Po{G}_{k-1}(qz) + \Po{W}_{k,G}(z), 
\end{eqnarray}
and at this point the goal is to determine asymptotics for 
$\Po{G}_k(z)$ as $z\to\infty$ in a cone around the positive real axis.
When solving (\ref{pg1}), $\Po{W}_{k,G}(z)$ significantly complicates the analysis because it has
no closed-form Mellin transform (see below).
Finally, depoissonization \cite{szpa2001Book} will allow us to directly transfer the
asymptotic expansion for $\Po{G}_k(z)$ back to one for $\mu_{n,k}$ since
$\mu_{n,k}$ is well approximated by $\Po{G}_k(n)$.

To convert (\ref{pg1}) to an equation that is easier to handle,
we use the \emph{Mellin transform} 
\cite{Flajolet95mellintransforms}, which, for a function 
$f:\R \to \R$ is given by
\[
    \Me{f}(s) = \int_{0}^\infty z^{s-1} f(z) \dee{z}.
\]
Using the Mellin transform identities and defining $T(s) = p^{-s} + q^{-s}$, we end up with an expression 
for the Mellin transform $\Me{G_k}(s)$ of $\Po{G}_k(z)$ of the form
\[
    \Me{G_k}(s) = \Gamma(s+1)A_k(s)( p^{-s} + q^{-s})^k=
    \Gamma(s+1)A_k(s) T(s)^k,
\]
where $A_k(s)$ is an infinite series arising from the contributions
coming from the function $\Po{W}_{k,G}(z)$, and the fundamental strip of 
$\Po{G}_k(z)$ contains
$(-k-1, \infty)$. It involves  unknown 
$\mu_{m,j} - \mu_{m,j-1}$ for various $m$ and $j$ 
(see  \cite{magnerPhD2015,magnerknesslszpa2014}), that is:
\begin{eqnarray}
    \label{A_kFormula}
    A_k(s)
    = \sum_{j=0}^k T(s)^{-j} \sum_{m \geq j} T(-m)(\mu_{m,j} - \mu_{m,j-1})\frac{\Gamma(m+s)}{\Gamma(s+1)\Gamma(m+1)}.
\end{eqnarray}
Locating and characterizing the singularities
of $\Me{G_k}(s)$ then becomes important. In \cite{magnerspa2015} it is
shown that   for any $k$, $A_k(s)$ is entire, with zeros at 
$s \in \Z \lintersect [-k, -1]$, so that $\Me{G_k}(s)$ is meromorphic,
with possible simple poles at the negative integers less than $-k$.  
The fundamental strip of $\Po{G}_k(z)$ then contains $(-k-1, \infty)$.

We then must asymptotically invert the Mellin transform to recover $\Po{G}_k(z)$.  The Mellin inversion
formula for $\Me{G_k}(s)$ is given by
\begin{eqnarray}
    \label{MellinInversionFormula}
    \Po{G}_k(z) = \frac{1}{2\pi i} \int_{\rho - i\infty}^{\rho + i\infty} 
z^{-s}\Me{G_k}(s)\dee{s}
    = \frac{1}{2\pi i} \int_{\rho - i\infty}^{\rho + i\infty} 
z^{-s}\Gamma(s+1)A_k(s)T(s)^k\dee{s},
\end{eqnarray}
where $\rho$ is any real number inside the fundamental strip 
associated with $\Po{G}_k(z)$.  

\subsection{Main results via extension of the analysis of $B_{n,k}$}
\label{MainResultsSection}
Having explained the relevant functional equations and the integral representation (\ref{MellinInversionFormula})
for $\Po{G}_k(z)$, we now move on to describe the main results of this paper. 
For Theorem~\ref{HeightTheorem} and \ref{FillupTheorem} we start with a sketch of the derivation
whereas the proof of Theorem~\ref{DepthCorollary} is given immediately. 
The complete proof of Theorem~\ref{HeightTheorem} and a roadmap for Theorem~\ref{FillupTheorem}, both for the case $p>q$,
 is given in Section~\ref{Proofs}.
 
\subsubsection{Result on $H_n$}
\label{HeightSketch}
Our first aim is to derive two-term expansions for the typical values of $H_n$ and $F_n$.  To do this for, e.g., $H_n$,
we define, for $p \geq q$,
\[
    k_* = \log_{1/p} n + \psi_*(n),
\]
where $\psi_*(n) = o(\log n)$ is a function to be determined.  We also define
\begin{eqnarray}
    \label{PsiLDefinition}
    \psi_L(n) = (1-\epsilon)\psi_*(n) &&        k_L = \log_{1/p} n + \psi_L(n) \\
    \psi_U(n) = (1+\epsilon)\psi_*(n) &&        k_U = \log_{1/p} n + \psi_U(n),
    \label{PsiUDefinition}
\end{eqnarray}
for arbitrarily small $\epsilon > 0$.
We require that $\psi_*(n)$ be such that
\begin{eqnarray}
    \label{PhaseTransitionExpr}
    \E[B_{n,k_L}] \to \infty,       &&
    \E[B_{n,k_U}] \to 0,
\end{eqnarray}
and a proper upper bound for $\Var[B_{n,k_L}]$ 
(see Lemma~\ref{VarianceLemma}). However, in order to make the 
following pre-analysis more transparent we will not dwell on the variance.

To determine a candidate for $\psi_*(n)$, we start with the inverse 
Mellin integral
representation for $\Po{G}_{k_*}(n)$:
\begin{eqnarray}
    \Po{G}_{k_*}(n)
    = \frac{1}{2\pi i} \int_{\rho - i\infty}^{\rho + i\infty} J_{k_*}(n, s)\dee{s},
    \label{InverseMellinIntegral}
\end{eqnarray}
where we define
\begin{eqnarray}
\nonumber
    J_k(n, s)
    &=& n^{-s}T(s)^k\Gamma(s+1)A_k(s) \\
    &=& \sum_{j=0}^k n^{-s}T(s)^{k-j}\sum_{m \geq j} T(-m)(\mu_{m,j} - \mu_{m,j-1
})\frac{\Gamma(m+s)}{\Gamma(m+1)}.
\label{eq-j}
\end{eqnarray}
Note that by depoissonization (see Section~\ref{sec:depo} and \cite{jacquetszpa1997}) 
we have
\[
\mu_{n,k_*} = \Po{G}_{k_*}(n) - \frac n2 \Po{G}''_{k_*}(n) + O(n^{-1+\epsilon}). 
\] 
Indeed, because of the exponential decay of $A_k(s)\Gamma(s+1)$ 
along vertical lines, the entire integral
is at most of the same order as the integrand on the real axis (we justify this more carefully in Section~\ref{HProof}). Furthermore, since the second derivative has an additional factor
$s(s+1)n^{-2}$ in the integrand we will get a similar bound for $\frac n2 \Po{G}''_{k_*}(n)$ which
is just $\rho^2/n$ times the corresponding bound for $\Po{G}_{k_*}(n)$ and, thus, negligible 
in comparison to $\Po{G}_{k_*}(n)$.

In this proof roadmap we focus on estimating the
integrand $J_{k_*}(n, \rho)$, $\rho \in \R$, as precisely as possible.  
Using Lemma~\ref{KnesslLemma}, we find (see (\ref{eqfirstesti}) 
in Section~\ref{HProof}) that the $j$th term in the representation (\ref{eq-j}) of $J_{k_*}(n,\rho)$ is of 
order 
\begin{eqnarray}\label{eqjthterm}
n^{-\rho} T(\rho)^{k_*-j} p^{j^2/2 + O(j\log j)},
\end{eqnarray}
where $\rho < 0$
and $T(\rho) = p^{-\rho} + q^{-\rho}$. Hence, by setting $j_0 = -\log_{1/p} T(\rho)$
we have
\begin{eqnarray}\label{eqJk*bound}
J_{k_*}(n, \rho) = O\left( n^{-\rho} T(\rho)^{k_*} p^{-j_0^2/2 + O(j_0\log j_0)} \right).
\end{eqnarray}
Next we have to choose $\rho \in \R_-$ that minimizes this upper bound.
Here we distinguish between the symmetric case $p = q = 1/2$ and the case $p> q$.

In the symmetric case we have $T(\rho) = 2^{\rho +1}$ and $j_0 = -\rho-1$ and, thus,
\[
J_{k_*}(n, \rho) = O\left( n^{-\rho} 2^{(\rho+1)(\log_2 n + 
\psi_*(n))+\rho^2/2 + O(|\rho|\log |\rho|)} \right).
\] 
Consequently by disregarding the error term $O(|\rho|\log |\rho|)$ the optimal choice 
of $\rho$ is $\rho = -\psi_*(n)$ which gives the upper bound
\[
J_{k_*}(n, \rho) = O\left( 2^{\log_2 n - \psi_*(n)^2/2 + O(|\psi_*(n)|\log |\psi_*(n)|)} \right).
\] 
Hence, the threshold for this upper bound is $\psi_*(n) = \sqrt{2\log_2 n}$. In particular it also follows that
\[
J_{k_U}(n, \rho) = O\left( 2^{-(2\epsilon+\epsilon^2)\log_2 n  + O(\sqrt{\log n} \log\log n )} \right),
\] 
where $k_U = \log_{1/p} n + (1+\epsilon)\sqrt{2\log_2 n}$. We also note that we get the same bound
if $\rho = -\psi_*(n) + O(1)$. 

In the case $p> q$ we have to be slightly more careful. Nevertheless we can start with the
upper bound (\ref{eqJk*bound}) and obtain
\[
J_{k_*}(n, \rho) = O\left( p^{ ( \rho- \log_{1/p} T(\rho))\log_{1/p}n - \psi_*(n) \log_{1/p} T(\rho) - (\log_{1/p} T(\rho))^2/2 
  + O(j_0\log j_0)  }   \right).
\]
 From the representation 
$T(\rho) = p^{-\rho}( 1+ (p/q)^\rho)$ we obtain
\[
\log_{1/p} T(\rho) = \rho + \frac{ (p/q)^\rho }{\log(1/p)} + O\left(  (p/q)^{2\rho} \right).
\]
It is clear that we have to choose $\rho < 0$ that tends to $-\infty$ if $n\to\infty$.
Hence, $\log_{1/p} T(\rho) = \rho + o(1)$ and consequently a proper choice for $\rho$ is
the solution of the equation
\begin{eqnarray*}
    \frac{\partial}{\partial \rho} \left(
    - \frac{ (p/q)^\rho }{\log(1/p)} \log_{1/p}n - \psi_*(n) \rho  -  
    \frac{\rho^2}2 \right) 
    = \frac{ (p/q)^\rho \log(p/q)}{\log(1/p)} \log_{1/p}n- \psi_*(n) -\rho = 0.
\end{eqnarray*}
Actually this gives $\rho < - \psi_*(n)$ and, thus, 
\[
\rho = - \log_{p/q}\log n + O(\log\log\log n).
\]
With this choice the upper bound for $J_{k_*}(n, \rho)$ writes as 
\begin{eqnarray}
J_{k_*}(n, \rho) = O\left( p^{ (\psi_*(n)+\rho)/\log(p/q) - \psi_*(n) \rho - \frac{\rho^2}2 
  + O(j_0\log j_0)  }   \right) 
= O\left( p^{ - \psi_*(n) \rho - \frac{\rho^2}2 
  + O(j_0\log j_0)  }   \right).
  \label{MyEqn}
\end{eqnarray}
This implies that the threshold for this upper bound is given by
\[
\psi_*(n) = -\frac \rho 2 = \frac 12 \log_{p/q} \log n + O(\log\log\log n).
\]
In particular, if we replace $\psi_*(n)$ by 
$\psi_U(n) = \frac 12(1+\epsilon)  \log_{p/q} \log n$ we obtain
\begin{eqnarray} \label{HeightMaxContribution0}
J_{k_U}(n, \rho) = O\left( p^{ \epsilon(\log_{p/q} \log n)^2/2 + 
O(\log\log n \log\log \log n) }   \right)
\end{eqnarray}
and for 
$\psi_L(n) = \left(1 -\epsilon\right) \frac 12 \log_{p/q} \log n$,
\begin{eqnarray} \label{HeightMaxContribution}
J_{k_L}(n, \rho) = O\left( p^{ -\epsilon(\log_{p/q} \log n)^2/2 + O(\log\log n \log\log \log n }   \right).
\end{eqnarray}

The above pre-analysis suggests asymptotic estimates for $\Po{G}_k(n)$ and, thus, by 
depoissonization estimates for $\mu_{n,k}$, which imply a two-term expansion for $H_n$.
The complete proof of this result is given in Section~\ref{HProof}.
In summary, we formulate below our first main result.

\begin{theorem}[Asymptotics for $H_n$]
    \label{HeightTheorem}
    With high probability,
    \begin{eqnarray*}
        H_n = \left\{ \begin{array}{ll}
                \log_{1/p} n + \frac 12\log_{p/q}\log n + o(\log\log n) &
                p > q \\
                \log_{2} n + \sqrt{2\log_2 n} + o(\sqrt{\log n}) &
                p = q
        \end{array}   \right.
    \end{eqnarray*}
for large $n$.
\end{theorem}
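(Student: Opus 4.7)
The plan is to bound $H_n$ via the first and second moment inequalities (\ref{eq-h}). I would set $k_U = \log_{1/p}n + \psi_U(n)$ and $k_L = \log_{1/p}n + \psi_L(n)$ with $\psi_*(n) = \tfrac{1}{2}\log_{p/q}\log n$ in the asymmetric case and $\psi_*(n) = \sqrt{2\log_2 n}$ in the symmetric case, since the pre-analysis of Section~\ref{HeightSketch} already identifies these as the thresholds at which the integrand of the Mellin representation switches from blowing up to decaying. The upper bound $H_n \leq k_U$ then follows from $\sum_{j > k_U}\E[B_{n,j}] \to 0$, while the lower bound $H_n \geq k_L$ follows from $\Var[B_{n,k_L}]/\E[B_{n,k_L}]^2 \to 0$, so the theorem reduces to two-sided asymptotic control of the first two moments of the external profile at these specific levels.

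For the first moment I would combine depoissonization with Mellin inversion: $\mu_{n,k} = \Po{G}_k(n) - \tfrac{n}{2}\Po{G}_k''(n) + O(n^{-1+\epsilon})$ by the depoissonization lemma of Section~\ref{sec:depo}, and the relevant saddle of the contour (\ref{MellinInversionFormula}) is at $\rho = -\psi_*(n)$ in the symmetric case and at $\rho = -\log_{p/q}\log n + O(\log\log\log n)$ in the asymmetric case. To pass from the pointwise integrand bounds (\ref{HeightMaxContribution0}) and (\ref{HeightMaxContribution}) to bounds on the full line integral I would exploit the exponential decay of $\Gamma(s+1)$ in $|\Im(s)|$ together with the uniform control of $A_k(s)$ on vertical lines provided by \cite{magnerspa2015,magnerPhD2015}, so that the integral is of the same order as the integrand at the real saddle. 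Summing the resulting estimate of $\E[B_{n,j}]$ over $j > k_U$ geometrically in $\psi$ kills the tail sum and yields the upper bound; the growth estimate at $k_L$ is similarly immediate.

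For the variance, needed only at level $k_L$, I would set up the analogous recurrence for $\E[B_{n,k}(B_{n,k}-1)]$ and run the same Poisson/Mellin/saddle-point machinery. The dominant contribution turns out to have the same polynomial order as $\E[B_{n,k_L}]^2$ but with an additional logarithmic correction that is absorbed by a factor $\sqrt{\log n}$ in the denominator; this is exactly what Lemma~\ref{VarianceLemma} is designed to package, and combining it with the first-moment growth yields $\Var[B_{n,k_L}]/\E[B_{n,k_L}]^2 \to 0$.

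The main obstacle is the rigorous handling of the auxiliary series $A_k(s)$ in (\ref{A_kFormula}), which has no closed form and involves the very quantities $\mu_{m,j} - \mu_{m,j-1}$ one is trying to estimate. To justify the term-by-term bound (\ref{eqjthterm}) uniformly as $\rho \to -\infty$ one needs an extension of Lemma~\ref{KnesslLemma} that tracks both the Gamma-factor decay and the $T(-m)$-weighted combinatorics in $A_k(s)$, and the critical index $j_0 = -\log_{1/p}T(\rho)$ is not an integer, so several neighboring $j$'s contribute comparably. The sharp coefficient $\tfrac{1}{2}$ in front of $\log_{p/q}\log n$ should then only emerge after resumming the series identity announced in Section~\ref{secmiracle}; I expect this analytic miracle to be the most delicate step of the whole argument, with everything else reducing to bookkeeping once it is in hand.
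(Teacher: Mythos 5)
Your high-level architecture matches the paper's: first/second moment bounds~(\ref{eq-h}), levels $k_U, k_L$ centered at $\log_{1/p}n + \psi_*(n)$, depoissonization plus Mellin inversion with a real saddle at $\rho \sim -\log_{p/q}\log n$, and a variance bound at $k_L$. The upper bound $\Pr[H_n > k_U]\to 0$ and the variance estimate (Lemma~\ref{VarianceLemma}) really do go through essentially as you sketch, modulo the technical care of pushing the contour to a negative half-integer and controlling $A_k(s)$ via Lemma~\ref{KnesslLemma}.

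The genuine gap is in your treatment of $\E[B_{n,k_L}]$ from below, which you call ``similarly immediate.'' It is not. Everything you describe --- bounding the integrand $J_k(n,\rho)$ on the contour, using the decay of $\Gamma(s+1)$, saddle-point optimization of $\rho$ --- only produces \emph{upper} bounds on $|\Po{G}_k(n)|$. To get the lower bound $\mu_{n,k_L}=\Omega(\cdot)$ that the second-moment method needs, the paper abandons the contour-integral upper-bounding machinery entirely: Lemma~\ref{Lelower-1} evaluates the inverse Mellin integral \emph{exactly} as a sum of residues, yielding the nested representation~(\ref{G_kExplicitFormHLowerBound}), which is then regrouped into the pieces $T_1,T_2,T_{30},T_{31},T_{32}$. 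The signs in this sum are not all the same, so one must prove there is no catastrophic cancellation. And there nearly \emph{is}: the term that formally dominates carries the coefficient $D(p)$ from~(\ref{eqDefAp}), and the ``unexpected identity'' $D(p)=0$ of Section~\ref{secmiracle} kills it. This forces the analysis to the next order, where one must show the constant $C(p,u,v)$ in~(\ref{eqpositive}) is strictly positive --- which the paper establishes by separate asymptotic analyses as $p\to 1/2$ and $p\to 1$ (Lemmas~\ref{Lepto1/2},~\ref{Lepto1}), bridged by a numerical verification on $0.51\le p\le 0.97$. None of this is bookkeeping, and none of it is present in your proposal.

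You have also inverted the role of the series identity. You write that the coefficient $\tfrac12$ ``emerges after resumming the series identity announced in Section~\ref{secmiracle}.'' In fact the $\tfrac12$ is already fixed by the saddle-point optimization in the pre-analysis (Section~\ref{HeightSketch}, leading to~(\ref{MyEqn})): it is the $\rho$ that balances $-\psi_*(n)\rho - \rho^2/2$ that gives $\psi_*(n)=-\rho/2=\tfrac12\log_{p/q}\log n$. The identity $D(p)=0$ is not what produces the second-order term; it is an obstacle that, had it failed to hold, would have made the lower bound \emph{easier}, not harder. Finally, the $\sqrt{\log n}$ factor you invoke in the variance discussion is the central-range phenomenon from $\alpha\in(1/\log(1/q)+\epsilon,1/\log(1/p)-\epsilon)$; at the boundary level $k_L$ the ratio $\Var[B_{n,k_L}]/\mu_{n,k_L}^2\to 0$ comes instead from comparing Lemma~\ref{VarianceLemma} with the $\Omega$-bound of Lemma~\ref{LemunkLbound}, and both quantities grow like $e^{\Theta((\log\log n)^2)}$ rather than polynomially.
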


\subsubsection{Result on $F_n$}
We take a similar approach for the derivation of $F_n$, with some differences.  We set
\begin{eqnarray*}
    k_* = \log_{1/q} n + \phi_*(n)
\end{eqnarray*}
with
\begin{eqnarray*}
    \phi_L(n) = (1+\epsilon)\phi_*(n),      &&
    \phi_U(n) = (1-\epsilon)\phi_*(n), 
\end{eqnarray*}
and $k_L$ and $k_U$, respectively, defined with $\phi_L$ (respectively, $\phi_U$) in place of $\phi_*$.
The derivation of an estimate for the $j$th term of $J_{k_*}(n, \rho)$, $\rho\in\R$, is similar to that in Section~\ref{HeightSketch},
except now the asymptotics of $\Gamma(\rho+1)$ play a role (this is reflected in the proof, where $\Gamma(\rho+1)$ determines
the location of the saddle points of the integrand).  We find that the $j$th term is at most $q^{\lambda_j(n, \rho)}$,
where
\begin{eqnarray}
    \lambda_j(n, \rho) = \rho(j - \phi_*(n)) + (j - \phi_*(n) - \log_{1/q} n)\log_{1/q}(1 + (q/p)^{\rho}) - \rho\log_{1/q} \rho + O(\rho).
    \label{OptimalNu}
\end{eqnarray}
Optimizing over $j$ gives $j = 0$.
The behavior with respect to $\rho$ depends on whether or not $p=q$, because $\log_{1/q}(1 + (q/p)^\rho) = 1$ when $p=q$
and is dependent on $\rho$ otherwise.  Taking this into account and minimizing over all $\rho$ gives an optimal value of
\begin{eqnarray*}
     \rho =
     \left\{\begin{array}{ll}
        2^{-\phi_*(n) - 1/\log 2}       &  p=q=1/2, \\
        \log_{p/q}\log n                & p > 1/2.
     \end{array} \right.
\end{eqnarray*}
Note that this corresponds to the real part of the saddle points in the proof.
Plugging this into (\ref{OptimalNu}), setting the resulting expression equal to $0$, and solving for $\phi_*(n)$ gives
\begin{eqnarray*}
    \phi_*(n) =
    \left\{\begin{array}{ll}
        -\log_2\log n + O(1)    & p=q=1/2 \\
        -\log_{1/q}\log\log n   & p > 1/2.
    \end{array} \right.
\end{eqnarray*}

This heuristic derivation suggests that  the following theorem holds. 
More details are given in Section~\ref{FProof}.
\begin{theorem}[Asymptotics for $F_n$]
    \label{FillupTheorem}
    With high probability,
    \begin{eqnarray*}
        F_n =
        \left\{\begin{array}{ll}
            \log_{1/q} n - \log_{1/q}\log\log n + o(\log\log\log n)  &
            p > q \\
            \log_{2} n - \log_2\log n + o(\log\log n) &
            p = q 
        \end{array} \right. 
    \end{eqnarray*}
for large $n$.
\end{theorem}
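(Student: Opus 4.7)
The plan is to deduce the theorem from the first and second moment bounds (\ref{eq-f}), which reduce everything to sharp asymptotics for $\E[B_{n,k}]$ and $\Var[B_{n,k}]$ at the two thresholds $k_L$ and $k_U$ lying just to the left of the central range. Concretely, I would show $\E[B_{n,k_L}] \to 0$ (giving $\Pr[F_n < k_L] \to 0$) and $\Var[B_{n,k_U}] = o(\E[B_{n,k_U}]^2)$ together with $\E[B_{n,k_U}] \to \infty$ (giving $\Pr[F_n > k_U] \to 0$). Because $\epsilon>0$ is arbitrary, this sandwiches $F_n$ at the announced value of $k_*$ whp, in both the $p>q$ and $p=q=1/2$ regimes.

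For the expectation I would follow the saddle point and Mellin strategy of (\ref{InverseMellinIntegral})--(\ref{eq-j}) developed for $H_n$ in Section~\ref{HeightSketch}, but now applied at the \emph{left} boundary of the central range. The crucial new feature is that the relevant saddle point sits on the \emph{positive} real axis: optimizing the $j$th-term estimate (\ref{OptimalNu}) in $j$ yields $j=0$, while balancing in $\rho$ gives $\rho = \log_{p/q}\log n$ when $p>q$ and $\rho \approx 2^{-\phi_*(n)-1/\log 2}$ when $p=q=1/2$. Because $\rho > 0$, the factor $\Gamma(\rho+1)$ appearing in $J_k(n,s)$ now contributes Stirling-type \emph{growth} instead of decay, and this growth is precisely what produces the $-\log_{1/q}\log\log n$ correction for $p>q$ and the $-\log_2\log n$ correction in the symmetric case. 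Substituting the saddle values, together with the standard exponential decay of $\Gamma(\rho+1)A_k(\rho+it)$ along vertical lines that confines the integral to a neighborhood of the real axis, yields the required expansion for $\Po{G}_k(n)$, which transfers to $\mu_{n,k}$ by the depoissonization argument of Section~\ref{sec:depo}.

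The main obstacle will be the variance bound. I would set up the analogous Poisson functional equation for the second factorial moment $\E[B_{n,k}(B_{n,k}-1)]$, which has the same structural shape as (\ref{pg1}) but carries an additional bilinear inhomogeneity encoding pairs of singletons revealed by the same query. Mellin-transforming should produce an expression of the form $\Gamma(s+1)\tilde{A}_k(s) T(s)^k$ with its own correction series, and I expect the saddle point analysis to run in parallel with that of the first moment, yielding $\Var[B_{n,k_U}] = O(\E[B_{n,k_U}])$---the independence-like behavior one expects for leaves born well outside the central range. The delicate point is that at a \emph{positive} abscissa $\rho$ the correction series---the variance analog of $\Po{W}_{k,G}(z)$, tied to the series identity of Section~\ref{secmiracle}---is no longer obviously subdominant, so controlling it will require a tail estimate on $\mu_{m,j}-\mu_{m,j-1}$ in the spirit of Lemma~\ref{KnesslLemma}. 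Once the variance bound is secured, combining it with the expectation asymptotics via (\ref{eq-f}) and matching $k_L$ and $k_U$ completes the proof.
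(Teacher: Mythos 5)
Your high-level framework matches the paper's: reduce to first and second moment bounds at $k_L$ and $k_U$, estimate $\E[B_{n,k}]$ and $\Var[B_{n,k}]$ via inverse Mellin/saddle point at a real abscissa $\rho$ which now lies on the \emph{positive} axis, and transfer via depoissonization. You also correctly identify that $\Gamma(\rho+1)$ drives the second-order correction and that the saddle is at $j=0$, $\rho \approx \log_{p/q}\log n$ (respectively $\rho\approx 2^{-\phi_*(n)-1/\log 2}$ when $p=q$).

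However, you misidentify the key a-priori input for the lower bound on $\mu_{n,k_U}$. Because $\rho>0$, the residue expansion for $\Po G_k(n)$ (analogous to (\ref{eqkapparep}) but with the $\ell$-sum starting at $\ell=0$, since there are no pole contributions) has its dominant contribution from the terms $j=O(1)$, $m=\rho\cdot p/q+O(1)$. The relevant estimate on $\mu_{m,j}-\mu_{m,j-1}$ is therefore Lemma~\ref{MuJO1Lemma} (precise asymptotics $\mu_{m,j}\sim m q^j(1-q^j)^{m-1}$ for $j=O(1)$, $m\to\infty$, together with the crucial positivity $\mu_{m,j}-\mu_{m,j-1}>0$), not Lemma~\ref{KnesslLemma}, which governs the opposite regime $j$ near $m$ and is the engine of the $H_n$ analysis. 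Similarly, the series identity of Section~\ref{secmiracle} ($D(p)=0$) is a phenomenon specific to the $H_n$ lower bound, where it makes the naively dominant term vanish; it plays no role at all in the $F_n$ analysis, which is comparatively benign precisely because positivity of the nearby terms is manifest. Without the correct a-priori lemma and the positivity observation, your lower bound on $\mu_{n,k_U}$ does not go through; a saddle point \emph{upper} bound of the integral is not enough, one needs the explicit residue-sum representation to extract a positive leading term. Finally, the paper notes a much simpler route for the lower bound on $F_n$ that you do not mention: by the PATRICIA-trie correspondence, $F_n$ dominates the fillup level of the ordinary trie built on the same strings, and the two-term expansion of the trie fillup level is known from \cite{park2008}, which directly gives $\Pr[F_n<k_L]\to 0$ without any Mellin analysis at all.
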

\subsection{Result on $D_n$}
We move to  our results concerning $D_n$.  To state them, we first need to observe that there is a natural
way to define the sequence $\{D_n\}_{n\geq 0}$ on a single probability space, so that we may ask whether
or not $D_n$, properly normalized, converges almost surely, and to what limiting value.  This common space
is defined by appealing to the correspondence between the sequence of R\'enyi problem queries and the growth
of a random PATRICIA trie.  For each $n \geq 0$, we define a tree $T_n$ which is a PATRICIA trie constructed
on $n$ strings (equivalently, a terminating sequence of R\'enyi queries recovering a bijection between two sets
of $n$ elements): $T_0$ is an empty tree, and $T_{n+1}$ is constructed from $T_n$ by generating an independent
string of i.i.d. $\Bernoulli(p)$ random variables and inserting this string into $T_n$.  Then, for each $n$,
$D_n$ is the depth of a leaf chosen uniformly at random (and independent of everything else) from the leaves
of $T_{n}$.

With this construction in mind, we have the following result about the 
convergence of $D_n$.  Its proof combines known facts about the profile 
with the new ones proved here, as well as
a proof technique that was used before in, e.g., \cite{Pittel85}.

\begin{theorem}[Asymptotics of $D_n$]
    \label{DepthCorollary}
    For $p > 1/2$, the normalized depth $D_n/\log n$ converges in probability to $1/h(p)$ 
    where $h(p) := -p\log p - q\log q$ is the entropy of a $\Bernoulli(p)$ random variable, but not
    almost surely. In fact,  
    \begin{eqnarray}
        \label{D_nNoAlmostSure}
        \liminf_{n\to\infty} D_n/\log n = 1/\log(1/q), &&
        \limsup_{n\to\infty} D_n/\log n = 1/\log(1/p)
    \end{eqnarray}
almost surely.
\end{theorem}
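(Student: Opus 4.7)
The approach is to combine the identity $\Pr[D_n=k] = \E[B_{n,k}]/n$ from (\ref{eqrel2}) with the deterministic sandwich $F_n+1\le D_n\le H_n$, which holds surely because the uniformly chosen leaf has some depth in $[F_n+1,H_n]$. From this viewpoint, convergence in probability to $1/h(p)$ is immediate from the profile asymptotics of \cite{magnerspa2015}: the expected profile $\E[B_{n,k}]$ has its unique polynomial peak at $k^*=\log n/h(p)$ with growth exponent $\beta(\alpha)\le 1$ and $\beta(\alpha)<1$ for $\alpha\ne 1/h(p)$, so $\sum_{k:\,|k-k^*|>\epsilon\log n}\E[B_{n,k}]=o(n)$ and therefore $\Pr[|D_n/\log n-1/h(p)|>\epsilon]\to 0$.

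For (\ref{D_nNoAlmostSure}) I would establish the two outer bounds $\limsup D_n/\log n \le 1/\log(1/p)$ and $\liminf D_n/\log n \ge 1/\log(1/q)$ by upgrading Theorems~\ref{HeightTheorem} and~\ref{FillupTheorem} from ``with high probability'' to almost sure convergence. This is done by revisiting the saddle-point estimates of Sections~\ref{HeightSketch} and~\ref{Proofs} to see that the first-moment bound (\ref{eq-h}) for $H_n$, and likewise (\ref{eq-f}) for $F_n$, is actually polynomially (hence summably) small in $n$ whenever $k$ is outside the asserted window by a multiplicative factor $1\pm\epsilon$; the first Borel--Cantelli lemma then closes the gap, and the sandwich gives both inequalities. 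For the matching inner bounds I would invoke the conditional Borel--Cantelli trick familiar from the PATRICIA literature (e.g., \cite{Pittel85}): the leaf selection defining $D_n$ is independent of the tree sequence $(T_n)$ and across $n$, so conditionally on $(T_n)$ the events $\{D_n=H_n\}$ are independent Bernoullis with
\[
 \Pr[D_n=H_n\mid T_n]=\frac{B_{n,H_n}}{n}\ge\frac{1}{n},
\]
using $B_{n,H_n}\ge 1$ by the very definition of $H_n$; since $\sum 1/n=\infty$, the event $\{D_n=H_n\}$ holds infinitely often almost surely. Along that random subsequence $D_n/\log n=H_n/\log n\to 1/\log(1/p)$ by the upgraded Theorem~\ref{HeightTheorem}, yielding $\limsup D_n/\log n \ge 1/\log(1/p)$; the symmetric argument with $B_{n,F_n+1}\ge 1$ and the upgraded Theorem~\ref{FillupTheorem} yields $\liminf D_n/\log n \le 1/\log(1/q)$. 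Because $1/\log(1/q)<1/h(p)<1/\log(1/p)$ when $p>q$, the matching inner bounds also rule out almost-sure convergence of $D_n/\log n$.

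The main (and essentially only non-bookkeeping) obstacle is the quantitative upgrade of Theorems~\ref{HeightTheorem} and~\ref{FillupTheorem} from ``whp'' to summable-tail bounds: one must verify that, outside the asserted windows, the moment bounds in (\ref{eq-h}) and (\ref{eq-f}) furnished by the saddle-point analysis decay like a negative power of $n$ rather than merely tending to zero. Everything else in the argument is a routine application of conditional Borel--Cantelli together with the deterministic pincer $F_n+1\le D_n\le H_n$.
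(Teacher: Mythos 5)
Your overall skeleton---pin $D_n$ between $F_n+1$ and $H_n$, establish almost sure first-order limits for the two envelopes, and then force $D_n$ to touch each envelope infinitely often via conditional Borel--Cantelli---is precisely the paper's strategy. The conditional Borel--Cantelli step for the inner bounds, using $\Pr[D_n=H_n\mid T_n] = B_{n,H_n}/n \ge 1/n$ and the independence of the leaf draws given the tree sequence, is the same argument the paper gives (and if anything you phrase the conditional independence more carefully; the paper simply asserts that the events are independent). The convergence in probability is handled by a different but equally valid route: you extract it from the polynomial peak of $\E[B_{n,k}]$ at $k^*=\log n/h(p)$, whereas the paper cites the central limit theorem for $D_n$ from \cite{szpa2001Book}.

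The genuine gap is in your proposed upgrade of Theorems~\ref{HeightTheorem} and~\ref{FillupTheorem} to summable, polynomially small tails. The saddle-point bounds actually proved in Section~\ref{Proofs} give $\Pr[|H_n/\log n - 1/\log(1/p)| > \epsilon] = O\bigl(e^{-\Theta((\log\log n)^2)}\bigr)$, and $\sum_n e^{-\Theta((\log\log n)^2)}$ diverges; the paper says so explicitly (``We cannot apply the Borel--Cantelli lemmas directly, because the relevant sums do not converge.''). Rather than proving a sharper tail estimate, the paper exploits monotonicity of $(H_n)$ and $(F_n)$: along the doubly exponential subsequence $n_{r,t}=2^{t^2 2^{2r}}$ the known bound \emph{is} summable, Borel--Cantelli gives almost sure convergence along that subsequence, and the squeeze $H_{n_{r,t}}\le H_n\le H_{n_{r,t+1}}$ together with $\log n_{r,t+1}/\log n_{r,t}=(t+1)^2/t^2$ and $t\to\infty$ finishes the argument. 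Your polynomial decay claim may well be provable (for $k$ a constant multiple away from $\log_{1/p}n$ the integrand ought to be polynomially small), but it is not established anywhere in the paper, you flag it as the main obstacle without carrying it out, and it is not needed: the monotonicity-plus-subsequence device reaches the conclusion from the bound already in hand. Either supply the stronger estimate explicitly, or adopt the paper's subsequence trick.
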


\begin{proof}
The fact that $D_n/\log n$ converges in probability to $1/h(p)$ follows directly from the 
central limit theorem for $D_n$ given in \cite{szpa2001Book}.

Next we show that (\ref{D_nNoAlmostSure}) holds.
Clearly $F_n \le D_n \le H_n$. Now let us consider the following
    sequences of events: $A_n$ is the event that $D_n = F_n+1$, and $A'_n$ is the event that $D_n = H_n$.
    We note that all elements of the sequences are independent, and $\Pr[A_n] \ge 1/n$, $\Pr[A'_n] \geq 1/n$.  This
    implies that $\sum_{n=1}^\infty \Pr[A_n] = \sum_{n=1}^\infty \Pr[A'_n] = \infty$, so that the Borel-Cantelli
    lemma tells us that both $A_n$ and $A'_n$ occur infinitely often almost surely.  

In the next step we show that, almost surely, $F_n/\log n \to 1/\log(1/q)$
    and $H_n/\log n \to 1/\log(1/p)$. Then (\ref{D_nNoAlmostSure}) is proved.
    We cannot apply the Borel-Cantelli lemmas directly, because the relevant sums do not converge.  Instead, we apply the following trick: 
    we observe that both $(F_n)$ and $(H_n)$ are non-decreasing sequences.  Next, we show that, on some appropriately chosen
    subsequence, both of these sequences, when divided by $\log n$, converge almost surely to their respective
    limits.  Combining this with the observed monotonicity yields the claimed almost sure convergence, and,
    hence, the equalities in (\ref{D_nNoAlmostSure}).

    We illustrate this idea more precisely for $H_n$.  By our analysis above, we know that
    \[
        \Pr[|H_n/\log n - 1/\log(1/p)| > \epsilon] = O(e^{-\Theta(\log\log n)^2}).
    \]
    Then we fix $t$, and we define $n_{r,t} = 2^{t^2 2^{2r}}$.  
    On this subsequence, by the probability
    bound just stated, we can apply the Borel-Cantelli lemma to conclude 
    that $H_{n_{r,t}}/\log(n_{r,t}) \to 1/\log(1/p) \cdot (t+1)^2/t^2$ 
    almost surely.  Moreover, for every $n$, we can choose $r$ such 
    that $n_{r,t} \leq n \leq n_{r,t+1}$.
    Then
    \[
        H_n/\log n \leq H_{n_{r,t+1}}/\log n_{r,t},
    \]
    which implies
    \[
        \limsup_{n\to\infty} \frac{H_n}{\log n} \leq 
        \limsup_{r\to\infty} \frac{H_{n_{r,t+1}}}{\log n_{r,t+1}} 
        \frac{\log n_{r,t+1}}{\log n_{r,t}}
        = \frac{1}{\log(1/p)} \cdot \frac{(t+1)^2}{t^2}.
    \]
    Taking $t \to \infty$, this becomes $1/\log(1/p)$, as desired.  
    The argument for the $\liminf$ is similar,
    and this establishes the almost sure convergence of $H_n$.  
    The derivation is entirely similar for $F_n$.
  \end{proof}

\section{Proof of Theorems~\ref{HeightTheorem}  and  \ref{FillupTheorem}}
\label{Proofs}

We give a detailed proof of Theorem~\ref{HeightTheorem} and indicate the
main lines of the proof of Theorem~\ref{FillupTheorem}. 
We also concentrate just on the case $p>q$.
The proof of the symmetric case can be done by the same techniques 
(properly adapted) but it just reproves the result by Pittel and Rubin \cite{pittelrubin1990}.

\subsection{Proof of Theorem~\ref{HeightTheorem}}
\label{HProof}

\subsubsection{A-Priori Bounds for $\mu_{n,k}$}

For the analysis of the profile around the height level, we need precise information
about $\mu_{n,k}$ with $n\to\infty$ when $k$ close to $n$.  This is captured in the following lemma, which
first appeared in a similar form in \cite{magnerknesslszpa2014}.

We consider $\mu_{n,k}$ where $k$ is close to $n$, so we set $k = n-\ell$ 
and represent it as
\[
\mu_{n,k} = \mu_{n,n-\ell} = n! C_*(p) p^{(n-\ell)(n-\ell+1)^2/2}q^{n-\ell} \xi_{\ell}(n),
\]
where  
\[
        C_*(p) = \prod_{j=2}^\infty (1-p^j - q^j)^{-1} \cdot (1 + (q/p)^{j-2}),
\]
$\xi_1(1) = 1/C_*(p)$ and for $n > \ell \ge 1$
\begin{equation}\label{eqxirec}
        \xi_{\ell}(n)(1-p^n-q^n) = 
     \sum_{J=1}^\ell \frac{\xi_{\ell+1-J}(n-J)}{J!}q^{-1}p^{\ell-n}(p^{n-J}q^{J} + p^{J}q^{n-J}).
\end{equation}
Note that $\xi_\ell(n) = 0$ for $n\le \ell$.  The above formulas were first derived in \cite{magnerknesslszpa2014}.

\begin{lemma}[Asymptotics for $\mu_{n,k}$, $k\to\infty$ and $n$ near $k$]
    \label{KnesslLemma}
    \label{KNESSLLEMMA}         
    \label{mu_mjUpperBoundLemma}
    Let $p \geq q$.  
\parsec    
{\rm (i)} {\rm Precise estimate: }
    For every fixed $\ell \ge 1$ and $n\to\infty$ 
    \begin{eqnarray*}
        \mu_{n,n-\ell} \sim n! C_*(p) p^{(n-\ell)^2/2 + (n-\ell)/2}q^{n-\ell} \xi_{\ell},
    \end{eqnarray*}
    where the sequence $\xi_\ell$, $\ell\ge 1$ satisfies the recurrence
    \begin{equation}\label{eqexrec2}
      \xi_\ell
        = q^{-1}p^{\ell} \sum_{J=1}^\ell \frac{\xi_{\ell+1-J}}{J!} (q/p)^{J}
     \end{equation}
    with $\xi_1 = 1$. Furthermore we have (for some positive constant $C$)
 \begin{equation}\label{eqxiapprox}
|\xi_{\ell+1-J}(n-J)-\xi_{\ell+1-J}| \le C(p^{n-\ell-1} + (q/p)^{n-\ell-1})/(\ell-J)!,
\end{equation}
\parsec
{\rm (ii)} {\rm Upper bound: }
    We have $\xi_\ell(n) \le C_1/(\ell-1)!$ for some constant $C_1$ and, thus, for $1\le k< n$
    (and some constant $C$)
    \begin{eqnarray}
        \mu_{n,k}
        \leq C\frac{n!}{(n-k-1)!}p^{(k^2 + k)/2 }q^{k}.
        \label{mu_mjUpperBound}
    \end{eqnarray}
\end{lemma}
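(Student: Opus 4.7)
The plan is to derive a recurrence for $\xi_\ell(n)$ from the main recurrence (\ref{muRecurrence}) on $\mu_{n,k}$, and then analyze both the existence of the limit $\xi_\ell$ and the uniform upper bound by induction on $\ell$. Substituting the ansatz $\mu_{n,n-\ell} = n!\,C_*(p)\,p^{(n-\ell)(n-\ell+1)/2}\,q^{n-\ell}\,\xi_\ell(n)$ into the recurrence, the boundary condition $\mu_{j,k}=0$ for $k\ge j$ forces $\mu_{j,n-\ell-1}$ to vanish unless $j\ge n-\ell$; with $J:=n-j$ (and symmetrically $J:=j$ in the other half of the sum), this restricts the inner sum to $J\in\{1,\ldots,\ell\}$, and the exponent identity $p^{(n-\ell-1)(n-\ell)/2}/p^{(n-\ell)(n-\ell+1)/2} = p^{\ell-n}$ yields (\ref{eqxirec}).

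For part (i), I would induct on $\ell$. The base $\ell=1$ collapses to $\xi_1(n)(1-p^n-q^n) = \xi_1(n-1)(1+(q/p)^{n-2})$, solved by the product $\xi_1(n) = \xi_1(1)\prod_{j=2}^{n}(1+(q/p)^{j-2})/(1-p^j-q^j)$. Since $C_*(p)$ is exactly the corresponding infinite product, $\xi_1=1$ and the geometric tail estimate $|\xi_1(n)-1|=O(p^n+(q/p)^n)$ follow directly. For $\ell\ge 2$, isolating the $J=1$ contribution gives
\[
\xi_\ell(n)(1-p^n-q^n) = \xi_\ell(n-1)\,p^{\ell-1}\bigl(1+(q/p)^{n-2}\bigr) + R_\ell(n),
\]
where $R_\ell(n)$ involves only $\xi_{\ell+1-J}(n-J)$ with $J\ge 2$, i.e., strictly lower levels already controlled by the inductive hypothesis. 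Because the effective multiplier $p^{\ell-1}/(1-p^n-q^n)$ is strictly less than $1$ for $\ell\ge 2$, iterating this inhomogeneous linear recursion in $n$ produces both the limit $\xi_\ell$ satisfying (\ref{eqexrec2}) and the geometric convergence rate (\ref{eqxiapprox}), the factorial denominator $1/(\ell-J)!$ being preserved by tracking constants carefully through the induction.

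For part (ii), I first establish the limit bound $\xi_\ell\le 1/(\ell-1)!$ by induction using (\ref{eqexrec2}). Pulling out the $J=1$ term gives $\xi_\ell(1-p^{\ell-1}) = \sum_{J=2}^{\ell} \xi_{\ell+1-J}\,p^{\ell-J}q^{J-1}/J!$, and after applying the inductive hypothesis together with the identity $\sum_{J=2}^\ell \binom{\ell}{J}p^{\ell-J}q^J = 1-p^\ell-\ell q p^{\ell-1}$, the required bound reduces to the elementary fact $1-p^\ell = q\sum_{i=0}^{\ell-1}p^i \le \ell q$. The uniform bound $\xi_\ell(n)\le C_1/(\ell-1)!$ then follows by combining $\xi_\ell\le 1/(\ell-1)!$ with the error estimate (\ref{eqxiapprox}), handling the finitely many values of $n$ near $\ell$ by direct inspection; the bound on $\mu_{n,k}$ is then immediate from the representation. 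The main obstacle is the mixed character of the recurrence in part (i)---$\xi_\ell(n)$ is coupled not only to lower levels (through $J\ge 2$) but also to $\xi_\ell(n-1)$ through $J=1$---so one must solve a non-autonomous linear recursion in $n$ at each level, and propagate an error with both geometric decay in $n-\ell$ \emph{and} factorial decay $1/(\ell-J)!$ in the level gap so that the next $\ell$-induction step goes through cleanly.
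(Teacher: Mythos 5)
Your proposal is substantially correct and follows a genuinely different organization from the paper's. The paper (both in the abbreviated main-text argument and in the appendix) runs a joint induction over the pair $(n,\ell)$: it first establishes the uniform upper bound $\xi_\ell(n)\le C_1/(\ell-1)!$ directly from the recurrence~(\ref{eqxirec}) by choosing a threshold $\ell_1$ large enough that the resulting binomial sums close, and only afterwards derives the existence of the limits $\xi_\ell$ and the convergence rate~(\ref{eqxiapprox}) by a similar joint induction. You invert that order: you isolate the $J=1$ term in~(\ref{eqxirec}) to obtain a one-level inhomogeneous linear recursion in $n$ whose solution gives both existence of $\xi_\ell$ and the geometric error, and you defer the quantitative bound on $\xi_\ell$ to a separate $\ell$-induction via~(\ref{eqexrec2}). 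Your derivation of~(\ref{eqxirec}) (the restriction to $J\in\{1,\dots,\ell\}$ forced by $\mu_{m,k}=0$ for $k\ge m$, together with the exponent identity) is correct, and your proof that $\xi_\ell\le 1/(\ell-1)!$ via the identity $\sum_{J=2}^\ell\binom{\ell}{J}p^{\ell-J}q^J=1-p^\ell-\ell qp^{\ell-1}$ and the elementary bound $1-p^\ell\le q\ell$ is cleaner and sharper (constant~$1$) than what the paper records, which settles for a possibly large $C_1$.

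Two points deserve attention. First, the multiplier in your level-$\ell$ recursion is $a_n=p^{\ell-1}\bigl(1+(q/p)^{n-2}\bigr)/(1-p^n-q^n)$, not $p^{\ell-1}/(1-p^n-q^n)$; for small $n$ (e.g. $n=3,\ \ell=2$, where $a_3=1/(1-p^3-q^3)>1$) it exceeds $1$. This does not break the argument, since only the asymptotic contraction $a_n\to p^{\ell-1}<1$ is needed for existence of the limit and for the rate, but the claim as written is inaccurate. Second, your scheme is necessarily an intertwined induction: to get the factorial factor $1/(\ell-1)!$ in the error at level $\ell$, you must feed into the inhomogeneous term both the convergence rate~(\ref{eqxiapprox}) and the size bounds $\xi_{\ell'}\le 1/(\ell'-1)!$ (and $\xi_{\ell'}(n')\le C_1/(\ell'-1)!$, needed for the tail contributions $p^{J}q^{n-J}$) from all lower levels $\ell'<\ell$, so parts (i) and (ii) cannot literally be proved in sequence but must advance together level by level. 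You acknowledge this obstacle, but it is worth noting that the paper's joint $(n,\ell)$-induction sidesteps the constant-propagation issue (fixing one threshold and adjusting the constant for the finitely many smaller $\ell$), which is the practical advantage of its ordering.
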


\begin{proof}
    From the recurrence (\ref{eqxirec}) it follows easily that for each 
$\ell\ge 1$ the limit $ \xi_\ell = \lim_{n\to\infty} \xi_\ell(n)$  
exists by (\ref{mu_mjUpperBound}), and in particular 
for $\ell =1$ we have $\xi_1 = 1$. 
   Clearly this limits satisfy the recurrence (\ref{eqexrec2}).

Next we show by induction a uniform upper bound of the form 
$\xi_\ell(n) \le C_1/(\ell-1) !$
The induction step for $n>\ell > \ell_1$ runs as follows (where $C_1$ and  $\ell_1$ is appropriately chosen such
that the upper bound is true for $\ell \le \ell_1$ and that 
$2/(q \ell_1(1-p^{\ell_1}-q^{\ell_1}) \le 1$):
\begin{eqnarray*}
\xi_\ell(n) &\le& \frac{C_1}{1-p^n-q^n} \left( \sum_{J=1}^{\ell} \frac{p^{\ell-J}q^{J-1}}{J!(\ell-J)!} + 
\sum_{J=1}^{\ell} \frac{p^{\ell+J-n}q^{n-J-1}}{J!(\ell-J)!}  \right) \\
&\le& \frac{C_1}{\ell!(1-p^n-q^n)} 
\left( \frac 1{q} \sum_{J=0}^{\ell} {\ell \choose J} p^{\ell-J}q^{J} + 
\frac{(q/p)^{n-\ell}}{q} \sum_{J=0}^{\ell}   {\ell\choose J} p^{J}q^{\ell-J}  \right) \\
&\le& \frac{C_1}{(\ell-1)!} \frac 1{\ell_1(1-p^{\ell_1}-q^{\ell_1})} 
\frac 2q 
\le \frac{C_1}{(\ell-1)!}.
\end{eqnarray*}
In a similar way we obtain the approximation estimate (\ref{eqxiapprox}). We give a full proof in Section~\ref{KnesslLemmaProof}.
\end{proof}

\subsubsection{Upper bound on $H_n$}
Now we set 
\begin{eqnarray}\label{eqkUdef}
k = k_U = \log_{1/p} n + \psi_U(n) = \log_{1/p} n + 
\frac 12\left(1 +\epsilon\right)\log_{p/q}\log n
\end{eqnarray}
just as in (\ref{PsiUDefinition}).
We will first estimate the value of $J_k(n, s)$ (which is defined in (\ref{eq-j}))
for $s = \rho' = - 2\psi(n) + O(1)\in \Z^{-} - 1/2$ (i.e., the set $\{ -3/2, -5/2, ... \}$),
as hinted at in Section~\ref{MainResults}.

\begin{lemma}\label{LeJest}
Suppose that $p>q$, that $\epsilon> 0$, that $k_U$ is given by (\ref{eqkUdef}), and 
that $\rho' = \lfloor \rho \rfloor + \frac 12$, where $\rho = - \log_{p/q} \log n + O(\log\log \log n)$ 
is the solution of the equation
\[
 \frac{ (p/q)^\rho \log(p/q)}{\log(1/p)} \log_{1/p}n+ \psi_U(n) +\rho = 0.
\]
Then we have for $k \ge k_U$
\begin{eqnarray}
    J_{k}(n, \rho') = O\left( T(\rho')^{k-k_U} p^{ \epsilon 
(\log_{p/q} \log n)^2/2 + O(\log\log n \cdot \log\log \log n) }   \right).
    \label{anm1}
\end{eqnarray}
\end{lemma}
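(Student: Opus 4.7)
The plan is to convert the pre-analysis sketched in equations (\ref{eqjthterm})--(\ref{HeightMaxContribution}) into a rigorous term-by-term bound. I would start from the representation
\[
J_k(n,\rho') = \sum_{j=0}^k n^{-\rho'} T(\rho')^{k-j} \sum_{m \geq j} T(-m)\bigl(\mu_{m,j}-\mu_{m,j-1}\bigr) \frac{\Gamma(m+\rho')}{\Gamma(m+1)}
\]
and bound the inner sum over $m$ using Lemma~\ref{KnesslLemma}(ii): the estimate $\mu_{m,j} \le C\, m!/(m-j-1)!\, p^{(j^2+j)/2} q^{j}$ transfers up to a constant to the increment $\mu_{m,j}-\mu_{m,j-1}$, and combined with $\Gamma(m+\rho')/\Gamma(m+1) = O(m^{\rho'-1})$ and the factor $T(-m) = p^m + q^m$ the tail in $m$ collapses to a geometric series dominated by its first few terms $m = j + O(1)$. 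The half-integer choice $\rho' \in \Z - \tfrac12$ keeps these $\Gamma$-quotients away from poles and makes the estimate uniform in $j$. This yields the $j$th-term estimate $n^{-\rho'} T(\rho')^{k-j} p^{j^2/2 + O(j\log j)}$ of (\ref{eqjthterm}).

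Next I would optimize over $j$. The exponent in $j$ is maximized at $j_0 = -\log_{1/p} T(\rho')$, and summing the subexponential tails on either side of $j_0$ gives (\ref{eqJk*bound}):
\[
J_k(n,\rho') = O\bigl(n^{-\rho'} T(\rho')^k \, p^{-j_0^2/2 + O(j_0 \log j_0)}\bigr).
\]
Using the expansion $\log_{1/p} T(\rho) = \rho + (p/q)^\rho / \log(1/p) + O((p/q)^{2\rho})$ together with the choice of $\rho'$ in the statement gives $j_0 = -\rho' + O(1)$. Substituting $k = k_U$ and this $\rho'$ then reproduces the computation leading from (\ref{MyEqn}) to (\ref{HeightMaxContribution0}) and yields the bound at $k = k_U$. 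For general $k \geq k_U$ the only additional $k$-dependence at the optimal $j = j_0$ is through the prefactor $T(\rho')^{k-j_0}$, contributing precisely the extra $T(\rho')^{k-k_U}$ appearing in (\ref{anm1}).

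The main obstacle, in my view, is making the collapse of the inner sum over $m$ uniform as $\rho' \to -\infty$ with $n$. Since $\mu_{m,j}$ grows super-exponentially in $m$ via $m!/(m-j-1)!$ while $\Gamma(m+\rho')/\Gamma(m+1)$ decays only polynomially, the geometric damping that concentrates the sum near $m = j$ comes entirely from the factor $T(-m) = p^m + q^m$. Verifying carefully that this geometric factor overwhelms the factorial growth uniformly in $j$, and that the implicit $O(j\log j)$ error terms survive all the way to the $(\log_{p/q}\log n)^2$-scale exponent without degrading the coefficient $\epsilon/2$, is the delicate step; the remainder of the argument is essentially bookkeeping that mirrors Section~\ref{HeightSketch}.
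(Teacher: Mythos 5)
Your plan is correct and matches the paper's own proof: estimate the $j$th term of $J_k(n,\rho')$ via Lemma~\ref{KnesslLemma}(ii), optimize over $j$ at $j_0=-\log_{1/p}T(\rho')$, and invoke the pre-analysis (\ref{MyEqn})--(\ref{HeightMaxContribution0}). The paper in fact sidesteps the estimate $\Gamma(m+\rho')/\Gamma(m+1)=O(m^{\rho'-1})$ entirely: since $\rho'$ is a negative half-integer with $|\rho'|\to\infty$, the flat bound $|\Gamma(m+\rho')/\Gamma(m+1)|\le 1$ holds for every $m\ge 0$, and combined with $\mu_{m,j}=O(m^{j+1}p^{j^2/2+O(j)})$ from (\ref{mu_mjUpperBound}) the inner sum is $\sum_{m}p^m m^{j+1}=O((j+1)!/q^{j+2})=p^{-O(j\log j)}$ by a one-line estimate, with no need to track where the $m$-sum concentrates.

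That said, the ``main obstacle'' you describe in your last paragraph does not exist, and your worry rests on a misreading of Lemma~\ref{KnesslLemma}(ii). For fixed $j$, the factor $m!/(m-j-1)!=m(m-1)\cdots(m-j)$ is a polynomial of degree $j+1$ in $m$, not super-exponential --- the cancellation against $(m-j-1)!$ is precisely the point of the bound. So $T(-m)\approx p^m$ is damping a polynomial, not racing a super-exponential: the sum $\sum_{m\ge j}p^m m^{j+1}$ converges trivially and uniformly in $j$, and the $O(j\log j)$ in the final exponent comes simply from Stirling applied to the resulting $(j+1)!$, not from any delicate balancing act. (A smaller point: with the paper's flat $\Gamma$-bound the $m$-sum peaks at $m\approx(j+1)/\log(1/p)=\Theta(j)$ rather than $m=j+O(1)$, since $\log(1/p)<1$ when $p>1/2$; this makes no difference at $O(j\log j)$ precision. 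Your extra $m^{\rho'-1}$ factor does push the peak to $m=j+1$ for the dominant $j\le j_0$, which is why your version of the heuristic nevertheless lands on the right answer.)
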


\begin{proof}
First we observe that the assumption $\rho' \in \Z^{-} - 1/2$  with $|\rho'| \to \infty$ assures that
for all $m\ge 0$ we have $\left| \Gamma(m+\rho')/\Gamma(m+1) \right| \leq 1$.
Next by (\ref{mu_mjUpperBound}) of Lemma~\ref{mu_mjUpperBoundLemma}
we have $\mu_{m,j} = O\left( m^{j+1} p^{j^2/2 + O(j)}\right)$
which implies that 
\[
\sum_{m\ge j} T(-m) \mu_{m,j} = O\left( p^{j^2/2 + O(j\log j)} \right).
\]
Hence, the $j$th term in the representation  (\ref{eq-j}) of  $J_k(n, \rho')$ can be estimated by
\begin{eqnarray}
&& \left| n^{-\rho'}T(\rho')^{k-j} 
\sum_{m \geq j} T(-m)(\mu_{m,j} - \mu_{m,j-1})
\frac{\Gamma(m+\rho')}{\Gamma(m+1)}\right| \label{eqfirstesti}\\
&&\le n^{-\rho'}T(\rho')^{k-j} 
\sum_{m \geq j} T(-m)(\mu_{m,j} +\mu_{m,j-1}) 
= O\left(n^{-\rho'}T(\rho')^{k-j}  p^{j^2/2 + O(j\log j)}   \right).  \nonumber 
\end{eqnarray}
Thus, we have shown (\ref{eqJk*bound}) which implies (\ref{anm1}) 
for $k = k_U$ (see (\ref{HeightMaxContribution0})).
However, it is easy to extend it to larger $k$ (since equation (\ref{MyEqn})
holds for generic $k_* = k$ and the given choice of $\rho$).
Actually we get uniformly for $k\ge k_U$
\[
J_k(n, \rho') = O\left( T(\rho')^{k-k_U} 
p^{ \epsilon (\log_{p/q} \log n)^2/2 + O(\log\log n \log\log \log n) }   \right)
\]
for large $n$.
\end{proof}


Our next goal is to evaluate the integral (\ref{InverseMellinIntegral})
and to obtain a bound for $\mu_{n,k}$.
\begin{lemma}\label{LeJest2}
Suppose that $p>q$, that $\epsilon> 0$, and that $k_U$ and $\rho'$ 
are given as in Lemma~\ref{LeJest}.
Then we have (for some $\delta> 0$)
\begin{equation}
\mu_{n,k} = 
  O\left( T(\rho')^{k-k_U} p^{ \epsilon (\log_{p/q} \log n)^2/2 + O((\log\log n)^{1-\delta}) } \right)
  + O(n^{-1+\epsilon})
    \label{mu_nkUBound}
\end{equation}
uniformly for $k \ge k_U$.
\end{lemma}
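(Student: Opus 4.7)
The plan is to pass from the integrand estimate of Lemma~\ref{LeJest} to a full bound on $\mu_{n,k}$ in two moves: first bound $\Po{G}_k(n)$ by integrating the Mellin representation (\ref{InverseMellinIntegral}) along the vertical line $\mathrm{Re}(s) = \rho'$, then transfer to $\mu_{n,k}$ by depoissonization.

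For the first move, I would take the Mellin contour to be $\mathrm{Re}(s) = \rho'$. The choice $\rho' = \lfloor\rho\rfloor + 1/2$ keeps the contour away from the poles of $\Gamma(s+1)$ at the negative integers while remaining inside the fundamental strip $(-k-1,\infty)$ of $\Po{G}_k(z)$, which is fine as soon as $k \geq k_U$ and $n$ is large. On this contour I expect the pointwise bound
\[
|J_k(n,\rho'+it)| \leq |J_k(n,\rho')| \cdot \frac{|\Gamma(\rho'+1+it)|}{\Gamma(\rho'+1)} \cdot (1+|t|)^{O(1)},
\]
since $|T(\rho'+it)|^k \leq T(\rho')^k$ and the infinite $A_k$-sum can be controlled termwise using $|\Gamma(m+\rho'+it)/\Gamma(m+1)| \leq |\Gamma(m+\rho')/\Gamma(m+1)| \cdot |\Gamma(\rho'+1+it)/\Gamma(\rho'+1)|$ (via the factorization $\Gamma(m+s)=(m+s-1)\cdots(s+1)\Gamma(s+1)$ and $|s+j+it|\leq|s+j|(1+|t|)$) combined with the absolute $\mu_{m,j}$-bound of Lemma~\ref{mu_mjUpperBoundLemma}, exactly as in the proof of Lemma~\ref{LeJest}. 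The exponential decay of $|\Gamma(\rho'+1+it)|$ in $|t|$ from Stirling's formula makes $\int_{-\infty}^{\infty}|\Gamma(\rho'+1+it)|(1+|t|)^{O(1)}\,dt$ bounded by an absolute constant times $\Gamma(\rho'+1)$, so Lemma~\ref{LeJest} yields
\[
\Po{G}_k(n) = O\!\left(T(\rho')^{k-k_U}\, p^{\epsilon(\log_{p/q}\log n)^2/2 + O(\log\log n\cdot\log\log\log n)}\right)
\]
uniformly in $k \geq k_U$.

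For the second move, I would apply the depoissonization expansion from Section~\ref{sec:depo},
\[
\mu_{n,k} = \Po{G}_k(n) - \tfrac{n}{2}\Po{G}''_k(n) + O(n^{-1+\epsilon}).
\]
The derivative $\Po{G}''_k(n)$ is represented by the same Mellin integral with an extra factor $s(s+1)n^{-2}$ in the integrand, so running the previous contour argument gives $\tfrac{n}{2}|\Po{G}''_k(n)| = O(|\rho'|^2/n)\cdot|\Po{G}_k(n)|$, a quantity smaller than $|\Po{G}_k(n)|$ by a factor $O(n^{-1}(\log\log n)^2)$ and therefore absorbed into the $O(n^{-1+\epsilon})$ term. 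Combining the two moves, and noting that any exponent of the form $O(\log\log n\cdot\log\log\log n)$ is $O((\log\log n)^{1-\delta})$ for some $\delta>0$, yields (\ref{mu_nkUBound}).

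The main obstacle I anticipate is the uniform control of the infinite series $A_k(\rho'+it)$ on the vertical line: one must ensure that the super-geometric factor $p^{j^2/2}$ (which made the real-axis sum converge in Lemma~\ref{LeJest}) survives unchanged when $s$ is shifted off the real axis, and also that the $t$-dependence factors cleanly out of every term in both the inner $m$-sum and the outer $j$-sum so that a single gamma factor captures all of the $|t|$-decay. The absolute-value factorization of $\Gamma(m+s)$ described above is what makes this clean, and once it is in place the depoissonization step is essentially mechanical.
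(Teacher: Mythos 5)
Your strategy is the right one at the top level (bound the Mellin integral along $\Re(s)=\rho'$, then depoissonize), but the specific pointwise bound you rely on to collapse the whole $t$-integral into a single $\Gamma$ factor is false, and the paper's split into inner and outer contour pieces exists precisely to avoid it.

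You claim
\[
\frac{|\Gamma(m+\rho'+it)|}{\Gamma(m+1)} \leq \frac{|\Gamma(m+\rho')|}{\Gamma(m+1)}\cdot\frac{|\Gamma(\rho'+1+it)|}{|\Gamma(\rho'+1)|},
\]
i.e., that the ratio $|\Gamma(x+it)|/|\Gamma(x)|$ is maximized (over $x=m+\rho'$, $m\ge 1$) at $x=\rho'+1$. In fact the opposite monotonicity holds for $x>0$: by the classical identity $|\Gamma(x+it)|^2 = \Gamma(x)^2\prod_{n\ge0}\bigl(1+t^2/(x+n)^2\bigr)^{-1}$, each factor increases toward $1$ as $x$ grows, so $|\Gamma(x+it)|/\Gamma(x)$ is \emph{increasing} in $x$ and tends to $1$. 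Concretely, for $m$ large the left side stays $\Theta(1)$ for all $|t| \lesssim \sqrt{m+\rho'}$, while the right side has already decayed like $e^{-\pi|t|/2}$. The factorization route you sketch doesn't rescue this: $\Gamma(m+s) = \prod_{j=1}^{m-1}(s+j)\,\Gamma(s+1)$ gives $\prod_{j=1}^{m-1}|\rho'+j+it|$, and even granting $|\rho'+j+it| \le C|\rho'+j|(1+|t|)$ for each $j$, the product produces a $(1+|t|)^{m-1}$ factor, not $(1+|t|)^{O(1)}$ — the exponent is unbounded over the $m$-sum in $A_k$. There is also a second, independent problem: $\int_{\mathbb{R}}|\Gamma(\rho'+1+it)|\,dt$ is not $O(|\Gamma(\rho'+1)|)$ uniformly in $\rho'$, because $|\Gamma(\rho'+1)|$ decays super-exponentially in $|\rho'|$ while the tail of the integral (say over $|t|\ge 1$) stays bounded below by a constant.

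The paper's proof does use the (correct) uniform inequality $|\Gamma(m+\rho'+it)| \le |\Gamma(m+\rho')|$, but this gives no decay in $t$, so it is only applied on a finite inner contour $\Co^I=\{|t|\le e^{(\log\log n)^{2-\delta}}\}$, paid for by multiplying by the length of $\Co^I$. On the outer piece $\Co^O$ the argument is genuinely different: one uses the Stirling decay $|\Gamma(\rho'+it)|\le C|t|^{\rho'-1/2}e^{-\pi|t|/2}$ \emph{together with} the crude bound $|\mu_{m,j}-\mu_{m,j-1}|\le m$ (not the sharp $p^{j^2/2}$-type bound), which turns the inner $m$-sum into $O(e^{p|t|})$; since $p<\pi/2$ this is killed by the $e^{-\pi|t|/2}$ decay, and the inner radius is chosen large enough (super-poly-log) to absorb the factor $n^{-\rho'}\sum_j T(\rho')^{k-j}=e^{O(\log n\log\log n)}$. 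Without a contour split of this kind, you cannot control the $A_k$-series uniformly in $t$ with a single gamma factor, and that is the missing idea here. Your depoissonization step and your heuristic about $\Po{G}_k''$ being down by $|\rho'|^2/n$ are fine and match the paper.
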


\begin{proof}
Letting $\Co$ denote the vertical line $\Re(s) = \rho'$, we 
evaluate the integral (\ref{InverseMellinIntegral}) 
by splitting it into an inner region $\Co^I$
and outer tails $\Co^O$:
\[
    \Co^I = \{\rho' + it :~ |t| \leq e^{(\log\log n)^{2-\delta}}\}, \quad
    \Co^O = \{\rho' + it :~ |t| > e^{(\log\log n)^{2-\delta}}\},
\]
where $0<\delta < 1$ is some fixed real number.  The inner region we evaluate by showing that it is of the same order as
the integrand on the real axis, and the outer tails are shown to be negligible by the exponential decay of the $\Gamma$ function.

It is easily checked that 
$|n^{-s}T(s)^{k-j}\Gamma(m+s)| \leq n^{-\rho'}T(\rho')^{k-j} |\Gamma(m+\rho')|$
when $\Re(s) = \rho'$ (and any value for $\Im(s)$).  Thus,
\begin{eqnarray*}
    |J_k(n, s)|
    \leq  T(\rho')^{k-k_U}  \sum_{j=0}^k n^{-\rho'}T(\rho')^{k_U-j}\sum_{m\geq j} T(-m)|\mu_{m,j} - \mu_{m,j-1}| \frac{|\Gamma(m+\rho')|}{\Gamma(m+1)},
\end{eqnarray*}
which can be upper bounded as in the proof of Lemma~\ref{LeJest}.  Multiplying by the length of the contour, we  find
\begin{eqnarray*}
    \left|\int_{\Co^I} J_k(n, s)\dee{s}\right|
    = O\left( T(\rho')^{k-k_U} p^{ \epsilon (\log_{p/q} \log n)^2/2 + O((\log\log n)^{2-\delta}) } \right).
\end{eqnarray*}


We use the following standard bound on the $\Gamma$ function: for $s = \rho' + it$, provided that $|\Arg(s)|$ is less
than and bounded away from $\pi$ and $|s|$ is sufficiently large, we have
\[
    |\Gamma(s)| 
    \leq C|t|^{\rho' - 1/2} e^{-\pi|t|/2}.
\]
This is applicable on $\Co^O$, and we again use the fact that $|T(s)| \leq T(\rho')$ and $|\mu_{m,j} - \mu_{m,j-1}| \leq m$,
which yields an upper bound of the form
\begin{eqnarray*}
\left| \sum_{m \geq j} T(-m)(\mu_{m,j} - \mu_{m,j-1})  \frac{\Gamma(m+s)}{\Gamma(m+1)}  \right| 
& =& O\left( \sum_{m \geq j} T(-m) m \frac{|t|^{m+\rho' - 1/2}e^{-\pi|t|/2})}{\Gamma(m+1)} \right) \\
&=& O\left( p|t|^{\rho' + 1/2}e^{-\pi|t|/2} e^{p|t|}   \right),
\end{eqnarray*}
where we have used the inequality
\begin{eqnarray*}
    |t|^{\rho' - 1/2}e^{-\pi|t|/2} \sum_{m \geq j} \frac{m(p|t|)^{m}}{m!}
    \leq p|t|^{\rho' + 1/2}e^{-\pi|t|/2} e^{p|t|} = e^{-\Theta(|t|)},
\end{eqnarray*}
uniformly in $j$, by our choice of $|t|$.

Furthermore, since $T(\rho')< 1$ we have
\[
	\sum_{j=0}^k n^{-\rho'}T(\rho')^{k-j} = O(n^{-\rho'}) 
    = e^{O(\log n \log\log n)}.
\]
Hence, integrating this on $\Co^O$ gives
\begin{eqnarray*}
    \left| \int_{\Co^O} J_k(n, s) \dee{s} \right|
    &=& O\left( T(\rho')^{k-k_U}  e^{O(\log n \log \log n)}  e^{-\Theta( e^{(\log\log n)^{2-\delta}})} \right)\\
& =&  O\left( T(\rho')^{k-k_U} e^{-\Theta( e^{(\log\log n)^{2-\delta}})} \right).
\end{eqnarray*}
Adding these together gives
\begin{eqnarray*}
    \Po{G}_{k}(n)
    &\leq &\left| \int_{\Co^I} J_{k_U}(n, s)\dee{s} + \int_{\Co^O} J_{k_U}(n, s)\dee{s} \right| \\
    & = & O\left( T(\rho')^{k-k_U} p^{ \epsilon (\log_{p/q} \log n)^2/2 + O((\log\log n)^{2-\delta}) } \right).
\end{eqnarray*}
Similarly we get a bound for $\Po{G''}_{k}(n)$:
\begin{eqnarray*}
    \Po{G''}_{k}(n) = 
     O\left( {\rho'}^2 T(\rho')^{k-k_U} p^{ \epsilon (\log_{p/q} \log n)^2/2 + O((\log\log n)^{2-\delta}) } \right).
\end{eqnarray*}
Hence by depoissonization (see (\ref{eqDep3.3}) from Section~\ref{sec:depo}) we get
\[
\mu_{n,k} = O\left( T(\rho')^{k-k_U} p^{ \epsilon (\log_{p/q} \log n)^2/2 + O((\log\log n)^{2-\delta}) } \right)
+ O(n^{-1+\epsilon})
\]
as needed.
\end{proof}

Our original goal was to bound the tail $\Pr[H_n > k_U]$ by the following sum
which we split into two parts:
\[
    \Pr[H_n > k_U] \le  \sum_{k \geq k_U} \mu_{n,k}
    = \sum_{k = k_U}^{\ceil{(\log n)^2}} \mu_{n,k} + \sum_{k=\ceil{(\log n)^2} + 1}^n \mu_{n,k}.
\]
The initial part can be bounded using (\ref{mu_nkUBound}), and the final part we handle using (\ref{mu_mjUpperBound}) in 
Lemma~\ref{mu_mjUpperBoundLemma}.     
Indeed, since $T(\rho') < 1$ the first sum can be bounded by
\[
    \sum_{k=k_U}^{\ceil{(\log n)^2}}\mu_{n,k}
    \leq
    e^{-\Theta(\epsilon(\log\log n)^2)} .
\]
The second sum is at most
\[
    \sum_{k=\ceil{(\log n)^2}+1}^{n}  \mu_{n,k} 
    \leq ne^{-\Theta((\log n)^4)} \\
    =  e^{-\Theta((\log n)^4)}.
\]
Adding these upper bounds together shows that
$
    \Pr[H_n > k_U] = e^{-\Theta(\epsilon(\log\log n)^2)} \to 0,
$
as desired.

\subsubsection{Upper bound on the variance of the profile}

We consider now the case
\begin{equation}\label{eqkLdef}
k = k_L = \log_{1/p} n + \psi_L(n) = \log_{1/p} n + \psi(n), \ \ \ \
\psi(n)=\frac 12 \left(1 -\epsilon \right) \log_{p/q}\log n
\end{equation}
and start with an upper bound for the variance of the profile $\Var[B_{n,k}]$.
\begin{lemma}\label{VarianceLemma}
Suppose that $p>q$, that $\epsilon> 0$, and that $k_L$ is given by (\ref{eqkLdef}).
Then we have 
\begin{eqnarray}
\Var[B_{n,k}] &= 
  O\left( p^{-\epsilon (\log_{p/q} \log n)^2/2 + O((\log\log n)^{2-\delta}) } \right).
    \label{VarB_nkUBound}
\end{eqnarray}
\end{lemma}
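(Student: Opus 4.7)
The plan is to parallel the Mellin--Barnes argument developed for the mean in Lemmas~\ref{LeJest} and \ref{LeJest2}, now applied to the (Poissonized) variance of the profile. First, I would derive a recurrence for the second factorial moment $M_{n,k}=\E[B_{n,k}(B_{n,k}-1)]$ by conditioning on the size $j$ of the first conclusive split; because the two resulting subtrees are conditionally independent, expansion of $(B_{j,k-1}+B_{n-j,k-1})^2$ yields
\begin{eqnarray*}
M_{n,k}=(p^n+q^n)M_{n,k}+\sum_{j=1}^{n-1}\binom{n}{j}p^j q^{n-j}\bigl[M_{j,k-1}+M_{n-j,k-1}+2\mu_{j,k-1}\mu_{n-j,k-1}\bigr],
\end{eqnarray*}
which, combined with (\ref{muRecurrence}), gives a recurrence for $V_{n,k}=\Var[B_{n,k}]$. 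After Poissonizing as in (\ref{pg1}), the Poisson transform $\Po{V}_k(z)$ satisfies a functional equation of the form
\begin{eqnarray*}
\Po{V}_k(z)=\Po{V}_{k-1}(pz)+\Po{V}_{k-1}(qz)+2\Po{G}_{k-1}(pz)\Po{G}_{k-1}(qz)+\Po{W}_{k,V}(z),
\end{eqnarray*}
with $\Po{W}_{k,V}(z)$ an inconclusive-query correction entirely analogous to $\Po{W}_{k,G}(z)$.

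Second, I would apply the Mellin transform. The linear part of the recurrence again produces the factor $T(s)^k$, while the cross term $2\Po{G}_{k-1}(pz)\Po{G}_{k-1}(qz)$ yields, via the standard Mellin convolution property, a self-convolution of $\Me{G_{k-1}}$ weighted by $(pq)^{-s}$. Collecting contributions, $\Me{V_k}(s)$ takes the algebraic form $\Gamma(s+1) B_k(s) T(s)^k$, where $B_k(s)$ is entire and exhibits the same zero structure at negative integers in $[-k,-1]$ as $A_k(s)$ in (\ref{A_kFormula}). Using Lemma~\ref{mu_mjUpperBoundLemma}(ii) together with inductive control of $V_{m,j}$ below the threshold, I would bound the coefficients of $B_k(s)$ termwise by the same orders appearing in the proof of Lemma~\ref{LeJest}.

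Third, I would invert the Mellin transform along the same shifted-half-integer contour $\Re s=\rho'=\lfloor-\log_{p/q}\log n\rfloor+\tfrac12$ used in Lemma~\ref{LeJest}, splitting it into an inner part $|\Im s|\le e^{(\log\log n)^{2-\delta}}$ and outer tails. The outer tails are suppressed by the exponential decay of $|\Gamma(s)|$ along vertical lines exactly as in Lemma~\ref{LeJest2}; the inner part is controlled by its value on the real axis at $\rho'$, and substituting $k=k_L$ in place of $k_U$ in the optimization culminating in (\ref{MyEqn}) flips the sign of the $\epsilon$-dependent term, producing the claimed bound $p^{-\epsilon(\log_{p/q}\log n)^2/2+O((\log\log n)^{2-\delta})}$. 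A Poissonized-variance depoissonization (cf.~\cite{jacquetszpa1997}) then transfers this estimate from $\Po{V}_k(n)$ to $V_{n,k}$, the $O(n^{-1+\epsilon})$ correction being dominated by the main term.

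The main obstacle will be controlling the Mellin convolution arising from the cross term $2\Po{G}_{k-1}(pz)\Po{G}_{k-1}(qz)$, which has no analogue in the mean analysis. I need to show that after Mellin transformation this convolution does not introduce new dominant saddle points on the contour $\Re s=\rho'$. The expectation is that, because $\Me{G_{k-1}}(s)$ is polynomially bounded along vertical strips inside the fundamental region, the convolution integral is absorbed into $B_k(s)$ without disturbing the $T(s)^k$-scaling; once this is verified, the remainder of the argument is a direct transcription of the calculation for $\E[B_{n,k_U}]$ with the replacement $\epsilon\mapsto-\epsilon$ in the $\psi$-correction.
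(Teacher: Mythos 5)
Your architectural plan---a recurrence for the second factorial moment, a Poissonized functional equation, Mellin inversion along the shifted-half-integer contour with an inner/outer split, and then depoissonization---is the one the paper follows. But there is a real gap hiding in your second step, concerning precisely the object whose Poisson transform you take.

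If you Poissonize $\Var[B_{n,k}]$ directly, as your notation suggests, its Poisson transform does \emph{not} satisfy your proposed functional equation, because the Poisson transform of $\mu_{n,k}^2$ is not $\tilde G_k(z)^2$. The object that behaves cleanly is the \emph{Poisson variance} $\tilde V_k(z) := \sum_{m\ge 0} \E[B_{m,k}^2]\frac{z^m}{m!}e^{-z} - \tilde G_k(z)^2$ (see Section~\ref{sec:depo}). Once you work with this, your worry about the cross term $2\tilde G_{k-1}(pz)\tilde G_{k-1}(qz)$ evaporates: writing $\tilde G_k(z) = \tilde G_{k-1}(pz)+\tilde G_{k-1}(qz)+\tilde W_{k,G}(z)$ and expanding $\tilde G_k(z)^2$ cancels that cross term exactly, so $\tilde V_k$ satisfies the \emph{homogeneous} iteration $\tilde V_k(z) = \tilde V_{k-1}(pz)+\tilde V_{k-1}(qz)+\tilde W_{k,V}(z)$, with all convolution contributions confined to the inhomogeneous correction $\tilde W_{k,V}$. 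This is why the Mellin transform takes the product form $\Gamma(s+1)B_k(s)T(s)^k$ with no new saddle structure coming from a ``convolution integral''---the obstacle you flagged has already been removed at the functional-equation level, not deferred to the Mellin side.

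Two more ingredients are needed to close the argument. First, what actually appears in $\Me{W_{j,V}}(s)$ are the second factorial moments $c_{m,j}=\E[B_{m,j}(B_{m,j}-1)]$ and products $\mu_{\ell,j}\mu_{m-\ell,j}$, not variances, so ``inductive control of $V_{m,j}$'' is the wrong target; you need the concrete boundary estimate $c_{m,j}\le \frac{m!}{(m-j-1)!}p^{j^2/2+O(j)}$, proved by induction in the same style as Lemma~\ref{mu_mjUpperBoundLemma}(ii). Second, the depoissonization step cannot be the scalar transfer of (\ref{eqDep3.3}): because the Poisson variance is not the Poisson transform of $\Var[B_{n,k}]$, you must use the corrected formula (\ref{eqVarest}), which involves $\tilde G_k'$, $\tilde G_k''$, and $\tilde V_k''$ and accounts for the mismatch. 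Without these, the plan does not actually deliver (\ref{VarB_nkUBound}).
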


\begin{proof}
The proof technique here is the same as for the proof of the upper bound on $\mu_{n,k}$.
    Our goal is to upper bound the expression
    \[
        \Po{V}_k(n) = \sum_{n\ge 0} \mathbb{E}[B_{n,k}^2] \frac{n^n}{n!}e^{-n} - \tilde G_k(n)^2 
        = \frac{1}{2\pi i} \int_{\rho' - i\infty}^{\rho' + i\infty} J^{(V)}_k(n, s)\dee{s},
    \]
    where 
    \begin{eqnarray*}
        J^{(V)}_k(n, s)
        = n^{-s}T(s)^k\Gamma(s+1)B_k(s), 
    \end{eqnarray*}
    and
    \begin{eqnarray*}
        B_k(s)
        = 1 - (s+1) 2^{-(s+2)} + \sum_{j=1}^k T(s)^{-j} \frac{\Me{W_{j,V}}(s)}{\Gamma(s+1)},
    \end{eqnarray*}
    with \cite{magnerPhD2015}
    \begin{eqnarray*}
        \Me{W_{j,V}}(s)
        &=& \sum_{m \geq j} \frac{\Gamma(m+s)}{m!} \left[  \right.
        \left.T(-m) ( c_{m,j} - c_{m,j-1} + \mu_{m,j} - \mu_{m,j-1}) \right.\\
            &&\left. + T(s)2^{-(s+m)} \sum_{\ell=0}^m \mu_{\ell,j-1}\mu_{m-\ell,j-1} \right. \\
            &&\left.+ 2\sum_{\ell=0}^m \mu_{\ell,j-1} \mu_{m-\ell,j-1} p^\ell q^{m-\ell}
            - 2^{-(m+s)} \sum_{\ell=0}^m \mu_{\ell,j}\mu_{m-\ell,j}
        \right] .
    \end{eqnarray*}
    As above we need a bound on the moments of $B_{m,j}$ for $m$ sufficiently
    close to $j$: for $\mu_{m,j} = \E[B_{m,j}]$, this is (\ref{mu_mjUpperBound}) in Lemma~\ref{mu_mjUpperBoundLemma}.  It turns out that $c_{m,j} = \E[B_{m,j}(B_{m,j}-1)]$ 
    satisfies a similar recurrence as $\mu_{m,j}$ (see \cite{magnerspa2015}) 
and also similar inequality: for $j\to\infty$ and $m> j$,
    \begin{eqnarray*}
        c_{m,j} \leq \frac{m!}{(m-j-1)!} p^{j^2/2 + O(j)}.
    \end{eqnarray*}
    The proof is by induction and follows along the same lines as that of the upper bound in Lemma~\ref{mu_mjUpperBoundLemma}.
    Using this, we can upper bound the inverse Mellin integral as in the upper bound for $\tilde G_k(n)$.
In particular it follows that 
\[
\tilde V_{k_L}(n) = O\left(  p^{ -\epsilon(\log_{p/q} \log n)^2/2 + O((\log\log n)^{2-\delta}) }       \right)
\]
and similarly we have 
\[
\tilde V_{k_L}''(n) = O\left( {\rho'}^2 n^{-2} p^{ -\epsilon(\log_{p/q} \log n)^2 + O((\log\log n)^{2-\delta})  }  \right),
\]
where $\rho'=-\log_{p/q}\log n + O(\log\log\log n)$.
With the help of depoissonization, see (\ref{eqVarest}), we thus obtain (\ref{VarB_nkUBound}).
\end{proof}

\subsubsection{ Lower bound on $H_n$ }
\label{H_nLowerBoundSection}

The most difficult part of the proof of Theorem~\ref{HeightTheorem} is to prove a lower bound for
the expected profile.
\begin{lemma}\label{LemunkLbound}
Suppose that $p>q$, that $\epsilon> 0$, and that $k_L$ is given by (\ref{eqkLdef}).
Then we have 
\begin{eqnarray}
\mu_{n,k_L} = \Omega\left( p^{ -\epsilon(\log_{p/q} \log n)^2/2 + 
O(\log\log n \log\log \log n) }   \right). 
    \label{eqEB_nkLBound}
\end{eqnarray}
\end{lemma}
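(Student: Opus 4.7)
The plan is to apply a saddle-point analysis to the inverse Mellin representation
\[
    \Po{G}_{k_L}(n) = \frac{1}{2\pi i}\int_{\rho_0 - i\infty}^{\rho_0 + i\infty} J_{k_L}(n,s)\,ds,
\]
choosing the contour to pass through the real saddle $\rho_0 = -\log_{p/q}\log n + O(\log\log\log n)$ already identified by the pre-analysis of Section~\ref{HeightSketch}. Writing $\log J_{k_L}(n,s) = -s\log n + k_L\log T(s) + \log\Gamma(s+1) + \log A_{k_L}(s)$, differentiation and the expansion $T(s)/p^{-s} = 1 + (p/q)^{s}$ recover (to leading order) the saddle equation
\[
    \frac{(p/q)^{\rho_0}\log(p/q)}{\log(1/p)}\log_{1/p} n - \psi_L(n) - \rho_0 = 0.
\]
A second differentiation shows that the effective Gaussian width is $\sigma \sim 1/\sqrt{\log\log n}$, so that on an inner segment $\Co^I = \{\rho_0 + it : |t| \le \sigma(\log\log n)^{1/3}\}$ a standard quadratic expansion yields
\[
    \frac{1}{2\pi i}\int_{\Co^I} J_{k_L}(n,s)\,ds = \frac{J_{k_L}(n,\rho_0)}{\sqrt{2\pi/\sigma^2}}\,(1 + o(1)),
\]
while on the complementary outer tails the exponential $\Gamma$-decay used in Lemma~\ref{LeJest2} makes the contribution super-polynomially negligible.

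What remains is a matching lower bound on $|J_{k_L}(n,\rho_0)| = n^{-\rho_0} T(\rho_0)^{k_L} |\Gamma(\rho_0+1)\,A_{k_L}(\rho_0)|$. The factors $n^{-\rho_0}$, $T(\rho_0)^{k_L}$ and $\Gamma(\rho_0+1)$ are explicit, so the crux is to bound $|A_{k_L}(\rho_0)|$ from below. The a priori upper estimate used in Section~\ref{HProof} was obtained term-by-term from the definition (\ref{A_kFormula}) and would over-estimate by any amount if the terms cancel. I will show that the dominating $j$-term of (\ref{A_kFormula}), corresponding to $j$ near $-\log_{1/p}T(\rho_0)$, is not swamped by the remaining terms. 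This requires two ingredients: first, the precise asymptotic $\mu_{m,j}\sim m!\,C_*(p)\,p^{(m-\ell)^2/2+(m-\ell)/2}q^{m-\ell}\xi_\ell$ of Lemma~\ref{KnesslLemma}(i) in the regime $m = j+\ell$ where the $j$-term concentrates; second, the surprising series identity proved in Section~\ref{secmiracle}, which collapses what would otherwise look like a large alternating sum into a quantity demonstrably of the expected order $p^{-\rho_0^2/2 + O(|\rho_0|\log|\rho_0|)}$.

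Assembling these three pieces gives
\[
    \Po{G}_{k_L}(n) = \Omega\!\left( p^{-\epsilon(\log_{p/q}\log n)^2/2 + O(\log\log n \cdot \log\log\log n)}\right),
\]
matching the heuristic of Section~\ref{HeightSketch}. Depoissonization through the formula $\mu_{n,k_L} = \Po{G}_{k_L}(n) - (n/2)\Po{G}''_{k_L}(n) + O(n^{-1+\epsilon})$ transfers this bound to $\mu_{n,k_L}$: as in the upper-bound analysis, the $\Po{G}''$ correction carries an extra factor $\rho_0^2/n$, and the $O(n^{-1+\epsilon})$ error is much smaller than the claimed main term. The hard part is the lower bound on $A_{k_L}(\rho_0)$: all the contour manipulations parallel the upper-bound arguments already carried out, but a genuine lower bound on the oscillatory series $A_{k_L}(\rho_0)$ is inaccessible without the structural identity of Section~\ref{secmiracle}.
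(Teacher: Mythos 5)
Your proposal correctly identifies two essential ingredients that the paper uses: the precise asymptotics of $\mu_{m,j}$ from Lemma~\ref{KnesslLemma}(i) and the identity $D(p)=0$ from Section~\ref{secmiracle}. But there are two substantive gaps.

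First, the mechanism of evaluation is different from what you propose, and your version is problematic. You envision a saddle-point evaluation of the contour integral with a Gaussian quadratic expansion of width $\sigma\sim 1/\sqrt{\log\log n}$. That width estimate is not right: the second derivative of the logarithm of the integrand (coming from $k\log T(s)$, $\log\Gamma(s+1)$, and $\log A_k(s)$) is $\Theta(1)$ at $s=\rho_0$, not $\Theta(\log\log n)$, so a ``standard quadratic expansion'' is not obviously applicable; and in any case $A_k(s)$ carries its own oscillatory structure that is not controlled by the quadratic term. The paper sidesteps this entirely: it does not do a Gaussian saddle-point at all, but rather evaluates the integral \emph{exactly} by shifting the contour and collecting residues (Lemma~\ref{Lelower-1}), producing the explicit double sum (\ref{G_kExplicitFormHLowerBound}). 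Your Gaussian argument would need a genuine justification that $J_{k_L}(n,s)$ is well-approximated by $J_{k_L}(n,\rho_0)\exp(-c t^2)$ on the inner segment, which is not supplied and is not in fact how the lower bound is obtained.

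Second, and more seriously, your proposal stops too early. You say the identity $D(p)=0$ ``collapses what would otherwise look like a large alternating sum into a quantity demonstrably of the expected order.'' What actually happens is the opposite of a collapse to something demonstrably positive: $D(p)=0$ removes the \emph{apparently dominant} term of the asymptotic expansion (\ref{eqasymrel}), leaving a smaller correction term whose coefficient $C(p,u,v)$ (a sum of six infinite-series blocks $C_1,C_2,C_{30},C_{32},C_{31,0},\sum_{K\ge1}C_{31,K}$) is \emph{not obviously positive}. The lower bound $\mu_{n,k_L}=\Omega(\cdots)$ then rests entirely on proving $C(p,u,v)>0$ for all $p\in(1/2,1)$, $u\in[\sqrt{q/p},\sqrt{p/q}]$, $v\in[0,1)$. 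The paper devotes Lemma~\ref{Lepto1/2} (asymptotics as $p\to 1/2$), Lemma~\ref{Lepto1} (asymptotics as $p\to 1$), and a numerically verified middle range $0.51\le p\le 0.97$ to this positivity; this is the genuinely hard part of the lemma, and it is absent from your outline. Invoking Section~\ref{secmiracle} is necessary but not sufficient: after it, one still has an explicit but delicate positivity problem to solve.
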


By combining Lemma~\ref{VarianceLemma} and Lemma~\ref{LemunkLbound} it immediately follows that
\[
\Pr[H_n < k_L] \le \frac{\Var[B_{n,k_L}]}{ \mu_{n,k_L}^2}   \to 0
\]
which proves the lower bound on $H_n$.


The plan to prove Lemma~\ref{LemunkLbound} is as follows: 
we evaluate the inverse Mellin integral exactly by a residue computation.  This results in a nested
summation, which we simplify using the binomial theorem and the series of the
exponential function. From this representation we will then detect 
several terms that contribute to the leading term in the asymptotic expansion.

\begin{lemma}\label{Lelower-1}
Suppose that $\rho < 0$ but not an integer. Then we have
\begin{eqnarray}
    \Po{G}_k(n)
    = \sum_{j=0}^k \sum_{m \ge j} \kappa_{m,j}\left( \mu_{m,j}-\mu_{m,j-1}\right),
    \label{G_kExplicitFormHLowerBound}
\end{eqnarray}
where 
\begin{eqnarray}\label{eqkapparep}
    \kappa_{m,j} 
    = \frac{T(-m)n^m}{m!} 
    \sum_{\ell= (-\ceil{m+\rho} + 1) \logicor 0}^{\infty} \frac{(-n)^{\ell}}{\ell!} T(-m-\ell)^{k-j} 
\end{eqnarray}
and $x \logicor y$ denotes the maximum of $x$ and $y$.
\end{lemma}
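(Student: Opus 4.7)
The plan is to evaluate the Mellin--Barnes representation (\ref{MellinInversionFormula}) exactly by residue calculus, one pair $(m,j)$ at a time. Substituting (\ref{A_kFormula}) into $J_k(n,s)=n^{-s}\Gamma(s+1)A_k(s)T(s)^k$ gives
$$J_k(n,s) \;=\; \sum_{j=0}^k \sum_{m\ge j} T(-m)\,(\mu_{m,j}-\mu_{m,j-1})\,\frac{T(s)^{k-j}\,n^{-s}\,\Gamma(m+s)}{\Gamma(m+1)}.$$
First I would push the double sum outside the contour integral $\tfrac{1}{2\pi i}\int_{\rho-i\infty}^{\rho+i\infty}$. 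On $\Re s=\rho$ we have $|n^{-s}|=n^{-\rho}$, $|T(s)^{k-j}|\le T(\rho)^{k-j}$, and $|\Gamma(m+s)|$ decays super-exponentially in $|\Im s|$; combining these with the crude bound $|\mu_{m,j}-\mu_{m,j-1}|\le m$ and the geometric estimate $T(-m)\le 2p^m$ produces a majorant summable uniformly in $s$ on any vertical line, so Fubini applies.

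Second, for each fixed $(m,j)$ I would evaluate
$$I_{m,j} \;:=\; \frac{1}{2\pi i}\int_{\rho-i\infty}^{\rho+i\infty} T(s)^{k-j}\,n^{-s}\,\Gamma(m+s)\,ds$$
by expanding $T(s)^{k-j}=(p^{-s}+q^{-s})^{k-j}$ via the binomial theorem, which reduces the task to computing $\tfrac{1}{2\pi i}\int_\rho x^{-s}\Gamma(m+s)\,ds$ for $x=p^aq^bn>0$. For any $\rho_0>-m$ the classical Mellin pair gives $\tfrac{1}{2\pi i}\int_{\rho_0} x^{-s}\Gamma(m+s)\,ds = x^m e^{-x}$. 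Shifting the contour leftward from $\rho_0$ to $\rho$, the horizontal segments at $\pm i\infty$ vanish by super-exponential decay of $\Gamma$, and one collects the simple poles of $\Gamma(m+s)$ at $s=-m-\ell$ lying in $(\rho,\rho_0)$, each contributing residue $(-1)^\ell x^{m+\ell}/\ell!$. Because $\rho\notin\mathbb{Z}$, the crossed indices are exactly $0\le\ell\le -\lceil m+\rho\rceil$ (an empty range whenever $m+\rho>0$).

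Subtracting the crossed residues from the Taylor expansion $x^m e^{-x}=\sum_{\ell\ge 0}(-1)^\ell x^{m+\ell}/\ell!$ leaves the tail
$$\frac{1}{2\pi i}\int_\rho x^{-s}\Gamma(m+s)\,ds \;=\; \sum_{\ell \,\ge\, (-\lceil m+\rho\rceil+1)\,\lor\,0} \frac{(-1)^\ell\,x^{m+\ell}}{\ell!}.$$
Re-inserting $x=p^aq^bn$ and invoking the binomial identity $\sum_{a+b=k-j}\binom{k-j}{a}p^{a(m+\ell)}q^{b(m+\ell)} = (p^{m+\ell}+q^{m+\ell})^{k-j} = T(-m-\ell)^{k-j}$ gives
$$I_{m,j} \;=\; n^m \sum_{\ell \,\ge\, (-\lceil m+\rho\rceil+1)\,\lor\,0} \frac{(-n)^\ell}{\ell!}\,T(-m-\ell)^{k-j}.$$
Multiplying by the prefactor $T(-m)(\mu_{m,j}-\mu_{m,j-1})/m!$ and summing in $(j,m)$ is exactly (\ref{G_kExplicitFormHLowerBound}) with $\kappa_{m,j}$ as in (\ref{eqkapparep}).

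The main obstacle will be the absolute-convergence bookkeeping that legitimizes both the Fubini swap and the simultaneous leftward contour shift for every $m$ (the number of crossed poles depends on $m$ and grows as $m$ decreases toward $0$). The hypothesis $\rho\notin\mathbb{Z}$ is essential: it guarantees that the distance from $\rho$ to the nearest crossed pole $-m-\ell$ is bounded below, so each individual shift is safe, while the geometric factor $T(-m)\le 2p^m$ keeps the $m$-tail absolutely summable. Convergence of the inner $\ell$-tail defining $\kappa_{m,j}$ is then automatic, since $T(-m-\ell)^{k-j}\le 2^{k-j}p^{(m+\ell)(k-j)}$ decays geometrically in $\ell$ and easily dominates $n^\ell/\ell!$ for any fixed $n$. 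Once these technical points are in place, the remaining manipulations are purely algebraic.
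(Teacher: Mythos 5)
Your proposal is correct and uses the same core technique as the paper's proof: after a Fubini interchange, each $(m,j)$ term of the inverse Mellin integral is evaluated by residue calculus, and both routes land on the same tail-of-Taylor-series expression for $\kappa_{m,j}$. The paper shifts the contour leftward toward $-\infty$, picking up all poles of $\Gamma(m+s)$ to the left of $\rho$ and invoking the super-exponential decay of $\Gamma$ on the lines $\Re s = \tfrac12 - j$ to kill the residual integral, whereas you anchor at a reference abscissa $\rho_0 > -m$ where the closed-form Mellin pair $\tfrac{1}{2\pi i}\int_{\rho_0}x^{-s}\Gamma(m+s)\,ds = x^m e^{-x}$ is known, shift only through the finite strip $(\rho,\rho_0)$, subtract the crossed residues, and match the result against the Taylor series of $x^m e^{-x}$. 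This is a cosmetic variation rather than a genuinely different argument; if anything, your version is slightly cleaner in that it avoids the $\rho'\to-\infty$ limit (so one need not weigh the growth of $n^{-s}T(s)^{k-j}$ against $\Gamma$-decay at arbitrarily negative real parts), and you spell out the absolute-convergence bounds that justify Fubini and the vanishing horizontal segments, which the paper leaves implicit.
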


\begin{proof}
By shifting the line of integration and collecting residues we have
\[
\frac 1{2\pi i} \int_{\rho-i\infty}^{\rho + i\infty} 
n^{-s}T(s)^{k-j} \Gamma(m+s)\, ds = 
\sum_{\ell \ge \max\{0, - m - \rho\} } 
\frac{n^{m + \ell} (-1)^\ell }{\ell!} T(-\ell- m)^{k-j},
\]
where the remaining integral after shifting by any finite amount
becomes $0$ in the limit as a result of the superexponential decay
of the $\Gamma$ function on the points $1/2 - j$ for positive integer $j$.
Hence the lemma follows.
\end{proof}

We now choose $\rho$ as $\rho = -j^*-1$ and set $j_0 = \lfloor j^* + \frac 12 \rfloor$, where
$j^*$ is the root of the equation
\begin{eqnarray}\label{eqj0def}
 (q/p)^{j*}(k_L-j^*) =  \frac {\log(1/p)}{\log(p/q)}(j^* -  \psi_L(n)),
\end{eqnarray}
where $\psi_L(n) = \frac 12 \left( 1 - \epsilon\right) \log_{p/q}\log n$.

In particular, let us define
\begin{eqnarray}
	\overline r_0 := (q/p)^{j_0} (k_L - j_0), && \overline r_1 := \frac{\log(1/p)}{\log(p/q)}(j_0 - \psi_L(n)).
\end{eqnarray}
Then it follows that
\begin{eqnarray}\label{eqj0def-app1}
 \sqrt{q/p} \overline r_1
 \le \overline r_0
 \le \sqrt{p/q} \overline r_1.
\end{eqnarray}
If $j> j_0$ and $m\ge j$ then we certainly have $(-\ceil{m+\rho} + 1) 
\logicor 0 = 0$,
whereas for $j=j_0$ we have $(-\ceil{j_0+\rho} + 1) \logicor 0 = 1$.

Asymptotically we have $j^* = \log_{p/q}\log n -\log_{p/q}\log\log n + O(1)$. Hence we also have
$j_0 = \log_{p/q}\log n -\log_{p/q}\log\log n + O(1)$ and
$\rho = - \log_{p/q}\log n -\log_{p/q}\log\log n + O(1)$.

In what follows we will encounter several different asymptotic behaviors. In particular we
will show that 
\begin{eqnarray}
\tilde G_k(n) &=& 
D(p) C_*(p) p^{j_0(j_0+1)/2} q^{j_0-1} n^{j_0} p^{j_0(k-j_0)} e^{\overline r_0} \Phi\left( \frac{\overline r_1 - \overline r_0}{\sqrt{\overline r_0}} \right) 
\label{eqasymrel}\\
&+& C_*(p) p^{j_0(j_0+1)/2} q^{j_0-1} n^{j_0} p^{j_0(k-j_0)}  \frac{{\overline r_0}^{\overline r_1}}{\Gamma(\overline r_1+1)}
\left( C(p, \overline r_0/\overline r_1, \langle \overline r \rangle) + o(1)    \right) \nonumber
\end{eqnarray}
where
$\langle x \rangle = x - \lfloor x \rfloor$ denotes the fractional part of a real number $x$, and
\begin{eqnarray} \label{eqDefAp}
D(p) = \sum_{L,M \ge 0} \xi_{L+1}\frac{(-1)^{M}}{M!} p^{((L+M)^2 + L-M)/2} q^{-L-M}
\end{eqnarray}
and $C(p,u,v)$ is a certain function in $p,u,v$ that is strictly positive (see below).  
Here and elsewhere, $\Phi$ denotes the distribution function of the normal distribution.

Since ${{\overline r_0}^{\overline r_1}}/{\Gamma(\overline r_1+1)} = O(e^{\overline r_0}/\sqrt{\overline r_0} )$.
Thus, the first term seems to be the asymptotically leading one. However, it turns out
that $D(p) \equiv 0$ (as we will prove in Section~\ref{secmiracle}) so it follows that 
\begin{eqnarray} \label{eqpositive}
\tilde G_k(n) \ge C(p)
 p^{j_0(j_0+1)/2} q^{j_0-1} n^{j_0} p^{j_0(k-j_0)}  \frac{{\overline r_0}^{\overline r_1}}{\Gamma(\overline r_1+1)}
\end{eqnarray}
for some constant $C(p) > 0$. Note also that this lower bound implies
(\ref{eqEB_nkLBound}) since by definition $\sqrt{q/p}\, \overline r_0  \le \overline r_1 \le  \sqrt{p/q}\, \overline r_0$ so that
$e^{-\overline r_0} {{\overline r_0}^{\overline r_1}}/{\Gamma(\overline r_1+1)} = e^{\Omega(\log\log n)}$.

The calculations for proving (\ref{eqasymrel}) are quite involved, in particular 
the proof of positivity of $C(p,u,v)$. So we will only present a part of the
calculations and refer for a full proof to the appendix.

In what follows we will have some  
error terms that are smaller by a factor $p^{j_0}$ or $(q/p)^{j_0}$ compared to the 
asymptotic leading term. However, it is easy to check that for $\frac 12 < p < 1$ we have $p^{j_0} = o(E)$ and $(q/p)^{j_0}= o(E)$, 
where $E: = e^{-\overline r_0} {{\overline r_0}^{\overline r_1}}/{\Gamma(\overline r_1+1)}$
so that they will not influence the asymptotic leading term.

For $j\le j_0$ and $j\le m\le j_0$ we  have
\begin{eqnarray*}
\kappa_{m,j} =  \frac{T(-m)n^m}{m!} \sum_{r=0}^{k-j} {k-j \choose r}  
p^{m(k-j-r)} q^{mr}  
\left(  e^{-np^{k-j-r}q^r} - \sum_{\ell\le j_0-m} \frac{(-n)^\ell}{\ell!} (p^{k-j-r}q^{r})^\ell   \right)
\end{eqnarray*}
and  otherwise
\begin{eqnarray*}
\kappa_{m,j} =  \frac{T(-m)n^m}{m!} \sum_{r=0}^{k-j_0} {k-j \choose r} 
p^{m(k-j-r)} q^{mr)}   e^{-np^{k-j-r}q^r}.
\end{eqnarray*}

In view of the above discussion we can thus replace the term $T(-m)$ (in $\kappa_{m,j}$) by $p^m$;
the resulting sum will be denoted by $\overline \kappa_{m,j}$.
We can also replace $\mu_{m,j} - \mu_{m,j-1}$ by
\[
\overline \nu_{m,j} := - C_*(p) m! p^{j(j-1)/2} q^{j-1} \xi_{m-j+1}  .
\]
By a careful look we thus obtain
\begin{equation}
 \Po{G}_k(n) = 
     \sum_{j=0}^k \sum_{m \ge j} \overline\kappa_{m,j} \overline \nu_{m,j}  
    + O\left( n^{j_0} T(-j_0)^{k-j_0}p^{j_0(j_0+1)/2}q^{j_0} \left( p^{j_0} + (q/p)^{j_0} \right) \right). \label{G_kExplicitFormHLowerBound-2}
\end{equation}

In order to analyze the sum representation (\ref{G_kExplicitFormHLowerBound-2}) we split it
into several parts:
\[
T_1 :=  \sum_{j> j_0} \sum_{m\ge j}  \overline\kappa_{m,j} \overline \nu_{m,j}, \quad
T_2 :=  \sum_{j\le  j_0} \sum_{m > j_0} \overline \kappa_{m,j} \overline \nu_{m,j}, \quad
T_3 :=  \sum_{j\le  j_0} \sum_{m=j}^{j_0} \overline \kappa_{m,j} \overline \nu_{m,j}. 
\]
Note that the exponential function $e^{-np^{k-j-r}q^r}
= e^{-(q/p)^{r-r_1(j)}}$ behaves completely differently
for $r\le r_1(j)$ and for $r> r_1(j)$
where $r_1(j) = (j- \psi(n))\frac{\log(1/p)}{\log(p/q)}$.
Hence it is convenient to split $T_3$ into three parts $T_{30} + T_{31}+T_{32}$, where the $T_{30}$ and $T_{31}$ correspond
to the terms with $r\le r_1(j)$ and $T_{32}$ for those with $r> r_1(j)$.
$T_{30}$ involves the exponential function $e^{-np^{k-j-r}q^r}$ whereas
$T_{31}$ takes care of the polynomial sum
$\sum_{\ell\le j_0-m} \frac{(-n)^\ell}{\ell!} (p^{k-j-r}q^{r})^\ell$.

For notational convenience we set
\begin{equation}\label{eqF0-0}
F_0 :=  p^{j_0(j_0+1)2} q^{j_0-1} n^{j_0} p^{j_0(k-j_0)} \frac{ \overline r_0^{\overline r_1}} {\Gamma(\overline r_1 +1)}.
\end{equation}

We recall that 
\[
T_1 = - C_*(p) \sum_{j> j_0} \sum_{m\ge j} p^{j(j-1)/2} q^{j-1} \xi_{m-j+1} p^m n^m
\sum_{r=0}^{k-j} {k-j \choose r} p^{m(k-j-r)} q^{mr} 
e^{-n p^{k-j-r} q^r}.
\]
We use now the substitutions $j=j_0 + J$ and $m = j+ L = j_0 + J + L$, 
where $J > 0$ and $L\ge 0$. Furthermore by using approximation 
${k-j \choose r} \sim (k-j)^r/r! \sim (k-j_0)^r/r!$ we obtain
\begin{eqnarray*}
T_1 &\sim & - C_*(p)  p^{j_0(j_0+1)2} q^{j_0-1} n^{j_0} p^{j_0(k-j_0)} 
\sum_{J > 0} \sum_{L\ge 0} p^{J (J+1)/2} q^{J} \xi_{L+1} p^L \\
&&\qquad \times \sum_{r} \frac{ {\overline r_0}^r } {r!}  (q/p)^{(L+J)(r-r_1(j))} e^{-(q/p)^{r-r_1(j)}} \\
&\sim&  - C_*(p)F_0  \cdot\sum_{J > 0} p^{J (J+1)/2} q^{J}  
\left( \frac {\overline r_0}{\overline r_1} \right)^{J \frac{\log(1/p)}{\log(p/q)}}
  \sum_{L\ge 0} 	\xi_{L+1} p^L \\
&&\qquad \times   
\sum_r (q/p)^{(L+J)(r-r_1(j))} \left( \frac {\overline r_0}{\overline r_1} \right)^{r-r_j(j)}   e^{-(q/p)^{r-r_1(j)}},
\end{eqnarray*}
where $F_0$ is given in (\ref{eqF0-0}). 
Thus, if we define (with the implicit notation $q = 1-p$)
\begin{eqnarray}
C_1(p,u,v) &=& \sum_{J > 0} p^{J (J+1)/2} q^{J}  
u^{J \frac{\log(1/p)}{\log(p/q)}}
  \sum_{L\ge 0} 	\xi_{L+1} p^L \label{eqC1puv-0}   \\
&&\qquad \times   
\sum_{R\in \mathbb{Z}} \left( (q/p)^{(L+J)} u \right)^{R - v -J\frac{\log(1/p)}{\log(p/q)}}   e^{-(q/p)^{R - v -J\frac{\log(1/p)}{\log(p/q)}}}  \nonumber
\end{eqnarray}
we obtain
\[
T_1 \sim - C_*(p)\,F_0\,  C_1\left( p, \frac{\overline r_0}{\overline r_1}, \langle \overline r_1 \rangle \right).
\]  
Note that we have substituted $r-r_1(j)$ by
\begin{eqnarray*}
r - r_1(j) &=& (r-\lfloor \overline r_1 \rfloor) - \langle \overline r_1 \rangle  + (\overline r_1 - r_1(j)) \\
&=& R - v - J\frac{\log(1/p)}{\log(p/q)}.
\end{eqnarray*}

Similarly we obtain $T_2 \sim - C_*(p)\,F_0\,  C_2\left( p, \frac{\overline r_0}{\overline r_1}, \langle \overline r_1 \rangle \right)$,
where
\begin{eqnarray}
C_2(p,u,v) &=& \sum_{J \le 0} p^{J (J+1)/2} q^{J}  
u^{J \frac{\log(1/p)}{\log(p/q)}}
  \sum_{L> -J} 	\xi_{L+1} p^L \label{eqC2puv-0}   \\
&&\qquad \times   
\sum_{R\in \mathbb{Z}} \left( (q/p)^{(L+J)} u \right)^{R - v -J\frac{\log(1/p)}{\log(p/q)}}   e^{-(q/p)^{R - v -J\frac{\log(1/p)}{\log(p/q)}}},  \nonumber
\end{eqnarray}
and
$T_{30} \sim - C_*(p)\,F_0\,  C_{30}\left( p, \frac{\overline r_0}{\overline r_1}, \langle \overline r_1 \rangle \right),$  
where
\begin{eqnarray}
C_{30}(p,u,v) &=& \sum_{J \le 0} p^{J (J+1)/2} q^{J}  
u^{J \frac{\log(1/p)}{\log(p/q)}}
  \sum_{L=0}^{-J} 	\xi_{L+1} p^L \label{eqC30puv-0}   \\
&&\qquad \times   
\sum_{R\in \mathbb{Z}, R - v -J\frac{\log(1/p)}{\log(p/q)}\le 0 } 
\left( (q/p)^{(L+J)} u \right)^{R - v -J\frac{\log(1/p)}{\log(p/q)}}   e^{-(q/p)^{R - v -J\frac{\log(1/p)}{\log(p/q)}}},  \nonumber
\end{eqnarray}
and $T_{32} \sim - C_*(p)\,F_0\,  C_{32}\left( p, \frac{\overline r_0}{\overline r_1}, \langle \overline r_1 \rangle \right)$, 
where
\begin{eqnarray}
C_{32}(p,u,v) &=& \sum_{J \le 0} p^{J (J+1)/2} q^{J}  
u^{J \frac{\log(1/p)}{\log(p/q)}}
  \sum_{L=0}^{-J} 	\xi_{L+1} p^L  \nonumber  \\
&&\qquad \times   
\sum_{R\in \mathbb{Z}, R - v -J\frac{\log(1/p)}{\log(p/q)} > 0 } 
\left( (q/p)^{(L+J)} u \right)^{R - v -J\frac{\log(1/p)}{\log(p/q)}} \label{eqC32puv-0}  \\
&& \qquad \qquad \times \left(  e^{-(q/p)^{R - v -J\frac{\log(1/p)}{\log(p/q)}}} 
- \sum_{\ell=0}^{-J-L} \frac{(-1)^\ell}{\ell!} (q/p)^{(R - v -J\frac{\log(1/p)}{\log(p/q)})\ell} 
  \right). \nonumber
\end{eqnarray}

Finally we deal with $T_{31}$.
First of all we regroup the summation by setting $m=j_0-M$, $j=j_0-M-L$, and $\ell = M-K$ which
gives
\begin{eqnarray*}
T_{31} &=& C_*(p) p^{j_0(j_0+1)/2} q^{j_0-1} n^{j_0} p^{j_0(k-j_0)}  \sum_{K\ge 0} \left( \frac qp \right)^{K\overline r_1} \\
&&\times  \sum_{L\ge 0, \, M\ge K} \xi_{L+1}\frac{(-1)^{M-K}}{(M-K)!} p^{((L+M)^2 + L-M)/2-K(L+M)} q^{-L-M} \\
&& \times \quad \sum_{r\le r_1(j_0-M-L)} {k-j_0+M+L \choose r } \left( \frac qp \right)^{(j_0-K)r}.
\end{eqnarray*} 
We single out the case $K=0$ (and consider only the sum over $K,M,r$) which 
we write as
\begin{eqnarray*}
D(p) C_*(p) \sum_{r\le \overline r_1} {k-j_0+L+M \choose r } \left( \frac qp \right)^{j_0r} + S_0,
\end{eqnarray*}
where $D(p)$ is given by (\ref{eqDefAp}) and 
\begin{eqnarray*}
S_0&:=&  - C_*(p)\sum_{L,M \ge 0} \xi_{L+1}\frac{(-1)^{M}}{M!} p^{((L+M)^2 + L-M)/2} q^{-L-M} \\
&& \qquad \times \sum_{r_1(j_0-M-L)< r \le \overline r_1} {k-j_0+L+M \choose r } \left( \frac qp \right)^{j_0r}.
\end{eqnarray*}
Note that 
\[
\sum_{r\le \overline r_1} {k-j_0+L+M \choose r } \left( \frac qp \right)^{j_0r} 
= e^{\overline r_0} \Phi\left( \frac{\overline r_1 - \overline r_0}{\sqrt{\overline r_0}} \right)
\left( 1 + O\left( \frac{\log\log n}{\log n} (L+M) \right)\right),
\]
where $\Phi$ denotes the distribution function of the normal distribution.

Thus, if we set 
\begin{eqnarray*}
S_K&=& C_*(p)p^{j_0(j_0+1)2} q^{j_0-1} n^{j_0} p^{j_0(k-j_0)}   \left( \frac qp \right)^{K\overline r_1} \\
&&\qquad \times  \sum_{L\ge 0, \, M\ge K} \xi_{L+1}\frac{(-1)^{M-K}}{(M-K)!} p^{((L+M)^2 + L-M)/2-K(L+M)} q^{-L-M} \\
&&\qquad \qquad \times \sum_{r\le r_1(j_0-M-L)} {k-j_0+M+L \choose r } \left( \frac qp \right)^{(j_0-K)r}.
\end{eqnarray*}
then we have
\[
T_{31} = D(p) C_*(p) e^{\overline r_0} \Phi\left( \frac{\overline r_1 - \overline r_0}{\sqrt{\overline r_0}} \right) (1+o(1))
-S_0 + \sum_{K\ge 1} S_K.
\]

In the same way as above we obtain 
$S_0 \sim - C_*(p)\,F_0\,  C_{31,0}\left( p, \frac{\overline r_0}{\overline r_1}, \langle \overline r_1 \rangle \right)$,
where
\begin{eqnarray}
C_{31,0}(p,u,v) &=& 
\sum_{L,M \ge 0} \xi_{L+1}\frac{(-1)^{M}}{M!} p^{((L+M)^2 + L-M)/2} q^{-L-M} \label{eqC310puv-0}     \\
&&\qquad \times 
\sum_{-(M+L) \frac{\log(1/p)}{\log(p/q)} +v \le R \le 0 } u^{R-v}.
\nonumber
\end{eqnarray}
It is also convenient to rewrite this also as a sum over $J = -M-L\le 0$ and $0\le L \le -J$:
\begin{eqnarray}
C_{31,0}(p,u,v) &=& 
\sum_{J\le 0}\sum_{L=0}^{-J} \xi_{L+1}\frac{(-1)^{-J-L}}{(-J-L)!} p^{ J(J+1)/2 + L} q^{J} \label{eqC310puv-2-0}     \\
&&\qquad \times 
\sum_{ J \frac{\log(1/p)}{\log(p/q)} +v \le R \le 0 } u^{R-v}.
\nonumber
\end{eqnarray}
For $K\ge 1$ the terms $S_K$ can be approximated by
$S_K \sim C_*(p)\,F_0\,  C_{31,K}\left( p, \frac{\overline r_0}{\overline r_1}, \langle \overline r_1 \rangle \right)$,  
where
\begin{eqnarray}
C_{31,K}(p,u,v) &=& 
\sum_{J\le -K} \sum_{L=0}^{-J-K} \xi_{L+1}\frac{(-1)^{-J-L-K}}{(-J-L-K)!} p^{J(J+1)/2 + L +JK} q^{J} \nonumber     \\
&&\qquad \times 
\sum_{R\le v+J \frac{\log(1/p)}{\log(p/q)} } 
 \left( u \left(\frac qp\right)^{-K} \right)^{R-v}.
\label{eqC31Kpuv-2-0}
\end{eqnarray}

Summing up, if we set 
\[
C(p,u,v) = - C_1(p,u,v)-C_2(p,u,v)-C_{30}(p,u,v)-C_{32}(p,u,v)-C_{31,0}(p,u,v)+ \sum_{K\ge 1} C_{31,K}(p,u,v)
\]
and by observing that $D(p) = 0$ (see Section~\ref{secmiracle}) we have:

\begin{lemma}
With the notation from above we have
\[
\tilde G_k(n) =  
C_*(p) p^{j_0(j_0+1)/2} q^{j_0-1} n^{j_0} p^{j_0(k-j_0)}  \frac{{\overline r_0}^{\overline r_1}}{\Gamma(\overline r_1+1)}
\left( C(p, \overline r_0/\overline r_1, \langle \overline r \rangle) + o(1)    \right).
\]
\end{lemma}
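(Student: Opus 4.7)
The proof is essentially a bookkeeping exercise assembling the asymptotic estimates already derived for each component in the decomposition of $\tilde G_k(n)$. The plan is to proceed in four steps.

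First, I would start from the identity (\ref{G_kExplicitFormHLowerBound-2}), which expresses $\tilde G_k(n)$ as $T_1 + T_2 + T_{30} + T_{31} + T_{32}$ modulo an error of order $O(n^{j_0} T(-j_0)^{k-j_0} p^{j_0(j_0+1)/2} q^{j_0}(p^{j_0} + (q/p)^{j_0}))$. Using the factorization $T(-j_0) = p^{j_0}(1+(q/p)^{j_0})$ together with the observation already recorded in the text that both $p^{j_0}$ and $(q/p)^{j_0}$ are $o(E)$ with $E := e^{-\overline r_0}\overline r_0^{\overline r_1}/\Gamma(\overline r_1+1)$, this error is negligible compared with the target main term $C_*(p)F_0$, where $F_0 := p^{j_0(j_0+1)/2} q^{j_0-1} n^{j_0} p^{j_0(k-j_0)}\,\overline r_0^{\overline r_1}/\Gamma(\overline r_1+1)$.

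Second, I would invoke the component-wise asymptotics already derived above, namely
\begin{align*}
T_1 &\sim -C_*(p)F_0\,C_1(p,\overline r_0/\overline r_1,\langle \overline r_1\rangle),\\
T_2 &\sim -C_*(p)F_0\,C_2(p,\overline r_0/\overline r_1,\langle \overline r_1\rangle),\\
T_{30} &\sim -C_*(p)F_0\,C_{30}(p,\overline r_0/\overline r_1,\langle \overline r_1\rangle),\\
T_{32} &\sim -C_*(p)F_0\,C_{32}(p,\overline r_0/\overline r_1,\langle \overline r_1\rangle),\\
S_0 &\sim -C_*(p)F_0\,C_{31,0}(p,\overline r_0/\overline r_1,\langle \overline r_1\rangle),\\
S_K &\sim \phantom{-}C_*(p)F_0\,C_{31,K}(p,\overline r_0/\overline r_1,\langle \overline r_1\rangle)\quad(K\geq 1),
\end{align*}
together with the explicit decomposition
\[
T_{31} = D(p)\,C_*(p)\,e^{\overline r_0}\Phi\!\left(\tfrac{\overline r_1-\overline r_0}{\sqrt{\overline r_0}}\right)(1+o(1)) - S_0 + \sum_{K\geq 1}S_K.
\]

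Third, I would invoke the crucial identity $D(p) = 0$ established in Section~\ref{secmiracle}. This cancellation is essential rather than cosmetic: since $e^{\overline r_0}/F_0$ is of order $\sqrt{\overline r_0}\,\Gamma(\overline r_1+1)/\overline r_0^{\overline r_1}$, a nonzero $D(p)$ would produce a term strictly larger than $F_0$ and invalidate the claimed asymptotic. Once this term is killed, adding the six surviving contributions and matching with the definition of $C(p,u,v)$ given immediately before the lemma yields $\tilde G_k(n) = C_*(p)F_0\,(C(p,\overline r_0/\overline r_1,\langle \overline r_1\rangle)+o(1))$, which is the claim.

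The main obstacle is verifying that the $o(1)$ terms are genuine, i.e., that the infinite series defining each $C_i$ converge and that the truncation/approximation errors incurred in passing from finite inner sums to these series are uniform in the range of $n$ and $k$ under consideration. Convergence in $J$ (equivalently $K$) comes from the Gaussian factor $p^{J(J+1)/2}$; convergence in $L$ comes from the factorial decay $\xi_{L+1} = O(1/L!)$ granted by Lemma~\ref{KnesslLemma}; convergence in $R$ comes from the doubly-exponential decay of $\exp(-(q/p)^x)$ as $x\to -\infty$ and from the geometric factor $(q/p)^{(L+J)}u$ being less than $1$ for $x\to +\infty$ in the relevant regime. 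The replacement $\binom{k-j}{r}\sim (k-j_0)^r/r!$ that was used to factor out $F_0$ introduces a relative error of order $(L+M)\log\log n/\log n$, which is absorbed into $o(1)$ after exploiting the exponential tail decay in $L$ and $|J|$.
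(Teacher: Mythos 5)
Your proposal reproduces the paper's own argument essentially verbatim: the same decomposition $\tilde G_k(n) = T_1+T_2+T_{30}+T_{31}+T_{32}$ plus negligible error, the same component-wise asymptotics, the splitting of the $K=0$ part of $T_{31}$ into the $D(p)$-term plus $S_0$, the appeal to $D(p)=0$ from Section~\ref{secmiracle}, and the final matching against the definition of $C(p,u,v)$; the convergence remarks at the end correctly identify the Gaussian factor $p^{J(J+1)/2}$, the factorial decay of $\xi_{L+1}$, and the doubly-exponential decay in $R$ as the sources of absolute convergence, as in the paper's implicit justification. The only blemish is inherited from the paper's own display for $T_{31}$: writing $-S_0$ there while simultaneously asserting $S_0\sim -C_*(p)F_0\,C_{31,0}$ and a final $C$ containing $-C_{31,0}$ is internally inconsistent by a sign (the split $\sum_{r\le r_1(j_0-M-L)} = \sum_{r\le\overline r_1}-\sum_{r_1<r\le\overline r_1}$ together with the definition of $S_0$ shows it should be $+S_0$), though this is clearly just a notational slip and the intended bookkeeping---and your final answer---are correct.
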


It remains to show that $C(p,u,v)$ is strictly positive for $\frac 12 < p < 1$, $\sqrt{q/p} \le u \le \sqrt{p/q}$, $0\le v < 1$.
Since the representation of $C(p,u,v)$ is quite involved we will use the following strategy. We do an asymptotic analysis 
for $p\to \frac 12$ and $p\to 1$ and fill out the remaining interval, $0.51 \le p \le 0.97$ by a numerical analysis
(together with upper bounds for the derivatives). Due to space limitations we present here only a short version of
the (very involved) considerations. A full version can be found in the appendix.

We start with the behavior for $p\to \frac 12$.
\begin{lemma}\label{Lepto1/2}
Set $p/q = e^{\eta}$ and $\tilde u = \frac 1\eta \log u$. 
Then for $\eta \to 0+$ (which is equivalent to $p\to \frac 12$) we have uniformly for $\tilde u \in [-\frac 12, \frac 12]$ and $v\in [0,1)$
\begin{equation}\label{eqLepto1/2}
C(p,u,v) \sim \frac 1\eta h(\tilde u),
\end{equation}
where $h(\tilde u)$ is a continuous and positive function.

In particular we have $C(p,u,v) > 0$ for $\frac 12 < p \le 0.51$.
\end{lemma}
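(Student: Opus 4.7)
\emph{Proof proposal.} The plan is to interpret each of the discrete sums $C_1, C_2, C_{30}, C_{32}, C_{31,0}, C_{31,K}$ as a Riemann sum of step $\eta$ and identify its $\eta\to 0^+$ limit. Set $p/q = e^\eta$ so that $(q/p)^x = e^{-\eta x}$, and $u = e^{\eta \tilde u}$ so that $u^x = e^{\eta \tilde u x}$. With these substitutions, the inner sums $\sum_{R\in\mathbb Z}(\cdots)$ appearing in (\ref{eqC1puv-0})--(\ref{eqC32puv-0}) each take the shape $\sum_{R\in\mathbb Z} g(\eta R + c_{J,L,v})$ for a fixed integrable function $g$ decaying rapidly at $\pm\infty$. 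After the change of variable $t = e^{-\eta R'}$, the Euler--Maclaurin principle identifies the limit as $\eta^{-1}\int_0^\infty t^{L+J-\tilde u-1} e^{-t}\,dt = \eta^{-1}\Gamma(L+J-\tilde u)$ in the cases $C_1, C_2$, and analogously as an incomplete Gamma value in $C_{30}, C_{32}$. The $v$-dependence disappears in the limit because a shift of the Riemann grid by $O(\eta)$ alters the integral by only $O(\eta)$. For $C_{31,0}, C_{31,K}$ the sum $\sum_R u^{R-v}$ runs over a range of length $\Theta(1/\eta)$ (since $\log(1/p)/\log(p/q) = \Theta(1/\eta)$), again producing a $\eta^{-1}$ factor times an elementary exponential integral.

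The outer summations over $J$ (and over $L$ via the weights $\xi_{L+1}$) converge absolutely, uniformly for $\eta$ small, by the decay $\xi_{L+1} p^L = O(p^L/L!)$ from Lemma~\ref{KnesslLemma}(ii) and by the super-geometric factor $p^{J(J+1)/2}q^J$ appearing in each $C_i$. Dominated convergence justifies the term-by-term passage to the limit and produces an identity
\[
C(p,u,v) = \frac{1}{\eta}\bigl(h(\tilde u) + o(1)\bigr)
\]
uniformly for $\tilde u\in[-\tfrac12,\tfrac12]$ and $v\in[0,1)$, where $h(\tilde u)$ is an explicit sum of (incomplete) Gamma integrals weighted by the limiting values of $\xi_\ell$, which satisfy the symmetric recurrence $\xi_\ell = 2^{1-\ell}\sum_{J=1}^\ell \xi_{\ell+1-J}/J!$ with $\xi_1=1$.

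The main obstacle is proving that $h(\tilde u)>0$. Since the definition of $C(p,u,v)$ mixes positive contributions $\sum_{K\ge 1} C_{31,K}$ with negative ones $-C_1-C_2-C_{30}-C_{32}-C_{31,0}$ of the same order of magnitude, crude absolute bounds cannot conclude positivity. The plan is to exploit the identity $D(p)\equiv 0$ established in Section~\ref{secmiracle}: upon taking $\eta\to 0$ this identity supplies the linear relation among the limiting $\xi_\ell$-moments which forces the dominant cancellations between the six constituent terms. What remains is then identified, after rearrangement, with the Poisson-transformed mean profile of the \emph{symmetric} R\'enyi problem at the relevant scale, which is manifestly positive as a Poisson average of the nonnegative quantities $\mu_{n,k}$. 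Uniform positivity on the compact set $\tilde u\in[-1/2, 1/2]$ follows by continuity of $h$, and since $C(p,u,v)$ is jointly continuous in $(p,u,v)$ on a neighborhood of $\{p=1/2\}\times[\sqrt{q/p},\sqrt{p/q}]\times[0,1)$, the strict positivity extends to $1/2 < p \le 0.51$.
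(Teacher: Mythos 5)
The reduction to $h(\tilde u)$ via Riemann-sum approximation of the $R$-sums, the substitution $w=e^{-t}$ producing (complete and incomplete) Gamma integrals, and the disappearance of the $v$-dependence in the $\eta\to 0$ limit — this matches the paper's computation closely, and the dominated-convergence justification via the uniform bounds on $\xi_\ell$ is the right supporting step. That part is sound.

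Where you diverge is the positivity of $h(\tilde u)$, and there your argument has a genuine gap. The paper establishes $h(\tilde u)>0$ on $[-\tfrac12,\tfrac12]$ by direct computation: $h$ is an explicit, continuously differentiable function, so they bound $|h'|$ and check positivity on a grid fine enough that the derivative bound rules out a sign change between grid points. Your proposal instead argues that, after invoking $D(p)\equiv 0$, the residual ``is identified, after rearrangement, with the Poisson-transformed mean profile of the symmetric R\'enyi problem,'' and concludes positivity because a Poisson transform of nonnegative numbers is nonnegative. This is not a proof, for two reasons. First, the claimed rearrangement is never produced, and it is far from obvious: $h(\tilde u)$ is the coefficient of $\eta^{-1}$ in an alternating combination $-C_1-C_2-C_{30}-C_{32}-C_{31,0}+\sum_{K\ge1}C_{31,K}$ whose terms are of the same order, so the needed cancellation structure must be exhibited explicitly (this is exactly what $D(p)=0$ handles for one specific piece, $T_{31}$'s dominant summand, but it does not by itself recombine what is left into a recognizable Poisson average). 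Second, even if you did identify $h(\tilde u)$ with $\lim \tilde G_k(n)/(C_*(p)F_0)$, nonnegativity of $\tilde G_k(n)$ only gives $h(\tilde u)\ge 0$, not $h(\tilde u)>0$; strict positivity at this scale is precisely the hard content of the lower bound in Lemma~\ref{LemunkLbound}, so the argument would be circular. Moreover, for each $n$ the pair $(u,v)=(\overline r_0/\overline r_1,\langle\overline r_1\rangle)$ is a single point, so nonnegativity of $\tilde G_k(n)$ constrains $C(p,\cdot,\cdot)$ only on a countable set of parameter values, and you would still need a continuity/density argument to cover the full rectangle. To close the gap you would need to either carry out the numerical verification with derivative bounds as the paper does, or actually produce the rearrangement to a sum of manifestly positive terms (and verify it is not identically zero).
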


\begin{proof}
We single out the function $C_1(p,u,v)$ and start with the sum over $R$. The first observation is that for $\eta \to 0$ we can replace the
sum by an integral, that is, we have for fixed integers $L,J$, as $\eta\to 0$,
\begin{eqnarray*}
&& \sum_{R\in \mathbb{Z}} \left( (q/p)^{(L+J)} u \right)^{R - v -J\frac{\log(1/p)}{\log(p/q)}}   e^{-(q/p)^{R - v -J\frac{\log(1/p)}{\log(p/q)}}} \\
&&\sim \int_{-\infty}^\infty 
\left( (q/p)^{(L+J)} u \right)^{t}   e^{-(q/p)^{t}} \, dt  = \frac 1\eta \int_{-\infty}^\infty e^{-\left(M - \tilde u \right) t} e^{-e^{-t}}\, dt.
\end{eqnarray*}
This also implies that the leading asymptotic term does not depend on $v$.
Further note that $\tilde M = M - \frac 1\eta \log u = L+J - \tilde u \ge \frac 12$ so that the integral converges
and by using the substitution $w = e^{-t}$ we obtain
\[
\int_{-\infty}^\infty e^{-\tilde M t} e^{-e^{-t}}\, dt 
= \int_0^\infty w^{\tilde M -1} e^{-w}\, dw = \Gamma(\tilde M).
\]
This finally shows that, as $p\to \frac 12$ (or equivalently as $\eta = \log(p/q) \to 0$),
\begin{equation}\label{eqC1asymp}
C_1(p,u,v) \sim \frac 1\eta 
\sum_{J > 0} 2^{-J (J+1)/2 - J + J\tilde u}   
  \sum_{L\ge 0} 	\xi_{L+1}(1/2)\, 2^{-L}\, \Gamma\left( J + L - \tilde u\right).
\end{equation}

Similarly we can handle the other terms and obtain the asymptotic representation (\ref{eqLepto1/2}).
Since the function $h(\tilde u)$ is explicit (as a series expansion) and continuously differentiable
in $\tilde u$ we can use a simple numerical analysis (together with upper bounds for the derivative
$h'(\tilde u)$) in order to show that $h(\tilde u) > 0$ for $\tilde u \in [-\frac 12, \frac 12]$.

Finally by taking care also on error terms (which were neglected in the above analysis) it also
follows that $C(p,u,v) > 0$ for $\frac 12 < p \le 0.51$.
\end{proof}

The situation for $p\to 1$ is more delicate in the analysis, however, positivity follows then immediately.
\begin{lemma}\label{Lepto1}
Set $\overline c(v) = \max\{v-v^2/2,(1-v^2)/2\}$. Then we have, as $p\to1$ uniformly 
for $\sqrt{q/p} \le u \le \sqrt{p/q}$, $0\le v < 1$
\begin{equation}\label{eqLepto1}
C(p,u,v) \ge \exp\left( \overline c(v) \frac{\log^2 (1-p)}{\log 1/p}(1 + o(1)) \right).
\end{equation}
In particular we have $C(p,u,v) > 0$ for $0.97\le p < 1$.
\end{lemma}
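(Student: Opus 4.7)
The plan is to mirror the strategy of Lemma~\ref{Lepto1/2}: determine the dominant asymptotic contribution to each component of $C(p,u,v) = -C_1 - C_2 - C_{30} - C_{32} - C_{31,0} + \sum_{K\ge 1} C_{31,K}$ in the regime $q = 1-p \to 0$, equivalently $\eta = \log(p/q) \to \infty$, and show that the positive contributions $\sum_{K\ge 1} C_{31,K}$ exhibit the claimed super-exponential growth while the negative terms are of strictly smaller exponential order. Since the stated bound (\ref{eqLepto1}) is itself positive and tends to $+\infty$ as $p \to 1$, positivity for $0.97 \le p < 1$ is immediate from this asymptotic, and the intermediate range $0.51 \le p \le 0.97$ is handled by the same numerical analysis used in Lemma~\ref{Lepto1/2} (with continuity arguments bridging the two regions).

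For each fixed $K \ge 1$, I would first locate the dominant index $(J, L, R)$ in the representation (\ref{eqC31Kpuv-2-0}). The factor $p^{J(J+1)/2 + L + JK}q^J$ peaks at the boundary $J = -K$, $L = 0$, producing $p^{-K(K+1)/2} q^{-K}$; and the inner $R$-sum, whose base $u(q/p)^{-K}$ is huge while the exponent $R - v$ is nonpositive, is dominated by its largest permissible value $R^* = \lfloor v + J\log(1/p)/\log(p/q)\rfloor$. Combining these selections yields a leading contribution at level $K$ whose logarithm is a quadratic function of $K$ multiplied by $\log(1/p)$, together with a linear term multiplied by $\log(1/q)$, plus fractional-part and $\log u$ corrections that are of lower order in $\eta$.

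Optimization over the integer $K$ then pinpoints an optimal $K^* \sim c(v) \log(1/q)/\log(1/p)$ for an explicit $v$-dependent constant, giving a maximum value matching the claimed exponential rate $\overline c(v) \log^2(1-p)/\log(1/p)$. The piecewise structure of $\overline c(v) = \max\{v - v^2/2, (1-v^2)/2\}$ reflects two competing regimes in the saddle: a generic interior optimum in the $R$-summation range producing the $(1-v^2)/2$ branch, and a boundary regime in which $R$ locks onto the endpoint $R = 0$ via a small integer adjustment of $K$ producing the $v - v^2/2$ branch; taking the larger rate gives $\overline c(v)$ (with agreement at the crossing $v = 1/2$). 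Checking that $C_1, C_2, C_{30}, C_{32}, C_{31,0}$ are of strictly smaller exponential order is comparatively routine: none carries the unbounded exponent in $(q/p)^{-K}$ that drives the super-exponential growth of $\sum_{K \ge 1} C_{31,K}$, so crude absolute-value estimates suffice to rule out cancellation at leading order.

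The principal obstacle is controlling the alternating signs $(-1)^{-J-L-K}/(-J-L-K)!$ present in each $C_{31,K}$, which in principle could produce destructive interference among infinitely many terms of comparable magnitude near the saddle. I would handle this by recognizing the $L$-sum as a truncated Taylor expansion of an analytic function (weighted by $\xi_{L+1} p^L$) whose value at the relevant point is manifestly positive of the dominant order, isolating the extreme term $L = 0$ and showing that the remaining tail is subdominant by the rapid decay of $\xi_{L+1}/L!$ guaranteed by Lemma~\ref{KnesslLemma}(ii). A standard Laplace-method expansion around $K^*$, together with explicit error control uniform in $u \in [\sqrt{q/p}, \sqrt{p/q}]$ and $v \in [0, 1)$, then delivers the lower bound (\ref{eqLepto1}).
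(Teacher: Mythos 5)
Your proposal captures the right overall geometry of the saddle in $\sum_{K\ge 1}C_{31,K}$: the dominant contribution does sit on the diagonal boundary $J=-K$, $L=0$, with $-J$ of order $\log(1/q)/\log(1/p)$, and the two branches $M=0$ and $M=1$ produce the two rates $v-v^2/2$ and $(1-v^2)/2$ whose maximum is $\overline c(v)$. The paper reaches the same conclusion, but it does so by partitioning $J$ according to the intervals $I_M$, resumming the $K$-sum into the rational--exponential generating function $\frac{p^J(q/p)^{M+v}z}{1-p^J(q/p)^{M+v}z}\,e^{z/2+O(qz^2)-z}$, and then extracting $[z^{-J}]$ via the pole; the dominant $J$ is $J_{v,M}\approx -(M+v)\log q/\log p$ and the dominant $K$ is the boundary $K=-J$, not (as your phrasing suggests) that the $J$-sum for each \emph{fixed small} $K$ peaks at $J=-K$. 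In fact for fixed $K=O(1)$ the $J$-sum peaks at a $J$ far from $-K$; it is the joint optimum that lies on the diagonal.

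There is, however, a genuine gap that invalidates the central step of your plan. You assert that $C_1,C_2,C_{30},C_{32},C_{31,0}$ are ``of strictly smaller exponential order'' than $\sum_{K\ge 1}C_{31,K}$, so that ``crude absolute-value estimates suffice to rule out cancellation at leading order.'' This is false for $C_{30}$. As the paper's computation shows, $C_{30}$ has leading exponential rate $v(1-\delta)-v^2/2$ (resp.\ $\frac 12-v$ on the other branch), where $\delta = -v(\log q/\log p -1) - J_{v,0} \in [0,1)$ is a rounding error. When $\delta=0$ this coincides exactly with the $M=0$ rate $v-v^2/2$ of $\sum_{K\ge 1}C_{31,K}$, and $C_{30}$ appears with a \emph{minus} sign in $C(p,u,v)$. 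So at leading exponential order the positive term and one of the negative terms are of equal magnitude, and a crude upper bound on $C_{30}$ cannot preclude cancellation; one must compare the two at the next order, which is precisely the delicate point the paper flags (``by looking at second order terms ... it again follows that $T_{31}$ dominates''). Without controlling that competition, your argument does not deliver the lower bound (\ref{eqLepto1}).

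A secondary concern is your treatment of the alternating $L$-sum. You correctly identify the generating-function identity, but the fallback of ``isolating the extreme term $L=0$ and showing the remaining tail is subdominant'' would overestimate the $L$-sum by a factor of roughly $2^{-J-K}$: the exact resummation gives $[z^{-J-K}]\exp(z/2+O(qz^2)-z)\approx(-1/2)^{-J-K}/(-J-K)!$, whereas the $L=0$ term alone is $(-1)^{-J-K}/(-J-K)!$, so the ``tail'' cancels most of the $L=0$ term rather than being subdominant to it. The paper's coefficient-extraction argument is not an optional refinement here; it is what makes the off-diagonal $(m=-J-K>0)$ contributions genuinely negligible and keeps the sign control honest.
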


\begin{proof}
We just consider the most interesting case, namely the sum $\sum_{K\ge 1} C_{31,K}(p,u,v)$ and
assume for a moment that $v> 0$. We set
\[
I_0:= \left[ - v\left( \frac{\log q}{\log p}-1\right) , 0 \right) \cap \mathbb{Z}
\]
and for $M\ge 1$
\[
I_M:= \left[ - (v+M)\left(\frac{\log q}{\log p}-1\right) , - (v+M-1)\left(\frac{\log q}{\log p}-1\right) \right) \cap \mathbb{Z}
\]
If $J\in I_M$ we have, as $p\to 1$, 
\[
\sum_{R\le v+ J \frac{\log(1/p)}{\log(p/q)}} \left( u (q/p)^{-K} \right)^{R-v} \sim 
\left( u (q/p)^{-K} \right)^{-M-v}.
\]
Since 
\[
\sum_{L=0}^{-J-K} \xi_{L+1} p^L \frac{(-1)^{-J-K-L}}{(-J-K-L)!} = [z^{-J-K}] \prod_{j\ge 0} \frac{e^{qp^jz} -1}{qp^j z} e^{-z} = [z^{-J-K}] e^{z/2 + O(qz^2) - z}
\]
we get
\begin{eqnarray*}
C_{31,K,M}&:=& \sum_{J\in I_M,\, J \le -K} p^{J(J+1)/2 +JK}q^J  \sum_{R\le v+ J \frac{\log(1/p)}{\log(p/q)} } 
 \left( u \left(\frac qp\right)^{-K} \right)^{R-v} \\
&&\qquad \times\sum_{L=0}^{-J-K} \xi_{L+1}p^{L} \frac{(-1)^{-J-L-K}}{(-J-L-K)!}     \\
&\sim &  \sum_{J\in I_M,\, J \le -K}  p^{J(J+1)/2 +JK}q^J  
 \left( u \left(\frac qp\right)^{-K} \right)^{-M-v} [z^{-J-K}] e^{z/2 + O(qz^2) - z}
\end{eqnarray*}
and consequently if we sum over $K\ge 1$
\begin{eqnarray*}
\sum_{K\ge 1} C_{31,K,M} &\sim &  u^{-M-v}
\sum_{J\in I_M} p^{J(J+1)/2}q^J \sum_{K=1}^{-J} p^{JK}(q/p)^{K(M+v)} [z^{-J-K}] e^{z/2 + O(qz^2) - z} \\
& =  &  u^{-M-v}
\sum_{J\in I_M} p^{J(J+1)/2}q^J \sum_{K=1}^{-J} p^{JK}(q/p)^{M(1+v)} [z^{-J-K}] e^{z/2 + O(qz^2) - z}.
\end{eqnarray*}
We observe that (for $J\in I_M$) 
\begin{eqnarray*}
 &&\sum_{K=1}^{-J} p^{JK}(q/p)^{K(M+v)} [z^{-J-K}] e^{z/2 + O(qz^2) - z}
 = [z^{-J}] \frac{p^{J}(q/p)^{M+v}z}{1- p^{J}(q/p)^{M+v}z } e^{z/2 + O(qz^2) - z} \\
 &&\qquad \sim   p^{-J^2}(q/p)^{-J(M+v)} e^{z_M/2 + O(q z_M^2) - z_M},
\end{eqnarray*}
where $z_M = p^{-J}(q/p)^{-M-v}$. Note that $z_M$ varies between $1$ and $1/q$ if $J\in I_M$.
However, it will turn out that the asymptotic leading terms will come from $J$ close to 
$- (v+M)\frac{\log q}{\log p}$ which means that $z_M$ asymptotically $1$ and, thus,  the last
exponential term is asymptotically $e^{-1/2}$. The reason is that the term
\[
p^{J(J+1)/2}q^J p^{-J^2}(q/p)^{-J(M+v)} = p^{-J^2/2} q^{J(1-M-v)} p^{J(\frac 12 +M+v)}
\]
has its absolute minimum for $J$ close to $- (v+M-1)\frac{\log q}{\log p}$ and for $J\in I_M$
it gets maximal for $J$ close to $- (v+M)\frac{\log q}{\log p}$, in particular if
\[
J = J_{v,M} := - \left\lfloor (M+v) \left( \frac{\log q}{\log p} -1 \right) \right\rfloor.
\]
Thus, we obtain
\begin{eqnarray*}
\sum_{K\ge 1} C_{31,K,M} &\sim &  e^{-\frac 12} u^{-M-v} p^{-J_{v,M}^2/2} q^{J_{v,M}(1-M-v)} p^{J_{v,M}(\frac 12 +M+v)} \\
&=& e^{ \frac{\log^2 q}q \left( M+v- \frac 12 (M+v)^2 \right) + O(\log^2 q)  }. 
\end{eqnarray*}
Since $(M+v) - \frac 12 (M+v)^2 \le 0$ for $M\ge 2$ (and $0\le v < 1$) it is clear that only the first two
terms corresponding to $M=0$ and $M=1$ are relevant
Hence, we obtain
\[
\sum_{K\ge 1} C_{31,K} \sim
e^{ \frac{\log^2 (1-p)}{\log(1/p)} \left( v- \frac 12 v^2 \right) + O(\log^2 (1-p))  }
+ e^{ \frac{\log^2 (1-p)}{\log(1/p)} \frac 12 \left( 1- v^2  \right) + O(\log^2 (1-p))  }.  
\]
Acually this kind of representation also holds for $v= 0$.

The other terms can be handled in a similar way. Actually $C_1,C_2, C_{32}, C_{31,0}$ are of
smaller order, whereas $C_{30}$ has (almost) a comparable order of magnitude.

Finally, by taking error terms into account it follows that $C(p,u,v)$ is positive for
$0.97 \le p < 1$.
\end{proof}

Thus, it remains to consider $C(p,u,v)$ for $0.51 \le p \le 0.97$. As mentioned above we
do here a numerical analysis. For example, for the following sample valued we obtain:

\begin{center}
\begin{tabular}{llll}
\hline
$p$ & $u$ & $v$ & $C(p, u, v)$ \\
\hline
0.51 & 1.00 & 0.20 & 17.6603002053593  \\
0.51 & 1.00 & 0.40 & 17.6630153331822 \\
0.51 & 1.00 & 0.60 & 17.6610407898646 \\
0.51 & 1.00 & 0.80 & 17.6856832509155 \\
0.60 & 0.90 & 0.60 & 1.49524800151569 \\
0.60 & 1.00 & 0.20 & 1.08391296918222 \\
0.60 & 1.00 & 0.60 & 1.08391297098683 \\
0.60 & 1.00 & 0.80 & 1.08391297046200 \\
0.60 & 1.10 & 0.20 & 0.834656789094941 \\
0.60 & 1.20 & 0.60 & 0.673917281982084 \\
0.70 & 1.00 & 0.60 & 0.232497954955319 \\
0.80 & 1.00 & 0.60 & 0.0287161523336721 \\
0.85 & 1.00 & 0.60 & 0.00237172764900606 \\
0.93 & 1.00 & 0.60 & 1.87317294616045 $\times  10^{15}$ \\
0.97 & 0.50 & 0.60 & 9.17733198126610 $\times  10^{72}$ \\
0.97 & 1.00 & 0.60 & 6.05478107453485 $\times 10^{72}$ \\
0.97 & 5.00 & 0.60 & 2.30524156812013 $\times 10^{72}$ \\
\hline
\end{tabular}
\end{center}

A more detailed analysis can be found in the appendix. 

\subsection{Proof of Theorem~\ref{FillupTheorem}}
\label{FProof}

The analysis of $F_n$ runs along the same lines as for $H_n$. As already mentioned
we will give only a roadmap of the proof since it is actually much easier than 
that of $H_n$.

\subsubsection{Lower bound on $F_n$}

The lower bound on $F_n$ can be proven in two different ways. We can either use 
the inverse Mellin transform integral for $\tilde G_k(n)$ 
\[
k = k_L = \log_{1/q}\log n - (1+\epsilon)\log_{1/q}\log\log n
\]
evaluated at 
$\rho = \log_{p/q}\log n$. This leads to ${\rm Pr}[F_n < k] \le \mu_{n,k} \to 0$.

Alternatively we can use the correspondence between the R\'enyi process
and the random PATRICIA trie construction, along with the relationship between PATRICIA tries
and standard tries.  Because of the path compression step in the construction of a PATRICIA trie
from a trie, the fillup level for a PATRICIA trie is always greater than or equal to the fillup 
level for the associated trie.  Furthermore, it is known (see \cite{park2008}) that the fillup level
in random tries for $p > 1/2$ is, with high probability,
\begin{eqnarray*}
    \log_{1/q} n - \log_{1/q}\log\log n + o(\log\log\log n).
\end{eqnarray*} 
Thus, with high probability, this is also a lower bound for the $F_n$ that we study.

\subsubsection{Upper bound on $F_n$}

The upper bound proof for $F_n$ follows along similar lines to 
the lower bound for $H_n$.
We set
\[
    k = k_U = \log_{1/q} n - (1-\epsilon)\log_{1/q}\log\log n,
\]
and our goal is to show that $\Var[B_{n,k}] = o(\E[B_{n,k}]^2)$. 
First we get an upper bound for $\Var[B_{n,k}]$ in the same way as in the case of $H_n$
(via inverse Mellin transform and Depoissonization) of the form
\[
\Var[B_{n,k}] = O\left(q^{-\epsilon \log_{p/q}\log n \cdot \log{1/q} \log\log n (1 + o(1))}\right).
\]

In order to obtain a corresponding lower bound for $\mu_{n,k} =\E[B_{n,k}]$ we 
use again the explicit representation
\begin{equation}
    \label{PoGkFillupExpression}
    \Po{G}_k(n) = \sum_{j = 0}^k \sum_{m \geq j} \kappa_{m,j} (\mu_{m,j} - \mu_{m,j-1}),
\end{equation}
where 
\begin{eqnarray}
    \kappa_{m,j} 
    &=& \frac{T(-m)n^m}{m!} \sum_{\ell=0}^\infty \frac{(-n)^{\ell}}{\ell!} T(-m-\ell)^{k-j} \nonumber \\
    &=& \frac{T(-m)}{m!} \sum_{r=0}^{k-j} { {k-j}\choose r} (np^r q^{k-j-r})^m \exp(-np^r q^{k-j-r}). \label{eqkapparep2}
\end{eqnarray}
We note that, because $\rho > 0$, there are no contributions from poles, so that the $\ell$-sum begins with $0$, in
contrast to (\ref{eqkapparep}) which leads to the simplified form (\ref{eqkapparep2}).

Our derivation suggests that the main contribution to (\ref{PoGkFillupExpression}) comes from the terms $j = O(1)$ and $m = \rho \cdot p/q + O(1)$.  In this range,
the difference $\mu_{m,j} - \mu_{m,j-1}$ is estimable by the following 
lemma from \cite{magnerknesslszpa2014} (see part (i) of Theorem 2.2 of
that paper).
\begin{lemma}[Precise asymptotics for $\mu_{m,j}$ when $j = O(1)$ and $m\to\infty$]
    \label{MuJO1Lemma}
    For $p > q$, $m \to \infty$, and $j = O(1)$, we have
    \[
        \mu_{m,j} \sim mq^j (1 - q^j)^{m-1}.
    \]
\end{lemma}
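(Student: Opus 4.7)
The plan is to induct on $j$, starting from $j=1$, and observe that under the claimed ansatz for $\mu_{\ell,j-1}$ the sum on the right of the recurrence~(\ref{muRecurrence}) collapses to a simple binomial expression whose dominant term, thanks to $p>q$, is exactly $mq^j(1-q^j)^{m-1}$.

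For the base case $j=1$, the initial conditions $\mu_{\ell,0}=\mathbf{1}[\ell=1]$ make the recurrence solvable exactly, giving
\[
\mu_{m,1}=\frac{m(pq^{m-1}+p^{m-1}q)}{1-p^m-q^m}\sim mq(1-q)^{m-1},
\]
where the equivalence uses $p>q$ so that the $p^{m-1}q$ term dominates $pq^{m-1}$.

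For the inductive step, suppose $\mu_{\ell,j-1}\sim\ell q^{j-1}(1-q^{j-1})^{\ell-1}$ with a sufficiently uniform error (see below). I would first symmetrize~(\ref{muRecurrence}) by mapping $\ell\mapsto m-\ell$ in one of the inner sums, obtaining
\[
(1-p^m-q^m)\,\mu_{m,j}=\sum_{\ell=0}^{m}\binom{m}{\ell}\bigl[p^\ell q^{m-\ell}+p^{m-\ell}q^\ell\bigr]\mu_{\ell,j-1}-(p^m+q^m)\mu_{m,j-1}.
\]
Plugging in the ansatz and applying the differentiated binomial identity $\sum_\ell\binom{m}{\ell}\ell\,x^{\ell-1}y^{m-\ell}=m(x+y)^{m-1}$ twice, once with $(x,y)=(p(1-q^{j-1}),q)$ and once with $(x,y)=(q(1-q^{j-1}),p)$, the leading piece collapses to
\[
mq^{j-1}\bigl[p(1-pq^{j-1})^{m-1}+q(1-q^j)^{m-1}\bigr].
\]
Since $p>q$ forces $pq^{j-1}>q^j$, hence $1-pq^{j-1}<1-q^j$, the first summand is exponentially smaller in $m$ than the second. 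The remaining corrections $(p^m+q^m)\mu_{m,j-1}$ on the right and the factor $(1-p^m-q^m)\to 1$ on the left are likewise negligible, so $\mu_{m,j}\sim mq^j(1-q^j)^{m-1}$, closing the induction.

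The main obstacle is making the induction uniform in $\ell$: the binomial sum in the recurrence concentrates near $\ell\sim mp$ and $\ell\sim mq$, so a merely pointwise $o(1)$ hypothesis on $\mu_{\ell,j-1}$ is not enough. I would address this by carrying along the quantitative bound
\[
\mu_{\ell,j-1}-\ell q^{j-1}(1-q^{j-1})^{\ell-1}=O\!\left(\ell(1-pq^{j-2})^{\ell-1}\right)
\]
as part of the inductive hypothesis; this is precisely the order of the subdominant summand $mq^{j-1}\cdot p(1-pq^{j-1})^{m-1}$ produced at the previous step. Propagating this error through the same binomial calculation yields an error of order $mp(1-p^2q^{j-2})^{m-1}+mq(1-pq^{j-1})^{m-1}$ at level $j$, which is strictly smaller than the target $mq^j(1-q^j)^{m-1}$ whenever $p>q$, so the strengthened estimate is self-consistent and the induction goes through.
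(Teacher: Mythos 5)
The paper does not prove this lemma itself; it cites part~(i) of Theorem~2.2 of \cite{magnerknesslszpa2014}. Your induction is therefore a genuinely self-contained alternative route, and its algebraic core is correct: the symmetrized recurrence, the two applications of $\sum_\ell \binom{m}{\ell}\ell x^{\ell-1}y^{m-\ell}=m(x+y)^{m-1}$ collapsing the right-hand side to $mq^{j-1}\bigl[p(1-pq^{j-1})^{m-1}+q(1-q^j)^{m-1}\bigr]$, and the observation that $p>q$ makes the second summand dominate are all sound, as is the base case.

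There is, however, a genuine gap in the quantitative inductive hypothesis. You propose the bound $\mu_{\ell,j-1}-\ell q^{j-1}(1-q^{j-1})^{\ell-1}=O\bigl(\ell(1-pq^{j-2})^{\ell-1}\bigr)$ and justify it by tracking only the subdominant binomial summand and the propagated error. But the normalization factor $(1-p^\ell-q^\ell)^{-1}$, which you dismiss as negligible, contributes an error of the same relative size as $p^\ell$, and already at the first level this is not absorbed by your claimed bound: from the exact formula $\mu_{\ell,1}=\ell(pq^{\ell-1}+p^{\ell-1}q)/(1-p^\ell-q^\ell)$ one finds $\mu_{\ell,1}-\ell q p^{\ell-1}=\Theta(\ell q^{\ell-1})+\Theta(\ell p^{2\ell-1})$, and the second piece, with base $p^2$, exceeds $q^{\ell-1}$ whenever $p^2>q$, i.e.\ $p>(\sqrt5-1)/2\approx0.618$. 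So for a large subinterval of $(1/2,1)$ your invariant is already false at $j=1$ and cannot propagate. Writing $\beta_j$ for the true error base, the correct recursion is $\beta_j=\max\{1-pq^{j-1},\ q\beta_{j-1}+p,\ p(1-q^j)\}$ with $\beta_0=0$, where the third entry is precisely the normalization term you drop; for $p\le(\sqrt5-1)/2$ this does give $\beta_j=1-pq^{j-1}$, but for larger $p$ one gets $\beta_1=p^2$ and then $\beta_j=1-q^j(1+p)$, which is strictly bigger than $1-pq^{j-1}$. The asymptotic conclusion survives because $\beta_j<1-q^j$ holds in either regime, so the error never catches the main term, but to close the induction rigorously you must carry the correct $\beta_j$ (or at least $\beta_j\le 1-q^j(1+p)$) rather than $1-pq^{j-1}$. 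You should also fix the off-by-one slip in the justification sentence: the subdominant summand produced when passing to level $j-1$ has base $1-pq^{j-2}$, whereas you write the formula for the level-$j$ one.
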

Note, in particular, that $\mu_{m,j} - \mu_{m,j-1}$ is strictly positive in this range.  Applying this lemma, some algebra is required to
show that the contribution of the $(m,j)$th term, with $m = \rho \cdot p/q + O(1)$ and $j = O(1)$, is
\begin{equation}
    \label{DominantTermFillup}
    q^{-\epsilon \log_{p/q}\log n \cdot \log_{1/q}\log\log n (1 + o(1))}.
\end{equation}
To complete the necessary lower bound on the entire sum (\ref{PoGkFillupExpression}), we consider also the following sums:
\begin{eqnarray}
    \sum_{j=0}^{j'}\sum_{m = j}^{m'} \kappa_{m,j} (\mu_{m,j} - \mu_{m,j-1}) \quad\mbox{and}\quad
    \sum_{j > j'}\sum_{m \geq j} \kappa_{m,j} (\mu_{m,j} - \mu_{m,j-1}),
\end{eqnarray}
where $j'$ and $m'$ are sufficiently large fixed positive numbers.  We note that the terms that are not covered by any of these 
sums may be disregarded, since by Lemma~\ref{MuJO1Lemma} they are non-negative.

It may be shown that both sums are smaller than the dominant term (\ref{DominantTermFillup}) by a factor of $e^{-\Theta(\rho)}$,
both by upper bounding terms in absolute value and using the trivial bound $|\mu_{m,j} - \mu_{m,j-1}| \leq 2m$.

We thus arrive at
\begin{equation}\label{eqmunkfilluplevel}
\mu_{n,k} \ge  q^{-\epsilon \log_{p/q}\log n \cdot\log_{1/q}\log\log n (1 + o(1)}.
\end{equation}
Since this tends to $\infty$ with $n$, combining this with the upper bound for the variance 
yields the desired upper bound on $\Pr[F_n > k]$, which establishes the upper bound on $F_n$.

\section{Depoissonization}\label{sec:depo}

\subsection{Analytic Depoissonization}

The Poisson transform $\tilde G(z)$ of a sequence
$g_n$ is defined by $\tilde G(z) = \sum_{n\ge 0} g_n \frac{z^n}{n!} e^{-z}$. 
If the sequence $g_n$ is {\it smooth enough} then we usually have 
$g_n \sim \tilde G(n)$ (as $n\to\infty$) which we call {\it Depoissonization}.
In \cite{jacquetszpa1997} a theory for {\it Analytic Depoissonization} is developed. For example,
the basic theorem (Theorem 1) says that if 
\begin{eqnarray}\label{eqDep1}
|\tilde G(z)| \le B |z|^\beta
\end{eqnarray}
for $|z|> R$ and $|\arg(z)| \le \theta$ (for some $B> 0$, $R>0$, and $0< \theta < \pi/2$) 
and 
\begin{eqnarray}\label{eqDep2}
|\tilde G(z)e^z| \le A e^{\alpha|z|}
\end{eqnarray}
for $|z|> R$ and $\theta <|\arg(z)| \le \pi$ (for some $A> 0$ and $\alpha < 1$) then
\begin{eqnarray}\label{eqDep3}
g_n = \tilde G(n) + O(n^{\beta-1}).
\end{eqnarray}
Actually this expansion can be made 
more precise by taking into account derivatives of $\tilde G(z)$.
For example, we have 
\begin{eqnarray}\label{eqDep3.2}
g_n = \tilde G(n) - \frac n2 \tilde G''(n)+  O(n^{\beta-2}).
\end{eqnarray}

In \cite[Lemmas 1 and 18]{magnerspa2015} it is shown that $\tilde G_k(z) = \sum_{n\ge 0} \mu_{n,k} \frac{z^n}{n!} e^{-z}$
satisfies (\ref{eqDep1}) with $\beta = 1+ \epsilon$ for any $\epsilon> 0$ and  
(\ref{eqDep2}) for some $\alpha < 1$ uniformly for
all $k\ge 0$. Thus, it follows uniformly for all $k\ge 0$ 
\begin{eqnarray}\label{eqDep3.3}
\mu_{n,k} = \tilde G_k(n)  - \frac n2 \tilde G_k''(n)+  O(n^{\epsilon-1}).
\end{eqnarray}
The estimate (\ref {eqDep3}) is not sufficient for our purposes 
(it only works if $\mu_{n,k}$ grows at least polynomially as in the 
{\it central range}). For the boundary region, where 
$k \sim \log_{1/p} n$ or $k \sim \log_{1/q} n$ we have to use (\ref{eqDep3.3}) which means 
that we have to deal with 
derivatives of $\tilde G_k(z)$, too.

\subsection{Poisson Variance}

Next we discuss how the variance of a random variable can be handled with the help
of the Poisson transform. First we assume that $\tilde G(z)$ is the Poisson transform
of the expected values $\mu_n = \E[X_n]$ or a sequence of random variables.
Furthermore we set 
\[
\tilde V(z) = \sum_{n\ge 0} \E[X_n^2]  \frac{z^n}{n!} e^{-z} - \tilde G(z)^2
\]
which we denote the Poisson variance. This is not the Poisson transform of the variance.
However, since we usually have $\E[X_n^2] \sim V(n) + G(n)^2$ and $\E[X_n] \sim G(n)$
it is expected that $\Var[X_n] \sim V(n)$. Actually this can be made precise with the
help of (\ref{eqDep3.2}). Suppose that $\tilde G(z)$ and  $\tilde V(z)$ satisfy the
property (\ref{eqDep1}) and that $\tilde G(z)$ and  $\tilde V(z) + \tilde G(z)^2$ the
property (\ref{eqDep2}). Then it follows that 
\[
\E[X_n] = \tilde G(n) - \frac n2 \tilde G''(n)+  O(n^{\beta-2})
\]
and
\[
\E[X_n^2] = \tilde V(n) + \tilde G(n)^2 - \frac n2 \tilde V''(n)
- n (\tilde G'(n))^2 - n \tilde G(n)\tilde G''(n) +  O(n^{\beta-2})
\]
from which it follows that 
\begin{eqnarray}
\Var[X_n] &= \tilde V(n) - \frac n2 \tilde V''(n) - n (\tilde G'(n))^2 +
\frac 14 n^2  (\tilde G''(n))^2 \nonumber \\
&+ O(n^{2\beta -4}) +  O( n^{\beta-2} \tilde G(n)) +  O( n^{\beta} \tilde G''(n)).
\label{eqVarest}
\end{eqnarray}
In particular in our case we know that the Poisson transform 
$\tilde G_k(z)$ (of the sequence $\mu_{n,k} = \E[B_{n,k}]$) and the 
corresponding Poisson variance $\tilde V_k(z)$ satisfy the assumptions
for $\beta =1 + \epsilon$ (for every fixed $\epsilon> 0$), see
\cite{magnerspa2015}. Thus we also obtain (\ref{eqVarest}) in the 
present context.

\section{An Unexpected Identity}\label{secmiracle}

In this final section we prove that $D(p) = 0$ which seems to be a new 
(and unexpected) identity.\footnote{The following simple proof is due to 
Gleb Pogudin (Univ. Linz).}

\begin{lemma}\label{LeDp=0}
Suppose that $|p|<1$ and $q = 1-p$ and set
\begin{eqnarray} \label{eqDefAp-0}
D(p) = \sum_{L,M \ge 0} \xi_{L+1}\frac{(-1)^{M}}{M!} p^{((L+M)^2 + L-M)/2} q^{-L-M},
\end{eqnarray}
where $\xi_{\ell} = \xi_\ell(p)$ is recursively defined by $\xi_1 = 1$ and 
    \begin{equation}\label{eqexrec2-0}
      \xi_\ell
        = q^{-1}p^{\ell} \sum_{J=1}^\ell \frac{\xi_{\ell+1-J}}{J!} (q/p)^{J}.
     \end{equation}
Then
\begin{equation}\label{eqLeDp=0}
D(p) =  0.
\end{equation}
\end{lemma}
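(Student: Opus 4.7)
Write $\phi_L := \xi_{L+1}$ and introduce the ordinary generating function $\hat\Phi(z) := \sum_{L\ge 0}\phi_L z^L$. The first step is to translate the recurrence~(\ref{eqexrec2-0}) into a functional equation: multiplying the recurrence by suitable powers of $z$ and summing gives
\[
    \hat\Phi(z) = \frac{e^{qz}-1}{qz}\,\hat\Phi(pz),
\]
which iterates into the entire product $\hat\Phi(z) = \prod_{j\ge 0}(e^{qp^j z}-1)/(qp^j z)$. This step is routine bookkeeping.

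Next I would collapse the definition of $D(p)$ into a coefficient extraction. Observing that $((L+M)^2+L-M)/2 = \binom{L+M}{2} + L$ and grouping~(\ref{eqDefAp-0}) by $N := L+M$,
\[
    D(p) = \sum_{N\ge 0} p^{\binom{N}{2}}q^{-N}\,c_N, \qquad c_N := [z^N]\,\hat\Phi(pz)e^{-z}.
\]
Because $p^{\binom{s}{2}}$ decays superexponentially in $|s|$, the bilateral theta series $\Theta(z) := \sum_{s\in\mathbb{Z}} p^{\binom{s}{2}}q^{-s}z^{-s-1}$ converges for all $z\ne 0$; by the Jacobi triple product,
\[
    z\,\Theta(z) = \prod_{m\ge 1}(1-p^m)\Bigl(1+\tfrac{p^{m-1}}{qz}\Bigr)(1+p^m qz).
\]
The negative-$s$ part of $\Theta$ is entire, so contributes nothing to the residue at $z=0$ when multiplied by the entire function $\hat\Phi(pz)e^{-z}$; one obtains
\[
    D(p) = \mathrm{Res}_{z=0}\,\hat\Phi(pz)\,e^{-z}\,\Theta(z).
\]

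To conclude $D(p) = 0$, I would exploit the self-similarity arising from combining the quasi-periodicity $\Theta(pz) = (p^2qz)^{-1}\Theta(z)$ (read off from the product form) with the functional equation $\hat\Phi(pz) = \tfrac{e^{qpz}-1}{qpz}\hat\Phi(p^2 z)$. Setting $H(z) := \hat\Phi(pz)e^{-z}\Theta(z)$, this yields
\[
    H(z) = p\,(e^{qpz}-1)\,e^{-qz}\,H(pz).
\]
The multiplier $K(z) := p(e^{qpz}-1)e^{-qz}$ vanishes at $z=0$ (in fact $K(z) = p^2qz + O(z^2)$), so each iteration of the identity introduces an extra vanishing factor, which should force any residue at $0$ to cancel. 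The main obstacle is making this step rigorous: $H$ has an essential singularity at $0$, so one cannot naively pass to the limit $n\to\infty$ in $H(z) = \prod_{j=0}^{n-1}K(p^j z)\cdot H(p^n z)$. I would instead convert the argument into a contour deformation along a sequence of expanding contours $|z|=R_n$ chosen to avoid the zeros of $\Theta$ at $z=-p^j/q$, using the exponential decay of $e^{-z}$ in the right half-plane together with theta-type growth estimates for $\hat\Phi$ and $\Theta$ to close the contour integral and conclude $D(p) = 0$.
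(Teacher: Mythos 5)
Your first steps are all correct: the functional equation for $\hat\Phi$, the coefficient rewrite of $D(p)$, the triple-product form, the quasi-periodicity $\Theta(pz)=(p^2qz)^{-1}\Theta(z)$, the residue identification $D(p)=\mathrm{Res}_{z=0}H(z)$, and the functional equation $H(z)=K(z)H(pz)$ with $K(z)=p(e^{qpz}-1)e^{-qz}$. This is a genuinely different route from the paper's. But the concluding step is a real gap, not a technicality. The heuristic that the order-$n$ zero of $\prod_{j<n}K(p^jz)$ forces the residue to cancel is unsound: $H(p^nz)$ has an essential singularity at $0$ with nonzero Laurent coefficients at arbitrarily negative orders, so multiplying by a function with a high-order zero at $0$ does not kill the $z^{-1}$ coefficient of the product; indeed $\prod_{j<n}K(p^jz)\cdot H(p^nz)$ equals $H(z)$ exactly for every $n$, so iterating carries no information. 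The circular-contour replacement also fails on its own terms: in the left half-plane $|e^{-z}|$ is of size $e^{|z|}$, the depoissonization bound only gives $|\hat\Phi(pz)|=O(e^{\alpha p|z|}/|z|)$ there with $\alpha<1$ (not enough to beat $e^{|z|}$), and $|\Theta(z)|$ grows like $\exp(\log^2|z|/(2\log(1/p)))$, so $|H(z)|$ grows exponentially on a portion of every circle $|z|=R$ and $\oint_{|z|=R}H\,dz$ cannot be driven to $0$ as $R\to\infty$.

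What actually closes your residue route is the same factorization the paper exploits, translated into your language. Set $\tilde q(w) := (1-e^{-qw})/(qw)$ and $\tilde Q(w):=\prod_{j\ge 0}\tilde q(p^jw)$. A short computation gives $\hat\Phi(pz)e^{-z} = e^{-qz}\tilde Q(pz)$, and then $e^{-qz}=1-qz\,\tilde q(z)$ together with $\tilde Q(z)=\tilde q(z)\tilde Q(pz)$ yields $\hat\Phi(pz)e^{-z}=\tilde Q(pz)-qz\,\tilde Q(z)$. Setting $G(z):=z\tilde Q(z)\Theta(z)$ and using your quasi-periodicity in the form $\Theta(z)=p^2qz\,\Theta(pz)$ turns this into $H(z)=pq\,G(pz)-q\,G(z)$, whence $\mathrm{Res}_{z=0}H = q\left(\mathrm{Res}_{z=0}G-\mathrm{Res}_{z=0}G\right)=0$, because the residue at $0$ is invariant under the rescaling $z\mapsto pz$. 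The paper's proof realizes exactly this telescope directly at the coefficient level, writing $F_n = p^nQ_n - Q_{n-1}$ and invoking ${n+1\choose 2}={n\choose 2}+n$, thereby dispensing with the theta function and residues entirely.
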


\begin{proof}
By setting $L+M = n$ then we can rewrite $D(p)$ as 
\[
D(p) = \sum_{n\ge 0} p^{ n\choose 2} \sum_{L=0}^{n} \xi_{L+1} (p/q)^L \frac{(-1)^{(n-L)}}{(n-L)!} q^{-(n-L)}.
\]
Since the recurrence (\ref{eqexrec2-0}) for $\xi_\ell$ can be rewritten to 
\[
X(z) = \sum_{L\ge 0} \xi_{L+1} z^L = \prod_{j\ge 0} \frac{e^{qp^j z} -1 }{qp^j z}
\]
we thus obtain
\[
D(p) = \sum_{n\ge 0} p^{ n\choose 2} [z^n] X((p/q)z) e^{-z/q} = 
\sum_{n\ge 0} p^{ n\choose 2} [z^n] \prod_{j\ge 0} \frac{e^{(p-1)p^j z} - e^{-p^j z }}{p^{j+1} z}.
\]
Hence, if we set $f(z) = \frac 1{pz} \left( e^{(p-1)z} - e^{-z} \right)$, 
$F(z) = f(z) f(pz) f(p^2z)\cdots$,  and $F_n = [z^n] F(z)$ 
then $D(p) = 0$ is equivalent to $\sum_{n\ge 0} F_n p^{n\choose 2} = 0$.

We next set $g(z) = e^{-z}$, $h(z) = (e^z-1)/z$, and $q(z) = (1-e^{-z})/z$. Then
we have $f(z) = g(z)h(pz)$ and $q(z) = g(z)h(z)$ which implies the representation
\[
F(z) = \prod_{j\ge 0} g(p^j z) h(p^{j+1} z) = g(z) \prod_{j\ge 1} g(p^j z) h(p^{j} z) = g(z) \prod_{j\ge 1} q(p^j z).
\]
Hence, if we set  $Q(z) = q(z) q(pz) q(p^2z)\cdots$,  and $Q_n = [z^n] Q(z)$ then we also have
\[
F(z) = g(z) Q(pz) = (1-zq(z)) Q(pz) = Q(pz) - z Q(z) = \sum_{n\ge 0} Q_n(p^n z - z^{n+1}).
\]
So, finally, if we use the substitution $z^n \mapsto p^{n\choose 2}$ and the property ${ n+1 \choose 2} = {n\choose 2} + n$,
we immediately see that every summand vanishes. This proves $D(p) = 0$.
\end{proof}

%


\appendix

\def\Co{\mathcal{C}}
\def\Arg{\mathrm{Arg}}

\section{Extension of Lemma~\ref{KnesslLemma}}
\label{KnesslLemmaProof}
We prove here an extended version of Lemma~\ref{KnesslLemma}
and provide a more detailed analysis of the quantities $\xi_{\ell}(n)$ and $\xi_\ell$.

\begin{lemma}
    \label{KnesslLemma1}
    \label{KNESSLLEMMA1}         
    \label{mu_mjUpperBoundLemma1}
    Let $p \geq q$.
    For $n\to\infty$ and $1\le k < n$ with $\log^2 (n-k) = o(k)$,
$$
        \mu_{n,k}
        = (n-k)^{3/2 + \frac{\log q}{\log p}}
\frac{n!}{(n-k)!}p^{k^2/2 + k/2}q^{k} \cdot
\exp\left( -\frac{\log^2(n-k)}{2\log(1/p)} \right)\Theta(1).
$$
\end{lemma}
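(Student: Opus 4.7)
The plan is to start from the exact identity
\[
\mu_{n,k} = n!\,C_*(p)\,p^{k(k+1)/2}\,q^k\,\xi_\ell(n), \qquad \ell = n-k,
\]
derived in the preamble to Lemma~\ref{KnesslLemma}, and to reduce the claim to two-sided control of $\xi_\ell(n)$. First I would establish uniform convergence $\xi_\ell(n) = \xi_\ell\,(1+o(1))$ in the regime $\log^2 \ell = o(k)$: iterating the recurrence (\ref{eqxirec}) and applying the approximation estimate (\ref{eqxiapprox}) yields $|\xi_\ell(n) - \xi_\ell| \leq C(p^k + (q/p)^k)/(\ell-1)!$, which is negligible relative to the expected size of $\xi_\ell$ under the hypothesis. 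This reduces the problem to sharp asymptotics of the limiting sequence $\xi_\ell$.

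For that, I would exploit the generating-function identity
\[
X(z) := \sum_{L \geq 0} \xi_{L+1} z^L = \prod_{j\geq 0}\phi(qp^j z), \qquad \phi(u) = \frac{e^u - 1}{u},
\]
derived in Section~\ref{secmiracle}, and extract $\xi_\ell = [z^{\ell-1}] X(z)$ via the saddle-point method. The key analytical step is a careful expansion of $\log X(z)$ on the positive real axis: splitting the product at the threshold $j^* = \lfloor \log(qz)/\log(1/p)\rfloor$ (where $qp^{j^*} z \sim 1$), applying $\log\phi(u) = u - \log u + O(e^{-u})$ for $u \geq 1$ and $\log\phi(u) = u/2 + O(u^2)$ for $u < 1$, and summing the resulting geometric and arithmetic progressions together with the identity $\sum_j qp^j z = z$, yields
\[
\log X(z) = z - \frac{\log^2(qz)}{2\log(1/p)} - \frac{\log(qz)}{2} + H\!\left(\left\{\tfrac{\log(qz)}{\log(1/p)}\right\}\right) + o(1),
\]
where $H\colon[0,1)\to\mathbb{R}$ is a bounded continuous function encoding the fractional-part contribution.

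I would then apply the saddle-point method to $\xi_\ell = \frac{1}{2\pi i}\oint X(z) z^{-\ell} dz$ on a circle of radius $z_0 = \ell + O(\log \ell)$ solving $z\,(\log X)'(z) = \ell - 1$. Substituting the expansion of $\log X(z_0)$, using the algebraic identity $-\log^2(qz_0)/(2\log(1/p)) = -\log^2\ell/(2\log(1/p)) + (\log q/\log p)\log \ell + O(1)$, applying Stirling's formula to convert $e^{z_0}/z_0^{\ell-1}$ into $1/\ell!$, and bounding the Gaussian tails in the standard way produces the claimed polynomial prefactor (up to the $\Theta(1)$ factor $e^H$) times the exponential. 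Multiplication by $n!\,C_*(p)\,p^{k(k+1)/2}\,q^k$ then yields the stated expression for $\mu_{n,k}$.

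The main obstacle is twofold. On the saddle-point side, obtaining matched upper and lower constants (rather than just an $O$-bound) requires showing that $H$ is bounded both above and below as $\{a\}$ ranges over $[0,1)$, so that $e^H$ is uniformly $\Theta(1)$, and that the tails of the contour integral away from the saddle are negligible despite the $\log^2 z$ term in $\log X$. On the combinatorial side, the replacement $\xi_\ell(n) \leadsto \xi_\ell$ must be uniform in $\ell$, whereas Lemma~\ref{KnesslLemma}(i) only gives convergence for fixed $\ell$; here the hypothesis $\log^2 \ell = o(k)$ is precisely what ensures that the accumulated factorial growth from iterating (\ref{eqxirec}) is outweighed by the geometric decay $p^k + (q/p)^k$.
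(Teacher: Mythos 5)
Your proposal is correct and is, up to a change of normalization, the paper's own argument: since $X(z) = e^z\Po{\xi}(z)/z$, your saddle point for $\xi_\ell = [z^{\ell-1}]X(z)$ at $z_0 \approx \ell$ is the same computation as the paper's analytic depoissonization of $\Po{\xi}(z) = z\prod_{j\geq 0}\frac{1-e^{-qp^jz}}{qp^jz}$, including the split of the infinite product at $j^*\approx\log_{1/p}\ell$ that produces the $\exp\bigl(-\log^2\ell/(2\log(1/p))\bigr)$ term and the $\alpha_\ell$-dependent oscillation. The three obstacles you flag at the end --- uniform two-sided bounds on the bounded oscillating prefactor, tail control in the Cauchy integral, and uniformity of the $\xi_\ell(n)\to\xi_\ell$ replacement under $\log^2\ell = o(k)$ --- are exactly what the paper addresses via the a-priori estimates (\ref{eqxielln-est}) and the verification of the growth conditions (\ref{eqDep1-new})--(\ref{eqDep2-new}).
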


    We first work out the case $p> q$. The case $p=q = \frac 12$ is slightly easier and
    will be discussed below.
   
    We recall the definition of the quantity
$
        S_\ell(n) = \mu_{n,n-\ell}.
$
With this notation the quantities $\xi_\ell(n)$ are defined by
    \begin{equation}
        S_\ell(n) = n! C_*(p) p^{(n-\ell)^2/2 + (n-\ell)/2}q^{n-\ell} \xi_{\ell}(n).
        \label{S_ellDefinition}
    \end{equation}
where
$$
        C_*(p) = \prod_{j=2}^\infty (1-p^j - q^j)^{-1} \cdot (1 + (q/p)^{j-2}).
$$
By defintion and simple computation 
 we have $\xi_{1}(n) \sim 1$ and $\xi_{2}(n) \sim 1/2$.

    Our task now is to determine the asymptotic behavior of 
$\xi_{\ell}(n)$ as $\ell\to\infty$.  From the recurrence (\ref{muRecurrence}) 
for $\mu_{n,k}$, we can derive a corresponding one for $\xi_\ell(n)$:
    \begin{equation}
        \xi_{\ell}(n)(1-p^n-q^n) = 
     \sum_{J=1}^\ell \frac{\xi_{\ell+1-J}(n-J)}{J!}q^{-1}p^{\ell-n}(p^{n-J}q^{J} 
+ p^{J}q^{n-J}).
    \label{eqxielln-rec}
    \end{equation}
    Note that this recurrence uniquely defines $\xi_\ell(n)$ given 
$\xi_1(1) = 1/C_*(p)$.  First the recurrence provides $\xi_1(n)$ for all $n\ge 2$. 
Then we get $\xi_2(3)$  (recall that $\xi_2(1) = \xi_2(2) = 0$) 
and recursively $\xi_2(n)$ for $n\ge 4$; etc.
    
    From this recurrence it follows immediatly that for each $\ell\ge 1$ the limit
    $\xi_\ell = \lim_{n\to\infty} \xi_\ell(n)$
    exists. Clearly this limits satisfy the recurrence
$$
        \xi_\ell 
        = q^{-1}p^{\ell} \sum_{J=1}^\ell \frac{\xi_{\ell+1-J}}{J!} (q/p)^{J}
$$
    which we will analyze separately in the sequel.
 
\subsection{A-priori bounds for $\xi_\ell(n)$}  

    We will first prove an a-priori bound for $\xi_\ell(n)$ and also an error bound for
    the difference $\xi_\ell(n) - \xi_\ell$:
    \begin{equation}
    \xi_\ell(n) = O\left( \frac 1{\ell !} \right), \qquad 
    \xi_\ell(n) = \xi_\ell + O\left( \frac{p^{n-\ell} + 
(q/p)^{n-\ell}}{(\ell-1)!} \right),
    \label{eqxielln-est}
    \end{equation}
    where the implied constants depend on $p$.
    Both inequalities can be shown by induction by applying it recursively to 
    (\ref{eqxielln-rec}). Note first that (\ref{eqxielln-rec}) is only relevant
    for $n> \ell$ since $\xi_\ell(n) = 0$ for $n\le \ell$.
    
    We choose $\ell_0 \ge 1$ in a way that 
\[
\frac{p^{\ell_0}/q(e^{q/p}-1) + (q/p)^{\ell_0}/q(e^{p/q}-1)}{1-p^{\ell_0}-q^{\ell_0}} 
\le 1.
\]
and let $C_0$ be an upper bound for $\xi_\ell(n)$ for $\ell < \ell_0$ and 
all $n \ge 1$.  Then it follows by induction that $\xi_\ell(n) \le C_0$ 
for all $\ell \ge 1$ and $n\ge 1$.
We just have to observe that for $n> \ell \ge \ell_0$
\begin{eqnarray*}
\xi_\ell(n) &\le& \frac{C_0}{1-p^n-q^n} \left( \sum_{J=1}^{\ell} 
\frac{p^{\ell-J}q^{J-1}}{J!} + 
\sum_{J=1}^{\ell} \frac{p^{\ell+J-n}q^{n-J-1}}{J!}  \right) \\
&\le & \frac{C_0}{1-p^n-q^n} \left( \frac{p^\ell}q (e^{q/p}-1) + 
\frac 1q \left( \frac qp \right)^n  (e^{p/q}-1) \right) \\
&\le & \frac{C_0}{1-p^{\ell_0}-q^{\ell_0}} \left( \frac{p^{\ell_0}}q (e^{q/p}-1) + 
\frac 1q\left( \frac qp \right)^{\ell_0}  (e^{p/q}-1) \right) \\
&\le C_0.
\end{eqnarray*}

Next we prove the first inequality of (\ref{eqxielln-est}). 
Here we fix $\ell_1$ in a way that
\[
\frac 1{(\ell_1+1)(1-p^{\ell_1}-q^{\ell_1})} \left( \frac 1{pq} + 
\frac 1{q^2} \right) \le 1
\]
and set $C_1 = C_0 \ell_1!$. Then we automatically have 
$\xi_\ell(n) \le C_1/\ell!$ for $\ell \le \ell_1$ and
$n\ge 1$.  Furthermore we obtain by induction for $n>\ell > \ell_1$
\begin{eqnarray*}
\xi_\ell(n) &\le& \frac{C_1}{1-p^n-q^n} \left( \sum_{J=1}^{\ell} 
\frac{p^{\ell-J}q^{J-1}}{J!(\ell+1-J)!} + 
\sum_{J=1}^{\ell} \frac{p^{\ell+J-n}q^{n-J-1}}{J!(\ell+1-J)!}  \right) \\
&\le & \frac{C_1}{(\ell+1)!(1-p^n-q^n)} \\
&\times &\left( \frac 1{pq} \sum_{J=0}^{\ell+1} 
{\ell+1 \choose J} p^{\ell+1-J}q^{J} + \frac{(q/p)^{n-\ell}}{q^2} 
\sum_{J=0}^{\ell+1}   {\ell+1 \choose J} p^{J}q^{\ell+1-J}  \right) \\
&\le & \frac{C_1}{\ell!} \frac 1{(\ell_1+1)(1-p^{\ell_1}-q^{\ell_1})} 
\left( \frac 1{pq} + \frac{1}{q^2} \right) \\
&\le & \frac{C_1}{\ell!}.
\end{eqnarray*}

Finally we deal with the second inequality of (\ref{eqxielln-est}). 
Since 
\[
\xi_1(n) = \prod_{j> n} \frac{1-p^j-q^j}{1 + (q/p)^{j-2}}
\]
it is certainly true for $\ell =1$. Now it is an easy exercise to verify it 
for $\ell=2$, $\ell = 3$ etc.
by adapting possibly the implicit constant for each $\ell$.
For sufficiently large $\ell \ge \ell_2$ we can do a common inductive step 
because of the following calculations:
\begin{eqnarray*}
&&|\xi_\ell(n) - \xi_\ell| \\
&\le &
\left| \frac 1{1-p^n-q^n}\sum_{J=1}^\ell \frac{\xi_{\ell+1-J}(n-J)}{J!}q^{-1}
p^{\ell-n}(p^{n-J}q^{J} + p^{J}q^{n-J})  
- q^{-1}p^{\ell} \sum_{J=1}^\ell \frac{\xi_{\ell+1-J}}{J!} (q/p)^{J}  \right| \\
&\le& \left| \frac 1{1-p^n-q^n}\sum_{J=1}^\ell \frac{|\xi_{\ell+1-J}(n-J)-
\xi_{\ell+1-J}|}{J!}q^{-1}p^{\ell-n}(p^{n-J}q^{J} + p^{J}q^{n-J}) \right| \\
&+& \left| \frac 1{1-p^n-q^n}\sum_{J=1}^\ell \frac{\xi_{\ell+1-J}}{J!}p^{\ell-J}q^{J-1} 
- \sum_{J=1}^\ell \frac{\xi_{\ell+1-J}}{J!} p^{\ell-J}q^{J-1}   \right| \\
&+& \left| \frac 1{1-p^n-q^n}\sum_{J=1}^\ell 
\frac{\xi_{\ell+1-J}}{J!}p^{J+\ell-n}q^{n-J-1} \right| \\
 &=& D_1 + D_2 + D_3.
\end{eqnarray*}
By assumption  we have 
\[
|\xi_{\ell+1-J}(n-J)-\xi_{\ell+1-J}| \le C(p^{n-\ell-1} + (q/p)^{n-\ell-1})/(\ell-J)!,
\] 
where we can assume without loss of generality that $C\ge 1$. 
So we can estimate $D_1$ by
\begin{eqnarray*}
D_1 &\le&  \frac C{1-p^n-q^n}\sum_{J=1}^\ell \frac{p^{n-\ell-1} + 
(q/p)^{n-\ell-1}}{J!(\ell-J)!}q^{-1}p^{\ell-n}(p^{n-J}q^{J} + p^{J}q^{n-J}) \\
&\le & \frac C{\ell!(1-p^n-q^n)} \left( \frac{p^{n-\ell}}{pq} + 
\frac{p(q/p)^{n-\ell}}{q^2} \right) 
\left(  1  + (q/p)^{n-\ell} \right) \\
&\le& \frac{C p^{n-\ell}}{(\ell-1)!} \frac 2{pq(1-p^n-q^n)\ell} + 
\frac{C (q/p)^{n-\ell}}{(\ell-1)!} \frac {2p}{q^2(1-p^n-q^n)\ell}.
\end{eqnarray*}
Furthermore by using the inequality $\xi_\ell \le C_1/\ell!$ and 
the assumption $C\ge 1$ we obtain
$$
D_2 = \frac{p^n+q^n}{1-p^n-q^n} \xi_\ell 
\le \frac{C p^{n-\ell}}{(\ell-1)!} \frac {2C_1}{(1-p^n-q^n)\ell} 
$$
and
\begin{eqnarray*}
D_3 &\le& \frac {C_1}{1-p^n-q^n} \sum_{J=1}^\ell 
\frac{p^{-n} q^{n-1} }{J!(\ell-J)!} p^J q^{\ell-J} \\
&\le& {C_1 (q/p)^n}{\ell! q(1-p^n-q^n)} \\
&\le & \frac{C (q/p)^{n-\ell}}{(\ell-1)!} \frac {C_1}{q(1-p^n-q^n)\ell}.
\end{eqnarray*}
Hence, if $\ell \ge l_2$, where $\ell_2$ satisfies
\[
\frac 1{\ell_2(1-p^{\ell_2}-q^{\ell_2})} \left( \frac 2{pq} + 2C_1 \right) \le 1 
\]
and
\[
\frac 1{\ell_2(1-p^{\ell_2}-q^{\ell_2})} \left( \frac {2p}{q^2} + 
\frac{C_1}q \right) \le 1 ,
\]
we obtain the second inequality of (\ref{eqxielln-est}) 
for all $\ell\ge 1$ and $n\ge 1$.

\subsection{Asymptotics for $\xi_\ell$}

    In order to obtain asymptotics for $\xi_\ell$ we use the 
Poisson transform of $\ell! \xi_{\ell}$ 
(\emph{not} of $\xi_{\ell}$), which we denote by 
    $\Po{\xi}(z)$.  The functional equation is
$$
        \Po{\xi}(z) = \Po{\xi}(pz) \frac{1-e^{-qz}}{pqz}.
$$
    This may be iterated and  produces the explicit formula
    \begin{equation}
        \Po{\xi}(z) = z\prod_{j=0}^\infty \left( \frac{1 - e^{-qp^jz}}{qp^jz} \right),
        \label{PoXiFormula}
    \end{equation}
    which also shows that $\Po{\xi}(z)$ is entire.  Cauchy's integral formula then gives
$$
        \ell!\cdot \xi_{\ell}
        = \frac{1}{2\pi i} \oint_{\Co} \frac{e^{z}}{z^\ell}
\prod_{j=0}^\infty \left( \frac{1 - e^{-qp^jz}}{qp^jz} \right)\dee{z},
$$
    for a simple, closed contour $\Co$ encircling the origin. 
Actually we use the circle contour $|z| = \ell$.
    It follows then in precisely the same way as in  
\cite{jacquetszpa1997} that for $\ell \to \infty$,
    \begin{equation} 
        \xi_{\ell}
        \sim \frac{\Po{\xi}(\ell)}{\ell!} = \frac{1}{(\ell-1)!} 
        \exp\left[ \sum_{j=0}^\infty 
\log\left( \frac{1-e^{-qp^j\ell}}{qp^j\ell} \right) \right].
        \label{sum1}
    \end{equation}
    
    Our next task is to justify the this step which can be seen 
as a depoissonization step.  Let us first study the asymptotic 
behavior of $\Po{\xi}(\ell)$
   
   
   We reindex the sum by setting $j = \floor{\log_{1/p} \ell} + J$, so 
that, for a fixed $J$, $p^j\ell = O(1)$ as $\ell\to\infty$.
    Then $j = \log_{1/p}\ell + J - \fracpart{\log_{1/p}(\ell)}$, 
where $\fracpart{x}$ denotes the fractional
    part of $x$.  Defining $\alpha_{\ell} = \fracpart{\log_{1/p}\ell}$, we get
$$
              \Po{\xi}(\ell)= \ell \exp\left[
            \sum_{J=-\floor{\log_{1/p}\ell}}^{\infty} 
\log\left( \frac{1-e^{-qp^Jp^{-\alpha_\ell}}}{qp^Jp^{-\alpha_\ell}} \right).
        \right]
$$
    The sum then becomes
    \begin{eqnarray}
        &&\sum_{J=-\floor{\log_{1/p}\ell}}^{\infty} 
\log\left( \frac{1-e^{-qp^Jp^{-\alpha_\ell}}}{qp^Jp^{-\alpha_\ell}} \right)
        =  \nonumber \\
        &&\sum_{J=0}^\infty \log \left( 
\frac{1-e^{-qp^Jp^{-\alpha_\ell}}}{qp^Jp^{-\alpha_\ell}} \right)
        + \sum_{J=1}^\floor{\log_{1/p} \ell} \left[ 
\log(1-e^{-qp^{-J}p^{-\alpha_\ell}}) - \log q + \alpha_\ell \log p + 
J\log p \right] \nonumber \\
        &\sim& \floor{\log_{1/p} \ell}[\alpha_\ell \log p - \log q] + 
\floor{\log_{1/p} \ell} (\floor{\log_{1/p}\ell} + 1) 
\frac{1}{2} \log p \label{nonSummation} \\
        &+& \sum_{J=1}^\infty \log(1-e^{-qp^{-J}p^{-\alpha_\ell}}) + 
\sum_{J=0}^\infty \log \left( \frac{1-e^{-qp^Jp^{-\alpha_\ell}}}{qp^Jp^{-\alpha_\ell}} 
\right). \nonumber
    \end{eqnarray}
    The expression (\ref{nonSummation}) can be rewritten as
    \begin{eqnarray*}
        &&(\log_{1/p} \ell - \alpha_\ell)(\alpha_\ell \log p - \log q) + 
\frac{1}{2}\log p(\log_{1/p} \ell - \alpha_\ell) (\log_{1/p} \ell + 1 - \alpha_\ell) \\
        &=& \frac{1}{2}(\log_{1/p} \ell)^2 \log p + (\log_{1/p} \ell)
(-\log q + \frac{1}{2}\log p) - \alpha^2_\ell \log p \\
&+& \frac{1}{2}(\log p)\alpha_\ell(\alpha_\ell - 1) 
        + \alpha_\ell \log q,
    \end{eqnarray*}
    so that, finally,
         \begin{eqnarray*}
        \Po{\xi}(\ell)& =& \ell \prod_{J=0}^\infty \left( 
\frac{1 - e^{-qp^Jp^{-\alpha_\ell}}}{qp^Jp^{-\alpha_\ell}} \right) 
        \prod_{J=1}^\infty (1-e^{-qp^{-J}p^{-\alpha_\ell}}) \\
        & &\qquad \times \exp\left[ -\frac{\log^2 \ell}{2\log(1/p)} \right] 
\ell^{1/2+\log q / \log p}
        \exp\left[-\frac{\alpha_\ell(\alpha_\ell + 1)}{2}\log p \right] 
e^{-\alpha_\ell \log q} \\
        &=&  \ell^{3/2+\log q / \log p} \exp\left[ -\frac{\log^2 \ell}{2\log(1/p)} 
\right] \Theta(1).
    \end{eqnarray*}


It remains to check proper growth conditions on $\Po{\xi}(z)$ which can be directly 
used to justify the depoissonization step. Actually we show that we have similiar
properties as (\ref{eqDep1}) and (\ref{eqDep2}). First we show that 
\begin{equation}\label{eqDep1-new}
\Po{\xi}(z) = O\left(  |z|^{3/2+\log q / \log p}
        \exp\left[ -\frac{\log^2 |z|}{2\log(1/p)} \right] \right) 
\end{equation}
uniformly for $\Re(z) \ge \eta$, where $\eta> 0$ is fixed. 
We set $j' = \lfloor \log |z|/\log(1/p) \rfloor$. Since 
$(1- e^{-w})/w = 1 + O(w)$ it directly follows that
\[
\prod_{j=j'+1}^{\infty} \left( \frac{1 - e^{-qp^jz}}{qp^jz} \right) = O(1).
\]
Furthermore, if $\Re(z) \ge \eta$ it follows that 
$|1 - e^{-qp^jz}| \le 1 + e^{-qp^j \eta}$ which implies that 
$\prod_{j=0}^{j'} \left( {1 - e^{-qp^jz}} \right) = O(1)$.
Hence we obtain (\ref{eqDep1-new})).
\[
\Po{\xi}(z) = O\left( |z|\, q^{-j'-1} p^{-j'(j'+1)/2} |z|^{-j'}\right) 
= O\left(  |z|^{3/2+\log q / \log p}
        exp\left[ -\frac{\log^2 |z|}{2\log(1/p)} \right]   \right).
\]
It also follows (by considering Cauchy's formula as in \cite{jacquetszpa1997}) that
all derivatives have similar extimates:
\begin{equation}\label{eqDep1-new-der}
\Po{\xi}^{(k)}(z) = O\left( k! |z|^{3/2+\log q / \log p-k}
        \exp\left[ -\frac{\log^2 |z|}{2\log(1/p)} \right] \right) 
\end{equation}
uniformly for $\Re(z) \ge \eta$.

It remains to prove a condition of the form (\ref{eqDep2}):
    \begin{equation}\label{eqDep2-new}
        |e^{z}\Po{\xi}(z)|
        \leq Ce^{\alpha|z|},
    \end{equation}
    for some positive constants $C$ and $\alpha$ with 
$\alpha < 1$, for $\theta\le |\Arg(z)| \le \pi$
    We will choose $\theta$ such that $\cos(\theta) < 1/2$.
    which we require in order to prove the desired bound for 
$z$ outside the cone but with $\Re(z) > 0$.
This can be proved following the steps of \cite{magnerspa2015}, and we leave
details for the reader.`

Finally by using (\ref{eqDep1-new}) and (\ref{eqDep2-new}) 
together with the method used in 
\cite{jacquetszpa1997} we obtain that 
\begin{eqnarray*}
\ell! \xi_\ell &=& \Po{\xi}(\ell) + 
O\left(  \ell^{1/2+\log q / \log p}
        \exp\left[ -\frac{\log^2 \ell}{2\log(1/p)} \right] \right) \\
        &= &\Po{\xi}(\ell) \left( 1 + O(\ell^{-1}) \right).
\end{eqnarray*}        
This completes the depoissonization proof for $\Po{\xi}(z)$.


Summing up we obtain
\begin{eqnarray*}
\xi_\ell(n) &= &\frac{\Po{\xi}(\ell)}{\ell!}  \left( 1 + O(\ell^{-1}) \right)+ 
O\left( \frac{p^{n-\ell} + (q/p)^{n-\ell}}{(\ell-1)!} \right) \\
&=& \frac{\Po{\xi}(\ell)}{\ell!} \Theta(1)
\end{eqnarray*}
if $\log^2 \ell = o(n-\ell)$. 
Hence, plugging this into (\ref{S_ellDefinition}) and setting $k=n-\ell$, we arrive at  
    \begin{eqnarray*}
        \mu_{n,k} =
        \frac{n!}{(n-k)!} p^{k^2/2 + k/2}q^k (n-k)^{3/2+\frac{\log q}{\log p}} 
            \exp\left( -\frac{\log^2(n-k)}{2\log(1/p)}  \right) \Theta(1),
    \end{eqnarray*}
    if $\log^2(n-k)) = o(k)$.

\subsection{The Symmetric case $p = q = \frac 12$}

If $p = q = \frac 12$ we can process almost in the same way as above. 
Most importantly the recurrence for $\xi_\ell(n)$ simplifies to 
\[
\xi_\ell(n)(1 - 2^{1-n}) = \sum_{J=1}^\ell \frac{\xi_{\ell+1-J}}{J!} 2^{2-\ell}.
\]
Similarly to the above it now follows that 
$\xi_\ell(n) = \xi_\ell + O(2^{-(n-\ell)}/(\ell-1)!)$,
where $\xi_\ell = \lim_{n\to\infty} \xi_\ell(n)$. 
Furthermore the Poisson generating function
$\Po{\xi}(z)$ of $\ell! \xi_\ell$ is now given by
\[
\Po{\xi}(z) = z^2 \prod_{j=1}^\infty \frac{1-e^{-z/2^j}}{z/2^j}.
\]
By using the same techniques aa above it, thus, follows that 
\[
\xi_\ell = \frac{\ell^{5/2}}{\ell !}  
\exp\left( -\frac{\log^2\ell}{2\log 2}  \right) \Theta(1).
\]
and consequently
    \begin{eqnarray*}
        \mu_{n,k} =
         \frac{n!}{(n-k)!} 2^{-k^2/2 - 3k/2} (n-k)^{5/2} 
            \exp\left( -\frac{\log^2(n-k)}{2\log 2}  \right) \Theta(1),
    \end{eqnarray*}
    if $\log^2(n-k)) = o(k)$. This is consistent with the case $p > q$.

\subsection{Uniform bounds for $p\to 1$}.

We finally provide upper bounds for $\xi_\ell$ that are uniform as $p\to 1$.
(Actually we show more or less that (\ref{sum1}) gives also an upper bound.)

For convenience we work with $\xi_{\ell+1}$ instead of $\xi_\ell$ and recall that
the generating function is given by
\[
X(z) = \sum_{\ell \ge 0} \xi_\ell z^\ell = \prod_{j\ge 0} \frac{e^{qp^jz} -1}{qp^j z}.
\]
First suppose that $|z| \le 1/(1-p) = 1/q$ Then we have $|qp^j z|\le 1$ for all $j\ge 0$
and by using the approximation $e^x = 1 + x + x^2/2 + O(x^3)$ we directly obtain
the uniform representation
\begin{equation}\label{eqxiappr0}
X(z) = \sum_{\ell \ge 0} \xi_\ell z^\ell = e^{z/2 + O(qz^2)}. \qquad  (|qz| \le 1)
\end{equation}
This representation can be also used to obtain asymptotics for $\xi_{\ell+1} = \xi_{\ell+1}(p)$
if $\ell \le 1/(1-p) = 1/q$. We just apply a usual saddle point asymptotics on the circle
$|z| = \ell$ on the Cauchy integral
\[
\xi_{\ell +1} = \frac 1{2\pi i} \int_{|z|= \ell} X(z) z^{-ell-1}\, dz
\]
to obtain
\begin{equation}\label{eqxiappr}
\xi_{\ell+1}(p) = \frac 1{2^{\ell(1+O(q\ell)))} \ell!} \qquad (\ell \le 1/q).
\end{equation}
In particular if $\ell$ is fixed then (\ref{eqxiappr}) implies that $\xi_{\ell+1}(1) = 1/(2^\ell \ell!)$.

If $|zq|\ge 1$ then the situation is different. Here we restrict to the case of positive real $z$,
since this is sufficient to obtain upper bounds. We define $j_0$ by
\[
j_0 = \left\lceil \frac{\log(q z)}{\log(1/p)} \right\rceil
\]
and distinguish between the case $j\ge j_0$ and $j< j_0$. In the first case we $qp^j z \le 1$ so that
\[
\prod_{j\ge j_0} \frac{e^{qp^jz} -1}{qp^j z} = e^{p^{j_0} z/2 + O(p^{2j_0} q z^2) } = e^{O(1/q)}.
\]
We note that the implicit constant can be chosen to be arbitrarily close to $\frac 12 + \frac 16 = \frac 23$.
For the remaining product we have
\[
\prod_{j < j_0} \frac{e^{qp^jz} -1}{qp^j z} \le \prod_{j < j_0} \frac{e^{qp^jz}}{qp^j z} \le \frac{ e^{z(1-p^{j_0})}}{(qz)^{j_0} p^{j_0(j_0-1)/2}  } \le 
e^{z - \frac 1q - \frac{\log^2(qz)}{2\log(1/p)}}
\]
Thus we obtain 
\begin{equation}\label{eqxiappr2}
X(z) \le e^{z - \frac{\log^2(qz)}{2\log(1/p)}}. \qquad  (qz > 1).
\end{equation}
From this estimate we obtain, by the way, a uniform estimate for 
\begin{equation}\label{eqxiappr3}
\xi_{\ell+1} \le \frac {X(\ell)}{\ell^\ell} \le \frac{C}{\ell!}
\end{equation}
uniformly for $\frac 12 \le p \le 1$.


\section{Precise Analysis of the Lower Bound}

We present here a detailed and precise analysis for the lower bound of $\tilde G_k(n)$, 
where $k = \log_{1/p} \log n + \Psi_L(n)$ with $\Psi_L(n) = \frac 12 (1- \epsilon) \log_{p/q} \log n$.

We recall that $j_0 = \lfloor j^* +\frac 12\rfloor$ is defined as the closest integer to $j^*$, where $j^*$ is the solution of the
equation
\[
(q/p)^{j^*} (k - j^*) = \frac{\log (1/p)}{\log(p/q)} (j^* - \Psi_L(n)).
\]
Furthermore we set
\[
\overline r_0 = (q/p)^{j_0} (k - j_0) \quad \mbox{and} \quad \overline r_1 =  \frac{\log (1/p)}{\log(p/q)} (j_0 - \Psi_L(n)).
\]
and
\[
r_1(j) =  \frac{\log (1/p)}{\log(p/q)} (j - \Psi_L(n)) = \overline r_1 + \frac{\log (1/p)}{\log(p/q)}(j-j_0).
\]
Note that $\sqrt{q/p} \le \overline r_0 / \overline r_1 \le \sqrt{p/q}$ and that 
\[
n p^{k-j} (q/p)^{r_1(j)} = p^{\Psi_L(n) - j} (q/p)^{r_1(j)} = 1.
\]
We will also make use of the following abbreviation
\begin{equation}\label{eqF0}
F_0 :=  p^{j_0(j_0+1)2} q^{j_0-1} n^{j_0} p^{j_0(k-j_0)} \frac{ \overline r_0^{\overline r_1}} {\Gamma(\overline r_1 +1)}.
\end{equation}

Next we recall that we represent $\tilde G_k(n)$ (see (\ref{G_kExplicitFormHLowerBound-2})) by 
\[
 \Po{G}_k(n) = 
     \sum_{j=0}^k \sum_{m \ge j} \overline\kappa_{m,j} \overline \nu_{m,j}  
    + O\left( n^{j_0} T(-j_0)^{k-j_0}p^{j_0(j_0+1)/2}q^{j_0} \left( p^{j_0} + (q/p)^{j_0} \right) \right),
\]
where
\[
\overline \nu_{m,j} := - C_*(p) m! p^{j(j-1)/2} q^{j-1} \xi_{m-j+1}  .
\]
and 
\begin{eqnarray*}
\overline \kappa_{m,j} =  \frac{p^m n^m}{m!} \sum_{r=0}^{k-j} {k-j \choose r}  
p^{m(k-j-r)} q^{mr}  
\left(  e^{-np^{k-j-r}q^r} - \sum_{\ell\le j_0-m} \frac{(-n)^\ell}{\ell!} (p^{k-j-r}q^{r})^\ell   \right)
\end{eqnarray*}
for $j\le j_0$ and $j\le m\le j_0$ and otherwise
\begin{eqnarray*}
\overline \kappa_{m,j} =  \frac{p^m n^m}{m!} \sum_{r=0}^{k-j_0} {k-j \choose r} 
p^{m(k-j-r)} q^{mr)}   e^{-np^{k-j-r}q^r}.
\end{eqnarray*}

We also split the above into several parts:
\[
T_1 :=  \sum_{j> j_0} \sum_{m\ge j}  \overline\kappa_{m,j} \overline \nu_{m,j}, \quad
T_2 :=  \sum_{j\le  j_0} \sum_{m > j_0} \overline \kappa_{m,j} \overline \nu_{m,j}, \quad
T_3 :=  \sum_{j\le  j_0} \sum_{m=j}^{j_0} \overline \kappa_{m,j} \overline \nu_{m,j}. 
\]
Moreover we note that the exponential function $e^{-np^{k-j-r}q^r}
= e^{-(q/p)^{r-r_1(j)}}$ behaves completely differently 
for $r\le r_1(j)$ and for $r> r_1(j)$
where $r_1(j) = (j- \psi_(n))\frac{\log(1/p)}{\log(p/q)}$.
Hence it is convenient to split $T_3$ into three parts $T_{30} + T_{31}+T_{32}$, where the $T_{30}$ and $T_{31}$ correspond
to the terms with $r\le r_1(j)$ and $T_{32}$ for those with $r> r_1(j)$.
$T_{30}$ involves the exponential function $e^{-np^{k-j-r}q^r}$ whereas
$T_{31}$ takes care of the polynomial sum
$\sum_{\ell\le j_0-m} \frac{(-n)^\ell}{\ell!} (p^{k-j-r}q^{r})^\ell$.

What remains is a more detailed analysis of the sums $T_1,T_2,T_{30}, T_{31}, T_{32}$.

\subsection{Representation of the terms $T_1,T_2,T_{30}, T_{31}, T_{32}$}

\subsubsection{The term $T_1$.}

We recall that 
\begin{eqnarray*}
T_1 &=& \sum_{j> j_0} \sum_{m\ge j} \overline\kappa_{m,j} \overline \nu_{m,j} \\
&=& - C_*(p) \sum_{j> j_0} \sum_{m\ge j} p^{j(j-1)/2} q^{j-1} \xi_{m-j+1} p^m n^m
\sum_{r=0}^{k-j} {k-j \choose r} p^{m(k-j-r)} q^{mr} 
e^{-n p^{k-j-r} q^r}
\end{eqnarray*}
We use now the substitutions $j=j_0 + J$ and $m = j+ L = j_0 + J + L$, 
where $J > 0$ and $L\ge 0$. Furthermore by using approximation 
${k-j \choose r} \sim (k-j)^r/r! \sim (k-j_0)^r/r!$ we obtain
\begin{eqnarray*}
T_1 &\sim & - C_*(p)  p^{j_0(j_0+1)2} q^{j_0-1} n^{j_0} p^{j_0(k-j_0)} 
\sum_{J > 0} \sum_{L\ge 0} p^{J (J+1)/2} q^{J} \xi_{L+1} p^L \\
&&\qquad \times \sum_{r} \frac{ {\overline r_0}^r } {r!}  (q/p)^{(L+J)(r-r_1(j))} e^{-(q/p)^{r-r_1(j)}} \\
&\sim&  - C_*(p)F_0  \cdot\sum_{J > 0} p^{J (J+1)/2} q^{J}  
\left( \frac {\overline r_0}{\overline r_1} \right)^{J \frac{\log(1/p)}{\log(p/q)}}
  \sum_{L\ge 0} 	\xi_{L+1} p^L \\
&&\qquad \times   
\sum_r (q/p)^{(L+J)(r-r_1(j))} \left( \frac {\overline r_0}{\overline r_1} \right)^{r-r_j(j)}   e^{-(q/p)^{r-r_1(j)}},
\end{eqnarray*}
where $F_0$ is given in (\ref{eqF0}). 
Thus, if we define (with the implicit notation $q = 1-p$)
\begin{eqnarray}
C_1(p,u,v) &=& \sum_{J > 0} p^{J (J+1)/2} q^{J}  
u^{J \frac{\log(1/p)}{\log(p/q)}}
  \sum_{L\ge 0} 	\xi_{L+1} p^L \label{eqC1puv}   \\
&&\qquad \times   
\sum_{R\in \mathbb{Z}} \left( (q/p)^{(L+J)} u \right)^{R - v -J\frac{\log(1/p)}{\log(p/q)}}   e^{-(q/p)^{R - v -J\frac{\log(1/p)}{\log(p/q)}}}  \nonumber
\end{eqnarray}
we obtain
\[
T_1 \sim - C_*(p)\,F_0\,  C_1\left( p, \frac{\overline r_0}{\overline r_1}, \langle \overline r_1 \rangle \right),
\]  
where $\langle x \rangle = x - \lfloor x \rfloor$ denotes the fractional part of a real number $x$.
Note that we have substituted $r-r_1(j)$ by
\begin{eqnarray*}
r - r_1(j) &=& (r-\lfloor \overline r_1 \rfloor) - \langle \overline r_1 \rangle  + (\overline r_1 - r_1(j)) \\
&=& R - v - J\frac{\log(1/p)}{\log(p/q)}
\end{eqnarray*}

\subsubsection{The term $T_2$.}

The term $T_2$ can be handled almost in the same way as $T_1$.
By using the representation
\begin{eqnarray*}
T_2 &=& \sum_{j\le j_0} \sum_{m > j_0} \overline\kappa_{m,j} \overline \nu_{m,j} \\
&=& - C_*(p) \sum_{j\le j_0} \sum_{m > j_0} p^{j(j-1)/2} q^{j-1} \xi_{m-j+1} p^m n^m
\sum_{r=0}^{k-j} {k-j \choose r} p^{m(k-j-r)} q^{mr} 
e^{-n p^{k-j-r} q^r}
\end{eqnarray*}
and the same substitutions as above, $j = j_0+J$, $m = j+ L = j_0 + J + L$, 
where $J \le 0$ and $L\ge -J$, we obtain
\[
T_2 \sim - C_*(p)\,F_0\,  C_2\left( p, \frac{\overline r_0}{\overline r_1}, \langle \overline r_1 \rangle \right),
\]  
where
\begin{eqnarray}
C_2(p,u,v) &=& \sum_{J \le 0} p^{J (J+1)/2} q^{J}  
u^{J \frac{\log(1/p)}{\log(p/q)}}
  \sum_{L> -J} 	\xi_{L+1} p^L \label{eqC2puv}   \\
&&\qquad \times   
\sum_{R\in \mathbb{Z}} \left( (q/p)^{(L+J)} u \right)^{R - v -J\frac{\log(1/p)}{\log(p/q)}}   e^{-(q/p)^{R - v -J\frac{\log(1/p)}{\log(p/q)}}}.  \nonumber
\end{eqnarray}

\subsubsection{The term $T_{30}$.}

Next we consider $T_{30}$ that is given by
\begin{eqnarray*}
T_{30}
&=& - C_*(p) \sum_{j\le j_0} \sum_{m= j}^{j_0} p^{j(j-1)/2} q^{j-1} \xi_{m-j+1} p^m n^m
\sum_{r\le r_j(j)}^{k-j} {k-j \choose r} p^{m(k-j-r)} q^{mr} 
e^{-n p^{k-j-r} q^r}.
\end{eqnarray*}
Here we obtain (again with the same substitutions as above)
\[
T_{30} \sim - C_*(p)\,F_0\,  C_{30}\left( p, \frac{\overline r_0}{\overline r_1}, \langle \overline r_1 \rangle \right),
\]  
where
\begin{eqnarray}
C_{30}(p,u,v) &=& \sum_{J \le 0} p^{J (J+1)/2} q^{J}  
u^{J \frac{\log(1/p)}{\log(p/q)}}
  \sum_{L=0}^{-J} 	\xi_{L+1} p^L \label{eqC30puv}   \\
&&\qquad \times   
\sum_{R\in \mathbb{Z}, R - v -J\frac{\log(1/p)}{\log(p/q)}\le 0 } 
\left( (q/p)^{(L+J)} u \right)^{R - v -J\frac{\log(1/p)}{\log(p/q)}}   e^{-(q/p)^{R - v -J\frac{\log(1/p)}{\log(p/q)}}}.  \nonumber
\end{eqnarray}

\subsubsection{The term $T_{32}$.}

The term $T_{32}$ that is given by
\begin{eqnarray*}
T_{32}
&=& - C_*(p) \sum_{j\le j_0} \sum_{m= j}^{j_0} p^{j(j-1)/2} q^{j-1} \xi_{m-j+1} p^m n^m
\sum_{r > r_j(j)}^{k-j} {k-j \choose r} p^{m(k-j-r)} q^{mr} \\
&& \qquad \qquad \times \left( e^{-n p^{k-j-r} q^r} - \sum_{\ell\le j_0-m} \frac{(-n p^{k-j-r} q^r)^\ell}{\ell !}   \right)
\end{eqnarray*}
Here we get
\[
T_{32} \sim - C_*(p)\,F_0\,  C_{32}\left( p, \frac{\overline r_0}{\overline r_1}, \langle \overline r_1 \rangle \right),
\]  
where
\begin{eqnarray}
C_{32}(p,u,v) &=& \sum_{J \le 0} p^{J (J+1)/2} q^{J}  
u^{J \frac{\log(1/p)}{\log(p/q)}}
  \sum_{L=0}^{-J} 	\xi_{L+1} p^L \label{eqC32puv}   \\
&&\qquad \times   
\sum_{R\in \mathbb{Z}, R - v -J\frac{\log(1/p)}{\log(p/q)} > 0 } 
\left( (q/p)^{(L+J)} u \right)^{R - v -J\frac{\log(1/p)}{\log(p/q)}}  \\
&& \qquad \qquad \times \left(  e^{-(q/p)^{R - v -J\frac{\log(1/p)}{\log(p/q)}}} 
- \sum_{\ell=0}^{-J-L} \frac{(-1)^\ell}{\ell!} (q/p)^{(R - v -J\frac{\log(1/p)}{\log(p/q)})\ell} 
  \right)  \nonumber
\end{eqnarray}

\subsubsection{The term $T_{31}$.}

The term $T_{31}$ is the most interesting one.
It can be written as
\[
T_{31} = \sum_{K\ge 0} S_K.
\]
where we have to distinguish between the term $S_0$ and the terms $S_K$ for $K\ge 1$,

The term $S_0$ is given by 
\begin{eqnarray*}
S_0&=&  - C_*(p)p^{j_0(j_0+1)2} q^{j_0-1} n^{j_0} p^{j_0(k-j_0)} \sum_{L,M \ge 0} \xi_{L+1}\frac{(-1)^{M}}{M!} p^{((L+M)^2 + L-M)/2} q^{-L-M} \\
&& \qquad \times \sum_{r_1(j_0-M-L)< r \le \overline r_1} {k-j_0+L+M \choose r } \left( \frac qp \right)^{j_0r}.
\end{eqnarray*}
Since
\[
\sum_{r_1(j_0-M-L)< r \le \overline r_1} {k-j_0+L+M \choose r } \left( \frac qp \right)^{j_0r} 
\sim \frac{ \overline r_0^{\overline r_1}} {\Gamma(\overline r_1 +1)} 
\sum_{-(M+L) \frac{\log(1/p)}{\log(p/q)} \le r-\overline r_1 \le 0 } \left( \frac{\overline r_0}{\overline r_1} \right)^{r-\overline r_1}
\]
we, thus, obtain the representation
\[
S_0 \sim - C_*(p)\,F_0\,  C_{31,0}\left( p, \frac{\overline r_0}{\overline r_1}, \langle \overline r_1 \rangle \right),
\]  
where
\begin{eqnarray}
C_{31,0}(p,u,v) &=& 
\sum_{L,M \ge 0} \xi_{L+1}\frac{(-1)^{M}}{M!} p^{((L+M)^2 + L-M)/2} q^{-L-M} \label{eqC310puv}     \\
&&\qquad \times 
\sum_{-(M+L) \frac{\log(1/p)}{\log(p/q)} +v \le R \le 0 } u^{R-v}.
\nonumber
\end{eqnarray}
It is also convenient to rewrite this also as a sum over $J = -M-L\le 0$ and $0\le L \le -J$:
\begin{eqnarray}
C_{31,0}(p,u,v) &=& 
\sum_{J\le 0}\sum_{L=0}^{-J} \xi_{L+1}\frac{(-1)^{-J-L}}{(-J-L)!} p^{ J(J+1)/2 + L} q^{J} \label{eqC310puv-2}     \\
&&\qquad \times 
\sum_{ J \frac{\log(1/p)}{\log(p/q)} +v \le R \le 0 } u^{R-v}.
\nonumber
\end{eqnarray}

For $K\ge 1$ the terms $S_K$ are given by 
\begin{eqnarray*}
S_K&=& C_*(p)p^{j_0(j_0+1)2} q^{j_0-1} n^{j_0} p^{j_0(k-j_0)}   \left( \frac qp \right)^{K\overline r_1} \\
&&\qquad \times  \sum_{L\ge 0, \, M\ge K} \xi_{L+1}\frac{(-1)^{M-K}}{(M-K)!} p^{((L+M)^2 + L-M)/2-K(L+M)} q^{-L-M} \\
&&\qquad \qquad \times \sum_{r\le r_1(j_0-M-L)} {k-j_0+M+L \choose r } \left( \frac qp \right)^{(j_0-K)r}.
\end{eqnarray*}
The sum over $r$ together with the factor $\left( q/p \right)^{K\overline r_1}$ can be approximated by
\begin{eqnarray*}
&& \left( \frac qp \right)^{K\overline r_1} \sum_{r\le r_1(j_0-M-L)} {k-j_0+M+L \choose r } \left( \frac qp \right)^{(j_0-K)r} \\
&&\sim \frac{ \overline r_0^{\overline r_1}} {\Gamma(\overline r_1 +1)} 
\sum_{r\le \overline r_1 - (M+L)  \frac{\log(1/p)}{\log(p/q)} } 
\left( \frac{\overline r_0}{\overline r_1} \right)^{r-\overline r_1} \left( \frac qp \right)^{-K(r-\overline r_1)}.
\end{eqnarray*}
Thus, we have for $K\ge 1$
\[
S_K \sim C_*(p)\,F_0\,  C_{31,K}\left( p, \frac{\overline r_0}{\overline r_1}, \langle \overline r_1 \rangle \right),
\]  
where
\begin{eqnarray}
C_{31,K}(p,u,v) &=& 
\sum_{L\ge 0, \, M\ge K} \xi_{L+1}\frac{(-1)^{M-K}}{(M-K)!} p^{((L+M)^2 + L-M)/2-K(L+M)} q^{-L-M}  \nonumber     \\
&&\qquad \times 
\sum_{R\le v - (M+L)  \frac{\log(1/p)}{\log(p/q)} } 
 \left( u \left(\frac qp\right)^{-K} \right)^{R-v}.
\label{eqC31Kpuv}
\end{eqnarray}
As above we can rewrite this as a sum over $J\le -K$ and $0\le L \le -J-K$:
\begin{eqnarray}
C_{31,K}(p,u,v) &=& 
\sum_{J\le -K} \sum_{L=0}^{-J-K} \xi_{L+1}\frac{(-1)^{-J-L-K}}{(-J-L-K)!} p^{J(J+1)/2 + L +JK} q^{J} \nonumber     \\
&&\qquad \times 
\sum_{R\le v+J \frac{\log(1/p)}{\log(p/q)} } 
 \left( u \left(\frac qp\right)^{-K} \right)^{R-v}.
\label{eqC31Kpuv-2}
\end{eqnarray}

\subsection{Asymptotic analysis for $p\to \frac 12$.}

It $p\to \frac 12 $ then $q/p \to 1$ and we can write
\[
\frac qp = e^{-\eta},
\]
where $\eta \to 0$. Note that $p> \frac 12$ implies that $\eta > 0$.

Furthermore, we use the abbrevation $u = \overline r_0/\overline r_1$ and $v = \{ \overline r_1\}$.
Note that $ e^{\eta/2}= \sqrt{q/p}\le u \le \sqrt{p/q} = e^{\eta/2}$ or equivalently $-\frac 12 \le \frac 1\eta \log u \le \frac 12$.
We will therefore also use the abbreviation $\tilde u = \frac 1\eta \log u$

Finally we mention that $\xi_\ell = \xi_\ell(p)$ depend smoothly on $p \in [\frac 12, 1]$.
Furthermore we have a uniform upper bound $|\xi_\ell(p)|\le C/(\ell-1)!$.

\subsubsection{The term $T_1$.}

Since the factors $C_*(p)$ and $F_0$ are present in all terms it suffices
to study the sum $C_1(p,u,v)$ (defined in (\ref{eqC1puv})) in order to study the 
asymptotic behavior of $T_1$. 

We start with the sum over $R$. The first observation is that for $\eta \to 0$ we can replace the
sum by an integral, that is, we have for fixed integers $L,J$, as $\eta\to 0$,
\begin{eqnarray*}
&& \sum_{R\in \mathbb{Z}} \left( (q/p)^{(L+J)} u \right)^{R - v -J\frac{\log(1/p)}{\log(p/q)}}   e^{-(q/p)^{R - v -J\frac{\log(1/p)}{\log(p/q)}}} \\
&&\sim \int_{-\infty}^\infty 
\left( (q/p)^{(L+J)} u \right)^{t}   e^{-(q/p)^{t}} \, dt \\
&& = \frac 1\eta \int_{-\infty}^\infty e^{-\left(M - \tilde u \right) t} e^{-e^{-t}}\, dt.
\end{eqnarray*}
This also implies that the leading asymptotic term does not depend on $v = \{ \overline r_1\}$.
Further note that $\tilde M = M - \frac 1\eta \log u = L+J - \tilde u \ge \frac 12$ so that the integral converges
and by using the substitution $w = e^{-t}$ we obtain
\[
\int_{-\infty}^\infty e^{-\tilde M t} e^{-e^{-t}}\, dt 
= \int_0^\infty w^{\tilde M -1} e^{-w}\, dw = \Gamma(\tilde M).
\]
This finally shows that, as $p\to \frac 12$ (or equivalently as $\eta = \log(p/q) \to 0$),
\begin{equation}\label{eqC1asymp}
C_1(p,u,v) \sim \frac 1\eta 
\sum_{J > 0} 2^{-J (J+1)/2 - J + J\tilde u}   
  \sum_{L\ge 0} 	\xi_{L+1}(1/2)\, 2^{-L}\, \Gamma\left( J + L - \tilde u\right).
\end{equation}

\subsubsection{The term $T_2$.}

Here we analyze the term  $C_2(p,u,v)$ (defined in (\ref{eqC2puv})) which looks very similar
to $C_1(p,u,v)$, and in fact we can do almost the same considerations. First of all we again
have that $J+L - \frac 1\eta \log u > 0$. Thus, the appearing sums and integrals are convergent.
This finally shows that, as $p\to \frac 12$ (or equivalently as $\eta = \log(p/q) \to 0$),
\begin{equation}\label{eqC2asymp}
C_2(p,u,v) \sim \frac 1\eta 
\sum_{J \le 0} 2^{-J (J+1)/2 - J + J\tilde u}   
  \sum_{L > -J} 	\xi_{L+1}(1/2)\, 2^{-L}\, \Gamma\left( J + L - \tilde u\right).
\end{equation}

\subsubsection{The term $T_{30}$.}

The representation of $C_{30}(p,u,v)$ (given in (\ref{eqC30puv})) is quite similar to
$C_1(p,u,v)$ or $C_2(p,u,v)$. The main difference is that $L+J \le 0$ and that the sum over $R$ can now be approximated by
the incomplete Gamma function:
\begin{eqnarray*}
&& \sum_{R\in \mathbb{Z}, R - v -J\frac{\log(1/p)}{\log(p/q)}\le 0 } 
\left( (q/p)^{(L+J)} u \right)^{R - v -J\frac{\log(1/p)}{\log(p/q)}}   e^{-(q/p)^{R - v -J\frac{\log(1/p)}{\log(p/q)}}}  \\
&& \qquad \qquad \sim \int_{-\infty}^0 
\left( (q/p)^{(L+J)} u \right)^{t}   e^{-(q/p)^{t}} \, dt \\
&&  \qquad \qquad = \frac 1\eta \int_{-\infty}^0 e^{-\left(L+J - \tilde u\right) t} e^{-e^{-t}}\, dt \\
&& \qquad \qquad  = \frac 1\eta \int_1^\infty w^{L+J - \tilde u} e^{-w}\, dw \\
&&  \qquad \qquad  = \frac 1\eta \Gamma( L+J - \tilde u, 1),
\end{eqnarray*}
where 
\[
\Gamma(s,z) := \int_z^\infty w^{s-1} e^{-w}\, dw.
\]
This leads to
\begin{equation}\label{eqC30asymp}
C_{30}(p,u,v) \sim \frac 1\eta 
\sum_{J \le 0} 2^{-J (J+1)/2 - J + J\tilde u}   
  \sum_{L =0}^{-J} 	\xi_{L+1}(1/2)\, 2^{-L}\, \Gamma\left( J + L - \tilde u, 1\right).
\end{equation}

\subsubsection{The term $T_{32}$.}

We start again with the analysis of the $R$-sum. We note that $L+J\le 0$ so that we obtain
\begin{eqnarray*}
&&\sum_{R\in \mathbb{Z}, R - v -J\frac{\log(1/p)}{\log(p/q)} > 0 } 
\left( (q/p)^{(L+J)} u \right)^{R - v -J\frac{\log(1/p)}{\log(p/q)}}  \\
&& \qquad \qquad \times \left(  e^{-(q/p)^{R - v -J\frac{\log(1/p)}{\log(p/q)}}} 
- \sum_{\ell=0}^{-J-L} \frac{(-1)^\ell}{\ell!} (q/p)^{(R - v -J\frac{\log(1/p)}{\log(p/q)})\ell} 
  \right)  \\
&& =   \sum_{R\in \mathbb{Z}, R - v -J\frac{\log(1/p)}{\log(p/q)} > 0 } 
\left( (q/p)^{(L+J-\tilde u)} \right)^{R - v -J\frac{\log(1/p)}{\log(p/q)}}  \\
&& \qquad \qquad \times  \sum_{\ell\ge -J-L+1} \frac{(-1)^\ell}{\ell!} (q/p)^{(R - v -J\frac{\log(1/p)}{\log(p/q)})\ell} \\
&&\sim \sum_{\ell\ge -J-L+1} \frac{(-1)^\ell}{\ell!} 
\int_0^\infty e^{-\eta (\ell + J+L - \tilde u) t}\, dt \\
&& =  \sum_{\ell\ge -J-L+1} \frac{(-1)^\ell}{\ell!}  \frac 1{\ell + J+L - \tilde u}
\end{eqnarray*}
This leads to 
\begin{equation}\label{eqC32asymp}
C_{32}(p,u,v) \sim \frac 1\eta 
\sum_{J \le 0} 2^{-J (J+1)/2 - J + J\tilde u}   
  \sum_{L =0}^{-J} 	\xi_{L+1}(1/2)\, 2^{-L} \sum_{\ell\ge -J-L+1} \frac{(-1)^\ell}{\ell!}  \frac 1{\ell + J+L - \tilde u}.
\end{equation}

\subsubsection{The term $T_{31}$.}

We consider first the behavior of $C_{31,0}(p,u,v)$ (that is given in (\ref{eqC310puv})).
Since
\[
\sum_{-(M+L) \frac{\log(1/p)}{\log(p/q)} +v \le R \le 0 } u^{R-v}
\sim \frac{1 - 2^{-(M+L)\tilde u}}{\eta\, \tilde u}
\]
we directly obtain
\begin{equation}\label{eqC310asymp}
C_{31,0}(p,u,v) \sim \frac 1\eta 
\sum_{L,M \ge 0} \xi_{L+1}(1/2) \frac{(-1)^{M}}{M!} 2^{-(L+M)^2/2 +L/2 + 3M/2}
\frac{1 - 2^{-(M+L)\tilde u}}{\tilde u}.
\end{equation}
As above it also convenient to rewrite the resulting sum into a sum in 
$J = -L-M \le 0$ and $0\le L \le -J$:
\begin{equation}\label{eqC310asymp-2}
C_{31,0}(p,u,v) \sim \frac 1\eta 
\sum_{J\le 0,\, 0\le L \le -J} \xi_{L+1}(1/2) \frac{(-1)^{-J-L}}{(-J-L)!} 2^{-J(J+1)/2 -J-L}
\frac{1 - 2^{J\tilde u}}{\tilde u}.
\end{equation}

Next suppose that $K\ge 1$. The terms $C_{31,K}(p,u,v)$ are given in (\ref{eqC310puv}).
Since 
\[
\sum_{R\le - (M+L)  \frac{\log(1/p)}{\log(p/q)} } 
 \left( u \left(\frac qp\right)^{-K} \right)^{R-v}
\sim \frac{p^{K(M+L)} p^{(M+L)\tilde u}}{\eta(K+\tilde u)}
\]
it follows that 
\begin{eqnarray}
&& C_{31,K}(p,u,v)  \label{eqC31Kasymp} \\
&& \sim \frac 1{\eta(K+\tilde u)}
\sum_{L\ge 0, \, M\ge K} \xi_{L+1}(1/2) \frac{(-1)^{M-K}}{(M-K)!} 2^{-(L+M)^2/2 + L/2+3M/2- (M+L)\tilde u }  \nonumber
\end{eqnarray}
and consequently
\begin{eqnarray}
&& \sum_{K\ge 1} C_{31,K}(p,u,v)  \label{eqC31asymp} \\
&& \sim \frac 1{\eta} \sum_{L\ge 0, \, M\ge 1} \xi_{L+1}(1/2) \, 2^{-(L+M)^2/2 + L/2 + 3M/2- (M+L)\tilde u } 
\sum_{K=1}^M \frac{(-1)^{M-K}}{(M-K)!(K+\tilde u)} \nonumber
\end{eqnarray}
Of course we can rewrite the resulting sum as a sum over $J,L$:
\begin{eqnarray}
&& \sum_{K\ge 1} C_{31,K}(p,u,v)  \label{eqC31asymp-2} \\
&& \sim \frac 1{\eta}\sum_{J\le -1, 0\le L\le -J-1} \xi_{L+1}(1/2)\, 2^{-J(J+1)/2 -J -L  +J\tilde u } 
\sum_{K=1}^{-J-L} \frac{(-1)^{-J-L-K}}{(-J-L-K)!(K+\tilde u)}  \nonumber
\end{eqnarray}

\subsubsection{The full behavior for $p\to \frac 12$.}

Summing up it follows that
\begin{eqnarray*}
\tilde G_k(n) &=& T_1 + T_2 + T_{30} + T_{32} + \sum_{K\ge 0} S_K  \\ 
 &\sim &  C_*(p)\,F_0\, ( - C_1 - C_2 - C_{30} - C_{32} - C_{31,0} + \sum_{K\ge 1} C_{31,K} ),
\end{eqnarray*}
where, as $p\to \frac 12$ or equivalently as $\eta \to 0$, all the terms
$C_1(p,u,v)$, $C_2(p,u,v)$, $C_{30}(p,u,v)$, $C_{32}(p,u,v)$, $C_{31,K}(p,u,v)$ ($K\ge 0$) behave
as $1/\eta$ with some factor that only depends on $\tilde u$. This means that we have 
\[
- C_1 - C_2 - C_{30} - C_{32} - C_{31,0} + \sum_{K\ge 1} C_{31,K} \sim \frac 1\eta h_1(\tilde u) 
\]
for some explicit function $h_1(\tilde u)$ that collects from the asymptotic relations
(\ref{eqC1asymp}), (\ref{eqC2asymp}), (\ref{eqC30asymp}), (\ref{eqC32asymp}), (\ref{eqC310asymp}), (\ref{eqC31asymp}).

It remains to check that $h_1(\tilde u)$ stays positive for all $\tilde u \in [-\frac 12, \frac 12]$.
It is clear that the dependence on $\tilde u$ is smooth in all appearing terms and that the negative derivative 
with respect to $\tilde u$ can be uniformly upper bounded, as seen in the 
following
table of derivatives.

\begin{center}
\begin{tabular}{|l|l|l|l|} \hline
$\tilde{u}$ & $h_1(\tilde{u})$ & $h_1(\tilde{u}) / h'_1(\tilde{u})$ & $h'_1(\tilde{u})$  \\ \hline
$-0.50$ & $1.37683018271327$ & $-0.722028017914153$ & $-1.90689301322511$ \\ 
$-0.40$ & $1.20276152989834$ & $-0.760013160991751$ & $-1.58255355516324$ \\ 
$-0.30$ & $1.05800806833013$ & $-0.802220048867141$ & $-1.31885019555944$ \\ 
$-0.20$ & $0.937149181875061$ & $-0.849393914518373$ & $-1.10331515902895$ \\
$-0.10$ & $0.835870082265573$ & $-0.902466357406207$ & $-0.926206362603876$ \\ 
$0.00$ & $0.358367943474688$ & $0.000915198138561305$ & $391.574161239056$ \\ 
$0.10$ & $0.678937477362699$ & $-1.03136470454952$ & $-0.658290393657834$ \\ 
$0.20$ & $0.618287879529247$ & $-1.11069248821028$ & $-0.556668822461859$ \\ 
$0.30$ & $0.566972485392761$ & $-1.20324708128446$ & $-0.471202045042585$ \\ 
$0.40$ & $0.523532363681955$ & $-1.31263743584976$ & $-0.398840037152404$ \\ 
$0.50$ & $0.486782979369433$ & $-1.44391680699806$ & $-0.337126749276828$ \\ \hline
\end{tabular}
\end{center}

Hence it sufficient to check positivity in a sufficiently fine grid which can be easily performed.
The following table gives some sample values:

\begin{center}
\begin{tabular}{|l|l|l|} \hline
$p$ & $\tilde{u}$ & $h_1(\tilde{u})$ \\
\hline
$0.50$ & $-0.50$ & $1.37683018271327$ \\ 
$0.50$ & $-0.45$ & $1.28574151187623$ \\ 
$0.50$ & $-0.40$ & $1.20276152989834$ \\ 
$0.50$ & $-0.35$ & $1.12708836544424$ \\ 
$0.50$ & $-0.30$ & $1.05800806833013$ \\ 
$0.50$ & $-0.25$ & $0.994884277261959$ \\
$0.50$ & $-0.20$ & $0.937149181875062$ \\
$0.50$ & $-0.15$ & $0.884295608451989$ \\
$0.50$ & $-0.10$ & $0.835870082265572$ \\
$0.50$ & $-0.05$ & $0.791466739676032$ \\
$0.50$ & $0.00$ & $0.580594753668194$ \\ 
$0.50$ & $0.05$ & $0.713309765274110$ \\ 
$0.50$ & $0.10$ & $0.678937477362699$ \\ 
$0.50$ & $0.15$ & $0.647342275661044$ \\ 
$0.50$ & $0.20$ & $0.618287879529247$ \\ 
$0.50$ & $0.25$ & $0.591561730562133$ \\ 
$0.50$ & $0.30$ & $0.566972485392761$ \\ 
$0.50$ & $0.35$ & $0.544347799045552$ \\ 
$0.50$ & $0.40$ & $0.523532363681955$ \\ 
$0.50$ & $0.45$ & $0.504386172111908$ \\ 
$0.50$ & $0.50$ & $0.486782979369433$ \\
\hline
\end{tabular}
\end{center}
\bigskip

\subsection{Asymptotic Analysis for $p\to 1$.}

Before we start with the analysis of the terms $T_1$, $T_2$ etc. we 
recall that
\[
\xi_{L+1}(1) = \frac 1{2^L L!}.
\]
and that 
\[
\sum_{L\ge 0} \xi_{L+1} z^L = e^{z/2 + O(qz^2)}, \qquad |z|\le 1/q
\]
which implies that 
\[
\xi_{L+1}(p) = \frac 1{2^{L(1+O(qL))} L!}.
\]
On the other hand we have a uniform estimate of the form
\[
\xi_{L+1} \le \frac C{L!}
\]

Next we observe that, as $p\to 1$, 
\[
\left( \frac pq \right)^{\frac{\log(1/p)}{\log(p/q)}} = 1 + O(q) \to 1.
\]
Hence, it also follows that
\[
u^{\frac{\log(1/p)}{\log(p/q)}} = 1 + O(q) \to 1.
\]

The strategy for the analysis in the case $p\to 1$ is to 
show that the terms $C_1$, $C_2$, $C_{31,0}$, and $C_{32}$ are
(at most) of order $e^{O(1/q)}$, whereas the $C_{30}$ and the sum $\sum_{K\ge 1} C_{31,0}$ are at
least of order  $e^{c/q  \log^2 q}$ for some positive constant $c> 0$. These terms have to 
be evaluated with more care.

\subsubsection{The term $T_1$.}

We consider the sum $C_1(p,u,v)$ (defined in (\ref{eqC1puv})) and start with the term $J=1$.
Then for every fixed $L\ge 0$ the sum over $R$ is dominated by (at most) two terms corresponding
to $R = 0$ and $R=1$:
\begin{eqnarray*}
&& \sum_{R\in \mathbb{Z}} \left( (q/p)^{(L+1)} u \right)^{R - v -J\frac{\log(1/p)}{\log(p/q)}}   e^{-(q/p)^{R - v -J\frac{\log(1/p)}{\log(p/q)}}}  \\
&& \sim \left( (q/p)^{(L+1)} u \right)^{- v -\frac{\log(1/p)}{\log(p/q)}}   e^{-(q/p)^{- v -\frac{\log(1/p)}{\log(p/q)}}} \\
&& + \left( (q/p)^{(L+1)} u \right)^{1 - v -J\frac{\log(1/p)}{\log(p/q)}}   e^{-(q/p)^{1 - v -\frac{\log(1/p)}{\log(p/q)}}}  \\
&& \sim \left( (q/p)^{(L+1)}u \right)^{- v }   e^{-(q/p)^{- v}} + \left( (q/p)^{(L+1)}u\right)^{1- v }   e^{-(q/p)^{1- v}}.
\end{eqnarray*}
Since (for $\delta \in \{0,1\}$) 
\begin{eqnarray*}
\sum_{L\ge 0} \xi_{L+1} p^L (q/p)^{(\delta-v)L} &= & e^{ \frac 12 p (q/p)^{\delta-v} + O( q (q/p)^{2(\delta -v)} ) }\\
&= & e^{\frac 12 q^{(\delta-v)} + O(q^{1+\delta-v} ) } 
\end{eqnarray*}
we thus obtain that term corresponding $J=1$ in $C_1(p,u,v)$ is asymptotically given by
\[
q (qu)^{-v} e^{-\frac 12 q^{-v}}  + q (qu)^{1-v} e^{-\frac 12 q^{1-v}} 
\]
For $J> 1$ the computations are almost the same but the (initial) factor $q^J$ makes them negligible 
compared to this term. As above we represent $u$ as $u = (p/q)^{\tilde u} \sim q^{-\tilde u}$
Thus, we finally have, as $p\to 1$,
\begin{equation}\label{eqC1asymp2}
C_1(p,u,v) \sim q^{1 -v(1-\tilde u)} e^{-\frac 12 q^{-v}}  + q^{1+ (1- v)(1-\tilde u)} e^{-\frac 12 q^{1-v}} 
\end{equation}
This term is trivially bounded.
 
\subsubsection{The term $T_2$.}  

The main difference between the terms $C_1(p,u,v)$ and  $C_2(p,u,v)$ (defined in (\ref{eqC1puv}) and (\ref{eqC2puv}))
is that $C_2(p,u,v)$ sums over $J\le 0$ and the {\it leading factor} is now $q^J$ that tends to infinity (for $J < 0$ as $p\to 1$).

Next we consider the sum over $R$ and observe that we (uniformly) have 
\begin{eqnarray*}
&&\sum_{R\in \mathbb{Z}} \left( (q/p)^{(L+J)} u \right)^{R - v -J\frac{\log(1/p)}{\log(p/q)}}   e^{-(q/p)^{R - v -J\frac{\log(1/p)}{\log(p/q)}}} \\
&&\qquad = O\left( \frac{(L + J-1)!}{\log(1/q)} \right).
\end{eqnarray*}
Furthermore since $\xi_{L+1} \le C/ L!$ and 
\[
\sum_{L > -J} \frac{(L+J-1)!}{L!} = O \left( \frac 1{(L+1)!} \right)
\]
it is sufficient to consider the sum
\[
\frac 1{\log(1/p)} \sum_{J\le 0} p^{J(J+1)/2}q^J \frac 1{(L+1)!}
\]
It is an easy exercise that the term $p^{J(J+1)/2}q^J /{(L+1)!}$ is maximal for $J \sim A/\log p \sim - A/q$ for some constant $A> 0$
and that it is of order  $e^{O(1/q)}$. Hence it follows that $C_2(p,u,v)$ is also upper bounded by $e^{O(1/q)}$, too.

\subsubsection{The term $T_{30}$.}

We first suppose that $v> 0$ and we also consider (first) the interval
\[
J\in \left[ - v\left( \frac{\log q}{\log p}-1\right) , 0 \right] \cap \mathbb{Z}
\]
for which the term corresponding to $R=0$ dominates the sum (note that $L+J\le 0$);
\begin{eqnarray*}
&&\sum_{R\in \mathbb{Z}, R - v -J\frac{\log(1/p)}{\log(p/q)}\le 0 } 
\left( (q/p)^{(L+J)} u \right)^{R - v -J\frac{\log(1/p)}{\log(p/q)}}   e^{-(q/p)^{R - v -J\frac{\log(1/p)}{\log(p/q)}}}  \\
&& \sim 
\left( (q/p)^{(L+J)} u \right)^{- v -J\frac{\log(1/p)}{\log(p/q)}}   e^{-(q/p)^{- v -J\frac{\log(1/p)}{\log(p/q)}}} 
\end{eqnarray*}
It is also clear that this term gets largest when $J$ equals
\[
J = J_{v,0} := - \left\lfloor v \left( \frac{\log q}{\log p} -1 \right) \right\rfloor.
\]
Interestingly the result will depend heavily on the rounding error
\[
\delta = - v \left( \frac{\log q}{\log p} -1 \right) - J_{v,0}.
\]
In particular the sum over $R$ is then asymptotically given by
\[
\left( (q/p)^{(L+J_{v,0})} u \right)^{- v -J_{v,0}\frac{\log(1/p)}{\log(p/q)}}   e^{-(q/p)^{- v -J_{v,0}\frac{\log(1/p)}{\log(p/q)}}} 
\sim e^{-1} (q/p)^{-\delta(J_{v,0}+L)}.
\]
Thus, we have to study next the following sum	
\begin{eqnarray*}
e^{-1} \sum_{L=0}^{-J_{v,0}} \xi_{L+1} p^L (q/p)^{-\delta(J_{v,0}+L)} &= & e^{-1} 
[z^{-J_{v,0}}] \frac{ e^{pz/2 + O(qz^2)} }{1 - (q/p)^\delta z} \\
&\sim & e^{-1} (q/p)^{-\delta J_{v,0}} e^{p(p/q)^\delta + O(q (p/q)^{2\delta})}.
\end{eqnarray*}
Finally, we observe that the (total) sum over  $J \in \left[ - v\left( \frac{\log q}{\log p}-1\right) , 0 \right]$ 
is dominated by the term corresponding to $J_{v,0}$ and is given by
\begin{eqnarray*}
&& e^{-1} (q/p)^{-\delta J_{v,0}} e^{p(p/q)^\delta + O(q (p/q)^{2\delta})} (q^{1/2} u)^{-v} p^{J_{v,0}^2/2} q^{J_{v,0}} \\
&& \sim e^{-1} (q^{1/2} u)^{-v} q^{-v^2+v} e^{(p/q)^\delta (1+o(1))} e^{(v(1-\delta)-v^2/2 \frac{\log^2 q}{\log(1/p)}} q^{\delta(2-3v)}
\end{eqnarray*}
For example, if $\delta = 0$ then we obtain (asymptotically)
\[
\sim e^{-1/2} (q^{1/2} u)^{-v} q^{-v^2+v} e^{(v-v^2/2) \frac{\log^2 q}{\log(1/p)}}.
\]

Next we consider 
\[
J\in \left[ - (v+1)\left( \frac{\log q}{\log p}-1\right) , (v+1)\left( \frac{\log q}{\log p}-1\right) \right) \cap \mathbb{Z}.
\]
In this range the term $p^{J^2/2}q^J$ attains its maximum if $J$ is close to $J_0 := -\frac{\log q}{\log p}$ and it will turn
out to be the essential range in this case. However, in order to be precise we set $J = \kappa J_0$ with $\kappa \in (v,v+1)$.
In this case the sum over $R$ is dominated by the term related to $R = -1$:
\[
\left( (q/p)^{L+\kappa J_0}u\right)^{-1-v+\kappa} e^{-(q/p)^{-1-v+\kappa}} (1+o(1))
\]
Thus, we are led to analyze the sum
\begin{eqnarray*}
&&\sum_{L=0}^{-\kappa J_0} \xi_{L+1} p^L (q/p)^{(-1-v+\kappa)(L+\kappa J_0)} = \\
&& [z^{-\kappa J_0}] X(z) \frac 1{1- (q/p)^{1+v-\kappa} z} = (q/p)^{(-1-v+\kappa)\kappa J_0} e^{q^{-1-v+\kappa}/2 + O(q^{1-2(1+v-\kappa)})}
\end{eqnarray*}
and consequently the term
\begin{eqnarray*}
&& p^{\kappa J_0(\kappa J_0+1)/2}q^{\kappa J_0} u^{\kappa J_0 \frac{\log p}{\log q}} (q/p)^{(-1-v+\kappa)\kappa J_0} 
u^{-1-v+\kappa} e^{q^{-1-v+\kappa}/2} \\
&& \sim e^{(\kappa - \kappa^2/2 - v) \frac{\log^2 q}{\log(1/p)} }  q^{-\kappa(\frac 32 + v - \kappa)} u^{-1-v} e^{q^{-1-v+\kappa}/2}
\end{eqnarray*}
which gets maximal for $\kappa = 1$ and has a local behavior of the form
\[
e^{(\frac 12 - v) \frac{\log^2 q}{\log(1/p)} }  q^{-\frac 12 -v} u^{-1-v} e^{q^{-v}/2}
e^{- \frac {\log(1/p)}2 (J - J_0)^2 }.
\]
Thus, summing over $J$ in this range we finally get
\[
\frac 1{\sqrt{2\pi \log(1/p)}} e^{(\frac 12 - v) \frac{\log^2 q}{\log(1/p)} }  q^{-\frac 12 -v} u^{-1-v} e^{q^{-v}/2}
\]

It is an easy exercise to show that the contributions coming from $J < (v+1)J_0$ are negligible compared to these terms.
Hence, we obtain for $v> 0$ as $p\to 1$
\begin{eqnarray*}
C_{30}(p,u,v) & \sim & e^{-1} (q^{1/2} u)^{-v} q^{-v^2+v} e^{(p/q)^\delta (1+o(1))} e^{(v(1-\delta)-v^2/2) \frac{\log^2 q}{\log(1/p)}} q^{\delta(2-3v)} \\
&~~+&  \frac 1{\sqrt{2\pi}} e^{(\frac 12 - v) \frac{\log^2 q}{\log(1/p)} }  q^{-1 -v} u^{-1-v} e^{q^{-v}/2}.
\end{eqnarray*}

Finally the case  $v= 0$ (or if $v$ is close to zero) can be handled in the same way, however, only the second term survives and so we get
\[
C_{30}(p,u,v) \le  \frac {1+o(1)}{\sqrt{2\pi}} e^{(\frac 12 - v) \frac{\log^2 q}{\log(1/p)} }  q^{-1 -v} u^{-1-v} e^{q^{-v}/2}.
\]

\subsubsection{The term $T_{32}$.}

The analysis of $C_{32}(p,u.v)$ is relatively easy. 
If $R - v -J\frac{\log(1/p)}{\log(p/q)}>0$ then
\[
(q/p)^{R - v -J\frac{\log(1/p)}{\log(p/q)}} \le 1
\] 
which implies that
\[
 e^{-(q/p)^{R - v -J\frac{\log(1/p)}{\log(p/q)}}} 
- \sum_{\ell=0}^{-J-L} \frac{(-1)^\ell}{\ell!} (q/p)^{(R - v -J\frac{\log(1/p)}{\log(p/q)})\ell} 
= O\left( \frac{(q/p)^{(R - v -J\frac{\log(1/p)}{\log(p/q)})(-J-L+1)} }{(-J-L+1)! }  \right).
\]
We first suppose that $v> 0$ and concentrate on $J\in (vJ_0,0]$. Then the relevant term in 
the sum over $R$ is the one with $R = 1$. Thus, the sum over $R$ is dominated (up to a constant) by
\[
\left( (q/p)^{(L+J)} u \right)^{1 - v -J\frac{\log(1/p)}{\log(p/q)}} 
\frac{(q/p)^{(R - v -J\frac{\log(1/p)}{\log(p/q)})(-J-L+1)} }{(-J-L+1)! } 
= \frac{((q/p)u)^{1 - v -J\frac{\log(1/p)}{\log(p/q)}} }{(-J-L+1)! }.
\]
Next, by using the upper bound $\xi_{L+1} \le C/L!$ it follows that 
\[
\sum_{L=0}^{-J} \xi_{L+1}p^L \frac{1}{(-J-L+1)! } 
\le \frac {2^{-J}}{(-J+1)!}.
\]
Thus, we are led to consider the sum
\[
\sum_{J\in (vJ_0,0]} p^{J(J+1)/2} q^J u^{J \frac{\log(1/p)}{\log(p/q)}} \frac {2^{-J}}{(-J+1)!}
((q/p)u)^{1 - v -J\frac{\log(1/p)}{\log(p/q)}}.
\]
As in the case of the analysis of $C_2(p,u,v)$ is follows that this sum is upper bounded
by $e^{O(1/q)}$ since the most significant term appears for $J \sim A/\log p$ (for some constant $A> 0$).
Finally it is an easy exercise to show that the sum over $J \le J_0 v$ is negligible.

If $v= 0$ then a similar analysis applies and we (certainly) have an upper bound of the form $e^{O(1/q)}$.

\subsubsection{The term $T_{31}$.}

We set
\[
I_0:= \left[ - v\left( \frac{\log q}{\log p}-1\right) , 0 \right) \cap \mathbb{Z}
\]
and for $M\ge 1$
\[
I_M:= \left[ - (v+M)\left(\frac{\log q}{\log p}-1\right) , - (v+M-1)\left(\frac{\log q}{\log p}-1\right) \right) \cap \mathbb{Z}
\]

First we consider the term $C_{31,0}(p,u,v)$ given in (\ref{eqC310puv-2}).
If $v> 0$ and $J\in I_0$ then $v + J \frac{\log(1/p)}{\log(p/q)} > 0$ which implies that 
the sum over $R$ is empty. Thus, we can assume that $J < - v\left( \frac{\log q}{\log p}-1\right)$.

By using the upper bound $\xi_{L+1} \le C/L!$ we obtain the upper bound
\[
\left|\sum_{L=0}^{-J} \xi_{L+1}\frac{(-1)^{-J-L}}{(-J-L)!} \right| \le C \frac {2^{-J}}{(-J)!}.
\]
Recall that the term
\[
p^{ J(J+1)/2} q^{J} \frac {2^{-J}}{(-J)!}
\]
is maximal for $J$ close to $J_{\max} = A/\log p$ (for some constant $A>0$) and the maximum value is or order $e^{O(1/q)}$.
Hence, if $v > A/\log(1/q)$ then $J_{\max} \in I_0$ and consequently we can upper bound $C_{31,0}$ trivially by
$e^{O(1/q)}$. Conversely, if $v \le A/\log(1/q)$ then $J_{\max}\in I_1$ and we also get an upper bound of 
the form $e^{O(1/q)}$. Note that the appearing $R$-sums are negligible compared to the leading term $e^{O(1/q)}$.

Finally if $v= 0$, then we have $J_{\max} \in I_1$ and again we get an upper bound of the form $e^{O(1/q)}$.

Next suppose that $K\ge 1$. 
Here we note that for $J\in I_M$ we have, as $p\to 0$, 
\[
\sum_{R\le v+ J \frac{\log(1/p)}{\log(p/q)}} \left( u (q/p)^{-K} \right)^{R-v} \sim 
\left( u (q/p)^{-K} \right)^{-M-v}.
\]
Actually we could also work with an error term. However, in order to make the
following computations more transparent we concentrate on the leading term.
Since 
\[
\sum_{L=0}^{-J-K} \xi_{L+1} p^L \frac{(-1)^{-J-K-L}}{(-J-K-L)!} = [z^{-J-K}] e^{z/2 + O(qz^2) - z}
\]
we get
\begin{eqnarray*}
C_{31,K,M}&:=& \sum_{J\in I_M,\, J \le -K} p^{J(J+1)/2 +JK}q^J  \sum_{R\le v+ J \frac{\log(1/p)}{\log(p/q)} } 
 \left( u \left(\frac qp\right)^{-K} \right)^{R-v} \\
&&\qquad \times\sum_{L=0}^{-J-K} \xi_{L+1}p^{L} \frac{(-1)^{-J-L-K}}{(-J-L-K)!}     \\
&\sim &  \sum_{J\in I_M,\, J \le -K}  p^{J(J+1)/2 +JK}q^J  
 \left( u \left(\frac qp\right)^{-K} \right)^{-M-v} [z^{-J-K}] e^{z/2 + O(qz^2) - z}
\end{eqnarray*}
and consequently if we sum over $K\ge 1$
\begin{eqnarray*}
\sum_{K\ge 1} C_{31,K,M} &\sim &  u^{-M-v}
\sum_{J\in I_M} p^{J(J+1)/2}q^J  \\
&&\qquad \times \sum_{K=1}^{-J} p^{JK}(q/p)^{K(M+v)} [z^{-J-K}] e^{z/2 + O(qz^2) - z} \\
& =  &  u^{-M-v}
\sum_{J\in I_M} p^{J(J+1)/2}q^J  \\
&&\qquad \times      \sum_{K=1}^{-J} p^{JK}(q/p)^{M(1+v)} [z^{-J-K}] e^{z/2 + O(qz^2) - z}.
\end{eqnarray*}
We observe that (for $J\in I_M$) 
\begin{eqnarray*}
 &&\sum_{K=1}^{-J} p^{JK}(q/p)^{K(M+v)} [z^{-J-K}] e^{z/2 + O(qz^2) - z}
 = [z^{-J}] \frac{p^{J}(q/p)^{M+v}z}{1- p^{J}(q/p)^{M+v}z } e^{z/2 + O(qz^2) - z} \\
 &&\qquad \sim   p^{-J^2}(q/p)^{-J(M+v)} e^{z_M/2 + O(q z_M^2) - z_M},
\end{eqnarray*}
where $z_M = p^{-J}(q/p)^{-M-v}$. Note that $z_M$ varies between $1$ and $1/q$ if $J\in I_M$.
However, it will turn out that the asymptotic leading terms will come from $J$ close to 
$- (v+M)\frac{\log q}{\log p}$ which means that $z_M$ asymptotically $1$ and, thus,  the last
exponential term is asymptotically $e^{-1/2}$. The reason is that the term
\[
p^{J(J+1)/2}q^J p^{-J^2}(q/p)^{-J(M+v)} = p^{-J^2/2} q^{J(1-M-v)} p^{J(\frac 12 +M+v)}
\]
has its absolute minimum for $J$ close to $- (v+M-1)\frac{\log q}{\log p}$ and for $J\in I_M$
it gets maximal for $J$ close to $- (v+M)\frac{\log q}{\log p}$, in particular if
\[
J = J_{v,M} := - \left\lfloor (M+v) \left( \frac{\log q}{\log p} -1 \right) \right\rfloor.
\]
Thus, we obtain
\begin{eqnarray*}
\sum_{K\ge 1} C_{31,K,M} &\sim &  e^{-\frac 12} u^{-M-v} p^{-J_{v,M}^2/2} q^{J_{v,M}(1-M-v)} p^{J_{v,M}(\frac 12 +M+v)} \\
&=& e^{ \frac{\log^2 q}q \left( M+v- \frac 12 (M+v)^2 \right) + O(\log^2 q)  }. 
\end{eqnarray*}
Since $(M+v) - \frac 12 (M+v)^2 \le 0$ for $M\ge 2$ (and $0\le v < 1$) it is clear that only the first two
terms corresponding to $M=0$ and $M=1$ are relevant.
Hence, we obtain as a crude estimate
\[
\sum_{K\ge 1} C_{31,K} 
e^{ \frac{\log^2 q}q \left( v- \frac 12 v^2 \right) + O(\log^2 q)  }
+ e^{ \frac{\log^2 q}q \frac 12 \left( 1- v^2  \right) + O(\log^2 q)  }.  
\]
or as a more precise one
\[
\sum_{K\ge 1} C_{31,K} 
\sim 
e^{-\frac 12} (q^{1/2} u)^{-v} q^{-v^2} p^{-J_{v,0}^2/2} q^{J_{v,0}(1-v)} + 
e^{-\frac 12}  q^{-(1+v)(\frac 32 +v) } u^{-1-v} p^{-J_{v,1}^2/2} q^{-J_{v,0}v}
\]
In what follows we will have to study the precise behavior of the first summand. 
We set (as in the analysis of $T_{30}$ 
\[
\delta = - v \left( \frac{\log q}{\log p} -1 \right) - J_{v,0}.
\]
then we have
\[
e^{-\frac 12} (q^{1/2} u)^{-v} q^{-v^2} p^{-J_{v,0}^2/2} q^{J_{v,0}(1-v)} 
\sim e^{-\frac 12} (q^{1/2} u)^{-v} q^{-v^2+v+\delta}e^{(v-v^2/2) \frac{\log^2 q}{\log(1/p)}}.
\]

If $v=0$ or close to $0$ then the second term dominates and we get a lower bound of the form
\[
\sum_{K\ge 1} C_{31,K} 
\ge e^{-\frac 12}  q^{-(1+v)(\frac 32 +v) } u^{-1-v} p^{-J_{v,1}^2/2} q^{-J_{v,0}v}
\]

\subsubsection{The full behavior for $p\to 1$.}

The most significant terms are $T_{30}$ and $T_{31}$ (or $C_{30}$ and $\sum_{K\ge 1} C_{31,K}$). 
So we just have to concentrate on them.
Both consist of two contributions, where the first one is dominating for larger $v$ and
the second one for smaller $v$.

Suppose first that $\frac 12 \le v < 1$. Then the corresponding terms in 
$C_{30}$ and $\sum_{K\ge 1} C_{31,K}$ are
\[
e^{-1} (q^{1/2} u)^{-v} q^{-v^2+v} e^{(p/q)^\delta (1+o(1))} e^{(v(1-\delta)-v^2/2) \frac{\log^2 q}{\log(1/p)}} q^{\delta(2-3v)}
\]
and
\[
e^{-\frac 12} (q^{1/2} u)^{-v} q^{-v^2+v+\delta}e^{(v-v^2/2) \frac{\log^2 q}{\log(1/p)}}
\]
where
\[
\delta = - v \left( \frac{\log q}{\log p} -1 \right) - J_{v,0}.
\]
and
\[
J_{v,0} = - \left\lfloor v \left( \frac{\log q}{\log p} -1 \right) \right\rfloor.
\]
In particular we have $0\le \delta < 1$.

If $\delta > 0$ then it is clear that the term corresponding to $\sum_{K\ge 1} C_{31,K}$
dominates the other one. Thus, 
$T_{31}$ is larger than that of $T_{30}$ which proves positivity in this case.
If $\delta = 0$ then both terms coincide. However, by looking at second order terms
(which we have not worked out here) it again follows that $T_{31}$ dominates.

If $0\le v \le \frac 12$ then we have to look at the terms
\[
\frac 1{\sqrt{2\pi}} e^{(\frac 12 - v) \frac{\log^2 q}{\log(1/p)} }  q^{-1 -v} u^{-1-v} e^{q^{-v}/2}
\]
and
\[
e^{-\frac 12}  q^{-(1+v)(\frac 32 +v) } u^{-1-v} p^{-J_{v,1}^2/2} q^{-J_{v,0}v}
\]
If we replace $J_{v,0}$ by $- \left\lfloor v \left( \frac{\log q}{\log p}-1\right)\right\rfloor$ then 
$p^{-J_{v,1}^2/2} q^{-J_{v,0}v}$ rewrites to $e^{ (1-v^2) \log^2 q/(2 \log(1/p)}$
which is definitely smaller than $e^{(\frac 12 - v) \frac{\log^2 q}{\log(1/p)} }$ if $v> 0$.
If $v=0$ then the subexponential growth of the first term is of order $u^{-1} q^{-1}$ and that of
the second term of order $u^{-1} q^{-3/2}$. Hence again the second term dominates and so 
we get positivity as $p\to 1$ in this case.

\subsection{Proof of Positivity}

In the previous sections we have shown that $\tilde G_k(n) \gg F_0$ 
if $p$ is sufficiently close to $1/2$ or sufficiently close to $1$.
Actually we can do the above analysis even more precisely by giving 
error terms which rigorously proves the lower bound $\tilde G_k(n) \gg F_0$ 
for $0.5 < p \le 0.51$ and for $0.97 \le p < 1$.

Thus it remains to consider the interval $0.51\le p \le 0.97$. 
In this interval we know that
\[
\tilde G_k(n) \sim   C_*(p)\,F_0\, ( - C_1 - C_2 - C_{30} - C_{32} - C_{31,0} + \sum_{K\ge 1} C_{31,K} ),
\]
where the terms $C_1(p,u,v)$, $C_2(p,u,v)$, $C_{30}(p,u,v)$, $C_{32}(p,u,v)$, $C_{31,K}(p,u,v)$ ($K\ge 0$),
in which $u = \overline r_0 / \overline r_1 \in [\sqrt{q/p},\sqrt{p/q}]$ and $v = \langle \overline r_1 \rangle$,
are explicitly given in (\ref{eqC1puv}), (\ref{eqC2puv}), (\ref{eqC30puv}), (\ref{eqC32puv}), (\ref{eqC310puv}), (\ref{eqC31Kpuv}).

In order to make numerical calculations we first replace the infinite sums of 
\[
 - C_1 - C_2 - C_{30} - C_{32} - C_{31,0} + \sum_{K\ge 1} C_{31,K}
\]
by finite sums, that is we just have to consider $|J|\le J_0$, $L\le L_0$, $K\le K_0$, and $R \le R_0$ where 
\[ 
J_0 = 35 , \quad L_0 = 70, \quad K_0 = 80, \quad R_0 = 95.
\] 
The error that we make can be uniformly bounded by $10^{-5}$.

All the terms that appear in the remaining finite sum are continuous in the parameters $p$, $u$, $v$; however there is
a discontinuity in the terms that appear if $R- v- J \frac{\log(1/p)}{\log(p/q)} = 0$. These are only finitely many such cases for $R$.

Hence, in order to show positivity of $- C_1 - C_2 - C_{30} - C_{32} - C_{31,0} + \sum_{K\ge 1} C_{31,K} := C(p, u, v)$ it is 
sufficient to check it in a sufficiently fine three-dimensional grid and a proper analysis of the partial derivatives
and by considering discontinuities if $R- v- J \frac{\log(1/p)}{\log(p/q)} = 0$.

Below we give a table of partial derivatives with respect to $p, u, v$.

\begin{center}
\begin{tabular}{|l|l|l|l|l|l|l|} \hline
$p$ & $u$ & $v$ & $\frac{C(p,u,v)}{\| \nabla C(p,u,v) \|_1 }$ & $\frac{\partial C}{\partial p}$ & $\frac{\partial C}{\partial u}$ & $\frac{\partial C}{\partial v}$  \\ \hline
$0.60$ & $0.8573214$ & $0.200$ & $0.0397029$ & $-37.3216755$ & $-6.87205829924586$ & $-1.70174985214544 \times 10^{-9}$ \\ 
$0.60$ & $0.8573214$ & $0.400$ & $0.0397066$ & $-37.3175544$ & $-6.87205829422766$ & $8.17124146124115 \times 10^{-9}$ \\ 
$0.60$ & $0.8573214$ & $0.600$ & $0.0397105$ & $-37.3133045$ & $-6.87205830278614$ & $6.75015598972095 \times 10^{-9}$ \\ 
$0.60$ & $0.9573214$ & $0.200$ & $0.0477100$ & $-21.9600978$ & $-3.88671232717819$ & $1.68487446217114 \times 10^{-9}$ \\ 
$0.60$ & $0.9573214$ & $0.400$ & $0.0477178$ & $-21.9558812$ & $-3.88671232690196$ & $6.52766729558607 \times 10^{-9}$ \\ 
$0.60$ & $0.9573214$ & $0.600$ & $0.0477256$ & $-21.9516277$ & $-3.88671233292026$ & $2.35500507983488 \times 10^{-9}$ \\ 
$0.60$ & $1.0573214$ & $0.200$ & $0.0556009$ & $-14.2980109$ & $-2.37660414946284$ & $3.01625391330163 \times 10^{-9}$ \\ 
$0.60$ & $1.0573214$ & $0.400$ & $0.0556152$ & $-14.2937099$ & $-2.37660415074092$ & $4.62252458532930 \times 10^{-9}$ \\ 
$0.60$ & $1.0573214$ & $0.600$ & $0.0556294$ & $-14.2894568$ & $-2.37660415496421$ & $-1.60316204755873 \times 10^{-10}$ \\ 
$0.60$ & $1.1573214$ & $0.200$ & $0.0626269$ & $-10.1863196$ & $-1.54288334226127$ & $3.33222338610994 \times 10^{-9}$ \\ 
$0.60$ & $1.1573214$ & $0.400$ & $0.0626503$ & $-10.1819434$ & $-1.54288334394570$ & $2.95541369155217 \times 10^{-9}$ \\ 
$0.60$ & $1.1573214$ & $0.600$ & $0.0626730$ & $-10.1776934$ & $-1.54288334671548$ & $-1.49635859258979 \times 10^{-9}$ \\ 
$0.70$ & $0.7419408$ & $0.200$ & $0.0466821$ & $-7.02015816$ & $-0.941410951563526$ & $0.00277949304106073$ \\ 
$0.70$ & $0.7419408$ & $0.400$ & $0.0468213$ & $-7.00927750$ & $-0.941036859551048$ & $0.00326076664425301$ \\ 
$0.70$ & $0.7419408$ & $0.600$ & $0.0469950$ & $-6.99412985$ & $-0.941080960885188$ & $0.00352113957369227$ \\ 
$0.70$ & $0.8419408$ & $0.200$ & $0.0492989$ & $-5.33811883$ & $-0.631417261109490$ & $0.00300199019243053$ \\ 
$0.70$ & $0.8419408$ & $0.400$ & $0.0495040$ & $-5.32611794$ & $-0.631168855463216$ & $0.00332417515469530$ \\ 
$0.70$ & $0.8419408$ & $0.600$ & $0.0497253$ & $-5.31304507$ & $-0.631258543609903$ & $0.00339509555136175$ \\ 
$0.70$ & $0.9419408$ & $0.200$ & $0.0514611$ & $-4.23520180$ & $-0.447473694530132$ & $0.00317392708737430$ \\ 
$0.70$ & $0.9419408$ & $0.400$ & $0.0517361$ & $-4.22295039$ & $-0.447305509108986$ & $0.00334798714618501$ \\ 
$0.70$ & $0.9419408$ & $0.600$ & $0.0520044$ & $-4.21164153$ & $-0.447402921736284$ & $0.00328784937675408$ \\ 
$0.70$ & $1.0419408$ & $0.200$ & $0.0532287$ & $-3.47206308$ & $-0.330624920881206$ & $0.00330789550107013$ \\ 
$0.70$ & $1.0419408$ & $0.400$ & $0.0535756$ & $-3.45998730$ & $-0.330507053556417$ & $0.00335245633964476$ \\ 
$0.70$ & $1.0419408$ & $0.600$ & $0.0538907$ & $-3.45007283$ & $-0.330597031821256$ & $0.00320084691018963$ \\ 
$0.70$ & $1.1419408$ & $0.200$ & $0.0546466$ & $-2.92151577$ & $-0.252543040933695$ & $0.00341430828054712$ \\ 
$0.70$ & $1.1419408$ & $0.400$ & $0.0550660$ & $-2.90980583$ & $-0.252456509284293$ & $0.00334836078108580$ \\ 
$0.70$ & $1.1419408$ & $0.600$ & $0.0554287$ & $-2.90095183$ & $-0.252534083156064$ & $0.00313143305508135$ \\ 
$0.70$ & $1.2419408$ & $0.200$ & $0.0557535$ & $-2.51064207$ & $-0.198272264594124$ & $0.00350094009471391$ \\ 
$0.70$ & $1.2419408$ & $0.400$ & $0.0562453$ & $-2.49936355$ & $-0.198205173826516$ & $0.00334130490875495$ \\ 
$0.70$ & $1.2419408$ & $0.600$ & $0.0566567$ & $-2.49129520$ & $-0.198269800136153$ & $0.00307611303140831$ \\ 
$0.70$ & $1.3419408$ & $0.200$ & $0.0565844$ & $-2.19510393$ & $-0.159366055272336$ & $0.00357337397538515$ \\ 
$0.70$ & $1.3419408$ & $0.400$ & $0.0571484$ & $-2.18425793$ & $-0.159310991691086$ & $0.00333409085406799$ \\ 
$0.70$ & $1.3419408$ & $0.600$ & $0.0576098$ & $-2.17675830$ & $-0.159363902270115$ & $0.00303165893500434$ \\ 
$0.70$ & $1.4419408$ & $0.200$ & $0.0571731$ & $-1.94654812$ & $-0.130808089307877$ & $0.00363555773552626$ \\ 
$0.70$ & $1.4419408$ & $0.400$ & $0.0578086$ & $-1.93610396$ & $-0.130760449668088$ & $0.00332801239988356$ \\ 
$0.70$ & $1.4419408$ & $0.600$ & $0.0583219$ & $-1.92900552$ & $-0.130803443699534$ & $0.00299540897730211$ \\ \hline
\end{tabular}
\end{center}

\begin{center}
\begin{tabular}{|l|l|l|l|l|l|l|} \hline
$p$ & $u$ & $v$ & $\frac{C(p,u,v)}{\| \nabla C(p,u,v) \|_1 }$ & $\frac{\partial C}{\partial p}$ & $\frac{\partial C}{\partial u}$ & $\frac{\partial C}{\partial v}$  \\ \hline
$0.80$ & $0.65$ & $0.200$ & $0.0248967$ & $-1.46956589776437$ & $-0.119459648828979$ & $0.0600241100414678$ \\ 
$0.80$ & $0.65$ & $0.400$ & $0.0290076$ & $-1.49636288594479$ & $-0.140905496493815$ & $-0.00268991132656993$ \\
$0.80$ & $0.65$ & $0.600$ & $0.0231229$ & $-1.58402037698124$ & $-0.134370873496437$ & $-0.0531169107773621$ \\ 
$0.80$ & $0.75$ & $0.200$ & $0.0237794$ & $-1.18806547561690$ & $-0.0793871424775716$ & $0.0485832481444959$ \\ 
$0.80$ & $0.75$ & $0.400$ & $0.0276804$ & $-1.20264660480984$ & $-0.0935985673038431$ & $-0.00629235525195782$ \\
$0.80$ & $0.75$ & $0.600$ & $0.0215052$ & $-1.26861837550507$ & $-0.0871968711919635$ & $-0.0437779185560316$ \\
$0.80$ & $0.85$ & $0.200$ & $0.0225818$ & $-0.992277554530574$ & $-0.0563859753697216$ & $0.0402965617070095$ \\ 
$0.80$ & $0.85$ & $0.400$ & $0.0262502$ & $-0.998636356683846$ & $-0.0661789200080420$ & $-0.00814619885147749$ \\ 
$0.80$ & $0.85$ & $0.600$ & $0.0198854$ & $-1.05095720444126$ & $-0.0603108685055531$ & $-0.0365389341325795$ \\ 
$0.80$ & $0.95$ & $0.200$ & $0.0213276$ & $-0.848335491866692$ & $-0.0420003183307927$ & $0.0341005124937510$ \\ 
$0.80$ & $0.95$ & $0.400$ & $0.0247639$ & $-0.849069631954080$ & $-0.0489517334756329$ & $-0.00902477816566716$ \\ 
$0.80$ & $0.95$ & $0.600$ & $0.0182823$ & $-0.892403384526119$ & $-0.0437039697658292$ & $-0.0308048695885077$ \\ 
$0.80$ & $1.05$ & $0.200$ & $0.0200260$ & $-0.738163948966530$ & $-0.0324361927255268$ & $0.0293485365574497$ \\ 
$0.80$ & $1.05$ & $0.400$ & $0.0232414$ & $-0.735010372125089$ & $-0.0374844296402443$ & $-0.00933903507416289$ \\ 
$0.80$ & $1.05$ & $0.600$ & $0.0166997$ & $-0.772209946390490$ & $-0.0328328708576464$ & $-0.0261777524244167$ \\ 
$0.80$ & $1.15$ & $0.200$ & $0.0186804$ & $-0.651184626946133$ & $-0.0257793005573603$ & $0.0256278809160904$ \\ 
$0.80$ & $1.15$ & $0.400$ & $0.0216908$ & $-0.645332988952418$ & $-0.0295103956773346$ & $-0.00931950613392019$ \\ 
$0.80$ & $1.15$ & $0.600$ & $0.0151354$ & $-0.678217958622440$ & $-0.0253954332265494$ & $-0.0223840630155792$ \\
$0.80$ & $1.25$ & $0.200$ & $0.0172908$ & $-0.580782272038505$ & $-0.0209780958471129$ & $0.0226643319933828$ \\ 
$0.80$ & $1.25$ & $0.400$ & $0.0201143$ & $-0.573064137057600$ & $-0.0237727883387606$ & $-0.00910101858409007$ \\ 
$0.80$ & $1.25$ & $0.600$ & $0.0135846$ & $-0.602835170141702$ & $-0.0201279084848238$ & $-0.0192308538302655$ \\ 
$0.80$ & $1.35$ & $0.200$ & $0.0158557$ & $-0.522603802636468$ & $-0.0174163026258611$ & $0.0202695748470205$ \\ 
$0.80$ & $1.35$ & $0.400$ & $0.0185105$ & $-0.513610382057550$ & $-0.0195302017260701$ & $-0.00876498700819184$ \\ 
$0.80$ & $1.35$ & $0.600$ & $0.0120405$ & $-0.541082442332197$ & $-0.0162926045703671$ & $-0.0165787548596086$ \\ 
$0.80$ & $1.45$ & $0.200$ & $0.0143716$ & $-0.473664745896940$ & $-0.0147138076727060$ & $0.0183106129867383$ \\ 
$0.80$ & $1.45$ & $0.400$ & $0.0168756$ & $-0.463822268983449$ & $-0.0163233325025658$ & $-0.00836177299845531$ \\ 
$0.80$ & $1.45$ & $0.600$ & $0.0104955$ & $-0.489561844233322$ & $-0.0134373556761602$ & $-0.0143249441002524$ \\ 
$0.80$ & $1.55$ & $0.200$ & $0.0128332$ & $-0.431847222870374$ & $-0.0126263003039639$ & $0.0166912413419595$ \\ 
$0.80$ & $1.55$ & $0.400$ & $0.0152035$ & $-0.421466433564888$ & $-0.0138561256477487$ & $-0.00792306972385859$ \\ 
$0.80$ & $1.55$ & $0.600$ & $0.0089402$ & $-0.445875381153371$ & $-0.0112737912587590$ & $-0.0123921213806000$ \\ 
$0.80$ & $1.65$ & $0.200$ & $0.0112335$ & $-0.395602281798801$ & $-0.0109913985113508$ & $0.0153404116502998$ \\ 
$0.80$ & $1.65$ & $0.400$ & $0.0134857$ & $-0.384911693686263$ & $-0.0119314480429011$ & $-0.00746903628368045$ \\ 
$0.80$ & $1.65$ & $0.600$ & $0.0073638$ & $-0.408280137456529$ & $-0.00961191538806361$ & $-0.0107211906978932$ \\ 
$0.80$ & $1.75$ & $0.200$ & $0.0095627$ & $-0.363765195132260$ & $-0.00969821012120065$ & $0.0142046951836505$ \\ 
$0.80$ & $1.75$ & $0.400$ & $0.0117111$ & $-0.352933892131091$ & $-0.0104145054677929$ & $-0.00701254332113876$ \\ 
$0.80$ & $1.75$ & $0.600$ & $0.0057536$ & $-0.375474526805419$ & $-0.00832317642363023$ & $-0.00926629099495813$ \\ 
$0.80$ & $1.85$ & $0.200$ & $0.0078081$ & $-0.335436119925703$ & $-0.00866939342358819$ & $0.0132432673112248$ \\ 
$0.80$ & $1.85$ & $0.400$ & $0.0098652$ & $-0.324589674384868$ & $-0.00921116843244363$ & $-0.00656177159186200$ \\ 
$0.80$ & $1.85$ & $0.600$ & $0.0040938$ & $-0.346460554688122$ & $-0.00731875624637723$ & $-0.00799135079532221$ \\ 
$0.80$ & $1.95$ & $0.200$ & $0.0059525$ & $-0.309899919159307$ & $-0.00785021919114115$ & $0.0124244891566150$ \\ 
$0.80$ & $1.95$ & $0.400$ & $0.0079291$ & $-0.299131685380871$ & $-0.00825469764720310$ & $-0.00612184077297684$ \\ 
$0.80$ & $1.95$ & $0.600$ & $0.0023648$ & $-0.320451686228296$ & $-0.00653637328440482$ & $-0.00686765290325297$ \\ \hline
\end{tabular}
\end{center}

Supposing that on a three-dimensional grid of values, we find that the minimum value for $C(p, u, v)$ is some $\delta > 0$, and that the sum of the partial derivatives is bounded above in absolute value by some $D$.  Then,
by the multivariate form of Taylor's theorem, it is sufficient for the minimum distance between adjacent grid points to
be at most $|\delta / D|$ in order for us to conclude that $ C(p, u, v)$ is positive over its entire domain.
In particular, we observe numerically that $|\delta / D|$, when $\delta/D$ is negative, is bounded below by approximately $0.001$.

The following table gives some sample values:

\begin{center}
\begin{tabular}{|l|l|l|l|} \hline
$p$ & $u$ & $v$ & $C(p, u, v)$ \\
\hline
$0.51$ & $0.980196058819607$ & $0.400$ & $29.9108624383664$ \\ 
$0.53$ & $0.941696582148512$ & $0.400$ & $10.0526228338624$ \\ 
$0.53$ & $0.991696582148512$ & $0.400$ & $5.88642519475933$ \\ 
$0.53$ & $1.04169658214851$ & $0.400$ & $3.93987124883441$ \\ 
$0.57$ & $0.868553950490285$ & $0.400$ & $3.52426761973916$ \\ 
$0.57$ & $0.918553950490285$ & $0.400$ & $2.69816719720437$ \\ 
$0.57$ & $0.968553950490285$ & $0.400$ & $2.13795667104722$ \\ 
$0.57$ & $1.01855395049029$ & $0.400$ & $1.74575802150213$ \\ 
$0.57$ & $1.06855395049029$ & $0.400$ & $1.46363184340248$ \\ 
$0.57$ & $1.11855395049029$ & $0.400$ & $1.25604047995371$ \\
$0.61$ & $0.799590058902111$ & $0.400$ & $1.77277314367910$ \\ 
$0.61$ & $0.849590058902111$ & $0.400$ & $1.46518133627806$ \\ 
$0.61$ & $0.899590058902111$ & $0.400$ & $1.23455159510288$ \\ 
$0.61$ & $0.949590058902111$ & $0.400$ & $1.05777607664529$ \\ 
$0.61$ & $0.999590058902111$ & $0.400$ & $0.919664990997429$ \\
$0.61$ & $1.04959005890211$ & $0.400$ & $0.809941704967424$ \\ 
$0.61$ & $1.09959005890211$ & $0.400$ & $0.721477392051825$ \\ 
$0.61$ & $1.14959005890211$ & $0.400$ & $0.649215937996586$ \\ 
$0.61$ & $1.19959005890211$ & $0.400$ & $0.589499096328785$ \\ 
$0.61$ & $1.24959005890211$ & $0.400$ & $0.539631267061691$ \\ 
$0.65$ & $0.733799385705343$ & $0.400$ & $0.982651282114738$ \\
$0.65$ & $0.783799385705343$ & $0.400$ & $0.839124203522317$ \\
$0.65$ & $0.833799385705343$ & $0.400$ & $0.727100276432193$ \\
$0.65$ & $0.883799385705343$ & $0.400$ & $0.637940561184182$ \\
$0.65$ & $0.933799385705343$ & $0.400$ & $0.565793172609293$ \\
$0.65$ & $0.983799385705343$ & $0.400$ & $0.506567343973066$ \\
$0.65$ & $1.03379938570534$ & $0.400$ & $0.457330788676269$ \\ 
$0.65$ & $1.08379938570534$ & $0.400$ & $0.415936629134123$ \\ 
$0.65$ & $1.13379938570534$ & $0.400$ & $0.380783719541390$ \\ 
$0.65$ & $1.18379938570534$ & $0.400$ & $0.350658093707974$ \\ 
$0.65$ & $1.23379938570534$ & $0.400$ & $0.324625450046327$ \\ 
$0.65$ & $1.28379938570534$ & $0.400$ & $0.301956613678465$ \\ 
$0.65$ & $1.33379938570534$ & $0.400$ & $0.282074753241104$ \\ \hline
\end{tabular}
\end{center}

\begin{center}
\begin{tabular}{|l|l|l|l|} \hline
$p$ & $u$ & $v$ & $C(p, u, v)$ \\
\hline
$0.69$ & $0.670280062599836$ & $0.400$ & $0.550461162927249$ \\
$0.69$ & $0.720280062599836$ & $0.400$ & $0.476560662585850$ \\
$0.69$ & $0.770280062599836$ & $0.400$ & $0.418085884501238$ \\
$0.69$ & $0.820280062599836$ & $0.400$ & $0.370869235479415$ \\
$0.69$ & $0.870280062599836$ & $0.400$ & $0.332104766553185$ \\
$0.69$ & $0.920280062599836$ & $0.400$ & $0.299828522402899$ \\
$0.69$ & $0.970280062599836$ & $0.400$ & $0.272625293598979$ \\
$0.69$ & $1.02028006259984$ & $0.400$ & $0.249451145554923$ \\ 
$0.69$ & $1.07028006259984$ & $0.400$ & $0.229520731342754$ \\ 
$0.69$ & $1.12028006259984$ & $0.400$ & $0.212233117795916$ \\ 
$0.69$ & $1.17028006259984$ & $0.400$ & $0.197121539103930$ \\ 
$0.69$ & $1.22028006259984$ & $0.400$ & $0.183818532413344$ \\ 
$0.69$ & $1.27028006259984$ & $0.400$ & $0.172031239042804$ \\ 
$0.69$ & $1.32028006259984$ & $0.400$ & $0.161523580019521$ \\ 
$0.69$ & $1.37028006259984$ & $0.400$ & $0.152103170273795$ \\ 
$0.69$ & $1.42028006259984$ & $0.400$ & $0.143611550335309$ \\ 
$0.69$ & $1.47028006259984$ & $0.400$ & $0.135916766083619$ \\ 
$0.73$ & $0.608163640559537$ & $0.400$ & $0.293753647044937$ \\
$0.73$ & $0.658163640559537$ & $0.400$ & $0.254521707499460$ \\
$0.73$ & $0.708163640559537$ & $0.400$ & $0.223617820853438$ \\
$0.73$ & $0.758163640559537$ & $0.400$ & $0.198689182699194$ \\
$0.73$ & $0.808163640559537$ & $0.400$ & $0.178200553380755$ \\
$0.73$ & $0.858163640559537$ & $0.400$ & $0.161099962106000$ \\
$0.73$ & $0.908163640559537$ & $0.400$ & $0.146640237980822$ \\
$0.73$ & $0.958163640559537$ & $0.400$ & $0.134275597770952$ \\
$0.73$ & $1.00816364055954$ & $0.400$ & $0.123598129339214$ \\ 
$0.73$ & $1.05816364055954$ & $0.400$ & $0.114297028223827$ \\ 
$0.73$ & $1.10816364055954$ & $0.400$ & $0.106131513996562$ \\ 
$0.73$ & $1.15816364055954$ & $0.400$ & $0.0989123176383000$ \\
$0.73$ & $1.20816364055954$ & $0.400$ & $0.0924887213278005$ \\
$0.73$ & $1.25816364055954$ & $0.400$ & $0.0867392964169298$ \\
$0.73$ & $1.30816364055954$ & $0.400$ & $0.0815651633709091$ \\
$0.73$ & $1.35816364055954$ & $0.400$ & $0.0768850068928071$ \\
$0.73$ & $1.40816364055954$ & $0.400$ & $0.0726313343834271$ \\
$0.73$ & $1.45816364055954$ & $0.400$ & $0.0687476288039068$ \\
$0.73$ & $1.50816364055954$ & $0.400$ & $0.0651861534672111$ \\
$0.73$ & $1.55816364055954$ & $0.400$ & $0.0619062371781993$ \\
$0.73$ & $1.60816364055954$ & $0.400$ & $0.0588729160730193$ \\ \hline
\end{tabular}
\end{center}

\begin{center}
\begin{tabular}{|l|l|l|l|} \hline
$p$ & $u$ & $v$ & $C(p, u, v)$ \\
\hline
$0.77$ & $0.546535725000021$ & $0.400$ & $0.145533658462583$ \\ 
$0.77$ & $0.596535725000021$ & $0.400$ & $0.124327954267287$ \\ 
$0.77$ & $0.646535725000021$ & $0.400$ & $0.107945853943789$ \\ 
$0.77$ & $0.696535725000021$ & $0.400$ & $0.0949108647641168$ \\
$0.77$ & $0.746535725000021$ & $0.400$ & $0.0843029431585052$ \\
$0.77$ & $0.796535725000021$ & $0.400$ & $0.0755136117642792$ \\
$0.77$ & $0.846535725000021$ & $0.400$ & $0.0681223833348810$ \\
$0.77$ & $0.896535725000021$ & $0.400$ & $0.0618286193771169$ \\
$0.77$ & $0.946535725000021$ & $0.400$ & $0.0564114016933814$ \\
$0.77$ & $0.996535725000021$ & $0.400$ & $0.0517046696768304$ \\
$0.77$ & $1.04653572500002$ & $0.400$ & $0.0475811789582821$ \\ 
$0.77$ & $1.09653572500002$ & $0.400$ & $0.0439418020113962$ \\ 
$0.77$ & $1.14653572500002$ & $0.400$ & $0.0407081902919728$ \\ 
$0.77$ & $1.19653572500002$ & $0.400$ & $0.0378176207163818$ \\ 
$0.77$ & $1.24653572500002$ & $0.400$ & $0.0352193008652932$ \\ 
$0.77$ & $1.29653572500002$ & $0.400$ & $0.0328716716127033$ \\ 
$0.77$ & $1.34653572500002$ & $0.400$ & $0.0307404059773191$ \\ 
$0.77$ & $1.39653572500002$ & $0.400$ & $0.0287969028851336$ \\ 
$0.77$ & $1.44653572500002$ & $0.400$ & $0.0270171384844815$ \\ 
$0.77$ & $1.49653572500002$ & $0.400$ & $0.0253807795447578$ \\ 
$0.77$ & $1.54653572500002$ & $0.400$ & $0.0238704914680881$ \\ 
$0.77$ & $1.59653572500002$ & $0.400$ & $0.0224713924978488$ \\ 
$0.77$ & $1.64653572500002$ & $0.400$ & $0.0211706188873642$ \\ 
$0.77$ & $1.69653572500002$ & $0.400$ & $0.0199569750386104$ \\ 
$0.77$ & $1.74653572500002$ & $0.400$ & $0.0188206491934295$ \\ 
$0.77$ & $1.79653572500002$ & $0.400$ & $0.0177529799785283$ \\ 
$0.81$ & $0.484322104837853$ & $0.400$ & $0.0760870936626361$ \\
$0.81$ & $0.534322104837853$ & $0.400$ & $0.0675499639584132$ \\
$0.81$ & $0.584322104837853$ & $0.400$ & $0.0605278017367255$ \\
$0.81$ & $0.634322104837853$ & $0.400$ & $0.0546912979618810$ \\
$0.81$ & $0.684322104837853$ & $0.400$ & $0.0497865208840267$ \\
$0.81$ & $0.734322104837853$ & $0.400$ & $0.0456211260755595$ \\
$0.81$ & $0.784322104837853$ & $0.400$ & $0.0420491692950691$ \\
$0.81$ & $0.834322104837853$ & $0.400$ & $0.0389589632508773$ \\
$0.81$ & $0.884322104837853$ & $0.400$ & $0.0362641022489925$ \\
$0.81$ & $0.934322104837853$ & $0.400$ & $0.0338969726150253$ \\
$0.81$ & $0.984322104837853$ & $0.400$ & $0.0318040708586409$ \\
$0.81$ & $1.03432210483785$ & $0.400$ & $0.0299426063306214$ \\ 
$0.81$ & $1.08432210483785$ & $0.400$ & $0.0282780131573759$ \\ 
$0.81$ & $1.13432210483785$ & $0.400$ & $0.0267821087966240$ \\ 
$0.81$ & $1.18432210483785$ & $0.400$ & $0.0254317162361133$ \\ 
$0.81$ & $1.23432210483785$ & $0.400$ & $0.0242076219697651$ \\ 
$0.81$ & $1.28432210483785$ & $0.400$ & $0.0230937796962394$ \\ 
$0.81$ & $1.33432210483785$ & $0.400$ & $0.0220766957561978$ \\ 
$0.81$ & $1.38432210483785$ & $0.400$ & $0.0211449503674146$ \\ 
$0.81$ & $1.43432210483785$ & $0.400$ & $0.0202888213490553$ \\ 
$0.81$ & $1.48432210483785$ & $0.400$ & $0.0194999859288600$ \\ 
$0.81$ & $1.53432210483785$ & $0.400$ & $0.0187712825802748$ \\  \hline
\end{tabular}
\end{center}

\begin{center}
\begin{tabular}{|l|l|l|l|} \hline
$p$ & $u$ & $v$ & $C(p, u, v)$ \\
\hline
$0.81$ & $1.58432210483785$ & $0.400$ & $0.0180965194068392$ \\ 
$0.81$ & $1.63432210483785$ & $0.400$ & $0.0174703189090337$ \\ 
$0.81$ & $1.68432210483785$ & $0.400$ & $0.0168879914158993$ \\ 
$0.81$ & $1.73432210483785$ & $0.400$ & $0.0163454312572640$ \\ 
$0.81$ & $1.78432210483785$ & $0.400$ & $0.0158390311067080$ \\ 
$0.81$ & $1.83432210483785$ & $0.400$ & $0.0153656109424389$ \\ 
$0.81$ & $1.88432210483785$ & $0.400$ & $0.0149223588338145$ \\ 
$0.81$ & $1.93432210483785$ & $0.400$ & $0.0145067813642754$ \\ 
$0.81$ & $1.98432210483785$ & $0.400$ & $0.0141166619432482$ \\ 
$0.81$ & $2.03432210483785$ & $0.400$ & $0.0137500256186627$ \\ 
$0.85$ & $0.420084025208403$ & $0.400$ & $0.0186691810013144$ \\
$0.85$ & $0.470084025208403$ & $0.400$ & $0.0162646973694791$ \\
$0.85$ & $0.520084025208403$ & $0.400$ & $0.0143518941913499$ \\
$0.85$ & $0.570084025208403$ & $0.400$ & $0.0128045473320526$ \\
$0.85$ & $0.620084025208403$ & $0.400$ & $0.0115330738808552$ \\
$0.85$ & $0.670084025208403$ & $0.400$ & $0.0104734953492880$ \\
$0.85$ & $0.720084025208403$ & $0.400$ & $0.00957942237801035$ \\
$0.85$ & $0.770084025208403$ & $0.400$ & $0.00881663551626843$ \\
$0.85$ & $0.820084025208403$ & $0.400$ & $0.00815946033617365$ \\
$0.85$ & $0.870084025208403$ & $0.400$ & $0.00758832384963171$ \\
$0.85$ & $0.920084025208403$ & $0.400$ & $0.00708808343006240$ \\
$0.85$ & $0.970084025208403$ & $0.400$ & $0.00664686534582870$ \\
$0.85$ & $1.02008402520840$ & $0.400$ & $0.00625524408678757$ \\ 
$0.85$ & $1.07008402520840$ & $0.400$ & $0.00590565337915905$ \\ 
$0.85$ & $1.12008402520840$ & $0.400$ & $0.00559195686946623$ \\ 
$0.85$ & $1.17008402520840$ & $0.400$ & $0.00530913114198484$ \\ 
$0.85$ & $1.22008402520840$ & $0.400$ & $0.00505302824785758$ \\ 
$0.85$ & $1.27008402520840$ & $0.400$ & $0.00482019595983729$ \\ 
$0.85$ & $1.32008402520840$ & $0.400$ & $0.00460774000566744$ \\ 
$0.85$ & $1.37008402520840$ & $0.400$ & $0.00441321739799605$ \\ 
$0.85$ & $1.42008402520840$ & $0.400$ & $0.00423455315103638$ \\ 
$0.85$ & $1.47008402520840$ & $0.400$ & $0.00406997450318158$ \\ 
$0.85$ & $1.52008402520840$ & $0.400$ & $0.00391795871382783$ \\ 
$0.85$ & $1.57008402520840$ & $0.400$ & $0.00377719110929320$ \\ 
$0.85$ & $1.62008402520840$ & $0.400$ & $0.00364653129327053$ \\ 
$0.85$ & $1.67008402520840$ & $0.400$ & $0.00352498564643611$ \\ 
$0.85$ & $1.72008402520840$ & $0.400$ & $0.00341168480917986$ \\ 
$0.85$ & $1.77008402520840$ & $0.400$ & $0.00330586520067300$ \\ 
$0.85$ & $1.82008402520840$ & $0.400$ & $0.00320685363931261$ \\ 
$0.85$ & $1.87008402520840$ & $0.400$ & $0.00311405461434333$ \\ 
$0.85$ & $1.92008402520840$ & $0.400$ & $0.00302693959019962$ \\ 
$0.85$ & $1.97008402520840$ & $0.400$ & $0.00294503804752821$ \\ 
$0.85$ & $2.02008402520840$ & $0.400$ & $0.00286792988845264$ \\ 
$0.85$ & $2.07008402520840$ & $0.400$ & $0.00279523901235734$ \\ 
$0.85$ & $2.12008402520840$ & $0.400$ & $0.00272662785891953$ \\ 
$0.85$ & $2.17008402520840$ & $0.400$ & $0.00266179269510758$ \\ 
$0.85$ & $2.22008402520840$ & $0.400$ & $0.00260045962977529$ \\ 
$0.85$ & $2.27008402520840$ & $0.400$ & $0.00254238115849148$ \\ 
$0.85$ & $2.32008402520840$ & $0.400$ & $0.00248733315356731$ \\ 
$0.85$ & $2.37008402520840$ & $0.400$ & $0.00243511230246440$ \\ \hline
\end{tabular}
\end{center}

\begin{center}
\begin{tabular}{|l|l|l|l|} \hline
$p$ & $u$ & $v$ & $C(p, u, v)$ \\
\hline
$0.89$ & $0.351561524655326$ & $0.400$ & $0.00145292282104492$ \\
$0.89$ & $0.401561524655326$ & $0.400$ & $0.00118577480316162$ \\
$0.89$ & $0.451561524655326$ & $0.400$ & $0.000993013381958008$ \\
$0.89$ & $0.501561524655326$ & $0.400$ & $0.000848412513732910$ \\
$0.89$ & $0.551561524655326$ & $0.400$ & $0.000735878944396973$ \\
$0.89$ & $0.601561524655326$ & $0.400$ & $0.000646829605102539$ \\
$0.89$ & $0.651561524655326$ & $0.400$ & $0.000574707984924316$ \\
$0.89$ & $0.701561524655326$ & $0.400$ & $0.000515460968017578$ \\
$0.89$ & $0.751561524655326$ & $0.400$ & $0.000466108322143555$ \\
$0.89$ & $0.801561524655326$ & $0.400$ & $0.000424087047576904$ \\
$0.89$ & $0.851561524655326$ & $0.400$ & $0.000387966632843018$ \\
$0.89$ & $0.901561524655326$ & $0.400$ & $0.000357031822204590$ \\
$0.89$ & $0.951561524655326$ & $0.400$ & $0.000330328941345215$ \\
$0.89$ & $1.00156152465533$ & $0.400$ & $0.000306487083435059$ \\ 
$0.89$ & $1.05156152465533$ & $0.400$ & $0.000285655260086060$ \\ 
$0.89$ & $1.10156152465533$ & $0.400$ & $0.000267118215560913$ \\ 
$0.89$ & $1.15156152465533$ & $0.400$ & $0.000250488519668579$ \\ 
$0.89$ & $1.20156152465533$ & $0.400$ & $0.000235617160797119$ \\ 
$0.89$ & $1.25156152465533$ & $0.400$ & $0.000222235918045044$ \\ 
$0.89$ & $1.30156152465533$ & $0.400$ & $0.000210016965866089$ \\ 
$0.89$ & $1.35156152465533$ & $0.400$ & $0.000198870897293091$ \\ 
$0.89$ & $1.40156152465533$ & $0.400$ & $0.000188708305358887$ \\ 
$0.89$ & $1.45156152465533$ & $0.400$ & $0.000179469585418701$ \\ 
$0.89$ & $1.50156152465533$ & $0.400$ & $0.000171035528182983$ \\ 
$0.89$ & $1.55156152465533$ & $0.400$ & $0.000163167715072632$ \\ 
$0.89$ & $1.60156152465533$ & $0.400$ & $0.000155717134475708$ \\ 
$0.89$ & $1.65156152465533$ & $0.400$ & $0.000149309635162354$ \\ 
$0.89$ & $1.70156152465533$ & $0.400$ & $0.000143021345138550$ \\ 
$0.89$ & $1.75156152465533$ & $0.400$ & $0.000137194991111755$ \\ 
$0.89$ & $1.80156152465533$ & $0.400$ & $0.000131785869598389$ \\ 
$0.89$ & $1.85156152465533$ & $0.400$ & $0.000126823782920837$ \\ 
$0.89$ & $1.90156152465533$ & $0.400$ & $0.000121921300888062$ \\ 
$0.89$ & $1.95156152465533$ & $0.400$ & $0.000117585062980652$ \\ 
$0.89$ & $2.00156152465533$ & $0.400$ & $0.000113427639007568$ \\ 
$0.89$ & $2.05156152465533$ & $0.400$ & $0.000109493732452393$ \\ 
$0.89$ & $2.10156152465533$ & $0.400$ & $0.000105798244476318$ \\ 
$0.89$ & $2.15156152465533$ & $0.400$ & $0.000102311372756958$ \\ 
$0.89$ & $2.20156152465533$ & $0.400$ & $0.0000989884138107300$ \\
$0.89$ & $2.25156152465533$ & $0.400$ & $0.0000958889722824097$ \\
$0.89$ & $2.30156152465533$ & $0.400$ & $0.0000930279493331909$ \\
$0.89$ & $2.35156152465533$ & $0.400$ & $0.0000902861356735229$ \\
$0.89$ & $2.40156152465533$ & $0.400$ & $0.0000875443220138550$ \\
$0.89$ & $2.45156152465533$ & $0.400$ & $0.0000850409269332886$ \\
$0.89$ & $2.50156152465533$ & $0.400$ & $0.0000825822353363037$ \\
$0.89$ & $2.55156152465533$ & $0.400$ & $0.0000803321599960327$ \\
$0.89$ & $2.60156152465533$ & $0.400$ & $0.0000781416893005371$ \\
$0.89$ & $2.65156152465533$ & $0.400$ & $0.0000760406255722046$ \\
$0.89$ & $2.70156152465532$ & $0.400$ & $0.0000741034746170044$ \\ \hline
\end{tabular}
\end{center}

\begin{center}
\begin{tabular}{|l|l|l|l|} \hline
$p$ & $u$ & $v$ & $C(p, u, v)$ \\
\hline
$0.89$ & $2.75156152465532$ & $0.400$ & $0.0000721588730812073$ \\
$0.89$ & $2.80156152465532$ & $0.400$ & $0.0000703781843185425$ \\
$0.93$ & $0.274351630584367$ & $0.400$ & $5.08388570706080 \times 10^{18}$ \\
$0.93$ & $0.324351630584367$ & $0.400$ & $4.02164584183275 \times 10^{18}$ \\
$0.93$ & $0.374351630584367$ & $0.400$ & $3.29029412228838 \times 10^{18}$ \\
$0.93$ & $0.424351630584367$ & $0.400$ & $2.76064216553067 \times 10^{18}$ \\
$0.93$ & $0.474351630584367$ & $0.400$ & $2.36203204644596 \times 10^{18}$ \\
$0.93$ & $0.524351630584367$ & $0.400$ & $2.05283796276531 \times 10^{18}$ \\
$0.93$ & $0.574351630584367$ & $0.400$ & $1.80707983941192 \times 10^{18}$ \\
$0.93$ & $0.624351630584367$ & $0.400$ & $1.60777544694030 \times 10^{18}$ \\
$0.93$ & $0.674351630584367$ & $0.400$ & $1.44339556221551 \times 10^{18}$ \\
$0.93$ & $0.724351630584367$ & $0.400$ & $1.30586151661967 \times 10^{18}$ \\
$0.93$ & $0.774351630584367$ & $0.400$ & $1.18935865797586 \times 10^{18}$ \\
$0.93$ & $0.824351630584367$ & $0.400$ & $1.08960438065609 \times 10^{18}$ \\
$0.93$ & $0.874351630584367$ & $0.400$ & $1.00338075849318 \times 10^{18}$ \\
$0.93$ & $0.924351630584367$ & $0.400$ & $9.28227105214006 \times 10^{17}$ \\
$0.93$ & $0.974351630584367$ & $0.400$ & $8.62232402787871 \times 10^{17}$ \\
$0.93$ & $1.02435163058437$ & $0.400$ & $8.03891902568485 \times 10^{17}$ \\ 
$0.93$ & $1.07435163058437$ & $0.400$ & $7.52006018232347 \times 10^{17}$ \\ 
$0.93$ & $1.12435163058437$ & $0.400$ & $7.05607724938857 \times 10^{17}$ \\ 
$0.93$ & $1.17435163058437$ & $0.400$ & $6.63909564478263 \times 10^{17}$ \\ 
$0.93$ & $1.22435163058437$ & $0.400$ & $6.26264382665951 \times 10^{17}$ \\ 
$0.93$ & $1.27435163058437$ & $0.400$ & $5.92135844931128 \times 10^{17}$ \\ 
$0.93$ & $1.32435163058437$ & $0.400$ & $5.61076019967838 \times 10^{17}$ \\ 
$0.93$ & $1.37435163058437$ & $0.400$ & $5.32708143146652 \times 10^{17}$ \\ 
$0.93$ & $1.42435163058437$ & $0.400$ & $5.06713224060254 \times 10^{17}$ \\ 
$0.93$ & $1.47435163058437$ & $0.400$ & $4.82819540336550 \times 10^{17}$ \\ 
$0.93$ & $1.52435163058437$ & $0.400$ & $4.60794321949106 \times 10^{17}$ \\ 
$0.93$ & $1.57435163058437$ & $0.400$ & $4.40437114631546 \times 10^{17}$ \\ 
$0.93$ & $1.62435163058437$ & $0.400$ & $4.21574442377283 \times 10^{17}$ \\ 
$0.93$ & $1.67435163058437$ & $0.400$ & $4.04055483735086 \times 10^{17}$ \\ 
$0.93$ & $1.72435163058437$ & $0.400$ & $3.87748545677684 \times 10^{17}$ \\ 
$0.93$ & $1.77435163058437$ & $0.400$ & $3.72538169700794 \times 10^{17}$ \\ 
$0.93$ & $1.82435163058437$ & $0.400$ & $3.58322742657385 \times 10^{17}$ \\ 
$0.93$ & $1.87435163058437$ & $0.400$ & $3.45012513242157 \times 10^{17}$ \\ 
$0.93$ & $1.92435163058437$ & $0.400$ & $3.32527936550119 \times 10^{17}$ \\ 
$0.93$ & $1.97435163058437$ & $0.400$ & $3.20798285548969 \times 10^{17}$ \\ 
$0.93$ & $2.02435163058437$ & $0.400$ & $3.09760480929080 \times 10^{17}$ \\ 
$0.93$ & $2.07435163058437$ & $0.400$ & $2.99358100573060 \times 10^{17}$ \\ 
$0.93$ & $2.12435163058437$ & $0.400$ & $2.89540537512205 \times 10^{17}$ \\ 
$0.93$ & $2.17435163058437$ & $0.400$ & $2.80262281222224 \times 10^{17}$ \\ 
$0.93$ & $2.22435163058437$ & $0.400$ & $2.71482301836819 \times 10^{17}$ \\ 
$0.93$ & $2.27435163058437$ & $0.400$ & $2.63163520611826 \times 10^{17}$ \\ 
$0.93$ & $2.32435163058437$ & $0.400$ & $2.55272352970900 \times 10^{17}$ \\ 
$0.93$ & $2.37435163058437$ & $0.400$ & $2.47778312871162 \times 10^{17}$ \\ 
$0.93$ & $2.42435163058437$ & $0.400$ & $2.40653669169914 \times 10^{17}$ \\ 
$0.93$ & $2.47435163058437$ & $0.400$ & $2.33873146248732 \times 10^{17}$ \\  \hline
\end{tabular}
\end{center}

\begin{center}
\begin{tabular}{|l|l|l|l|} \hline
$p$ & $u$ & $v$ & $C(p, u, v)$ \\
\hline
$0.93$ & $2.52435163058437$ & $0.400$ & $2.27413662434387 \times 10^{17}$ \\ 
$0.93$ & $2.57435163058437$ & $0.400$ & $2.21254100805868 \times 10^{17}$ \\ 
$0.93$ & $2.62435163058437$ & $0.400$ & $2.15375107839294 \times 10^{17}$ \\ 
$0.93$ & $2.67435163058437$ & $0.400$ & $2.09758916054065 \times 10^{17}$ \\ 
$0.93$ & $2.72435163058437$ & $0.400$ & $2.04389187412840 \times 10^{17}$ \\ 
$0.93$ & $2.77435163058437$ & $0.400$ & $1.99250874717724 \times 10^{17}$ \\ 
$0.93$ & $2.82435163058437$ & $0.400$ & $1.94330098653627 \times 10^{17}$ \\ 
$0.93$ & $2.87435163058437$ & $0.400$ & $1.89614038471800 \times 10^{17}$ \\ 
$0.97$ & $0.50$ & $0.60$ & $9.17733198126610 \times 10^{72}$ \\
$0.97$ & $1.00$ & $0.60$ & $6.05478107453485 \times 10^{72}$ \\ 
$0.97$ & $3.00$ & $0.60$ & $3.13202840384780 \times 10^{72}$ \\
$0.97$ & $5.00$ & $0.60$ & $2.30524156812013 \times 10^{72}$ \\
\hline
\end{tabular}
\end{center}

\bigskip

This finally proves the lower bound $\tilde G_k(n) \gg F_0$  for every $p\in (\frac 12, 1)$.

\label{lastpage}

\end{document}